\documentclass[12pt,a4paper]{article}

\usepackage[left=2cm,right=2cm, bottom = 4cm, top = 4cm]{geometry}
\usepackage{amsfonts,graphicx,amsmath,amssymb,amsthm,bbm,nicefrac,enumerate,comment,mathrsfs, mathtools, todonotes,textcomp}

\usepackage[multiple]{footmisc} 
\usepackage[hyperfootnotes=false]{hyperref}

\hypersetup{colorlinks=true}

\usepackage{cite}
\usepackage[sort, capitalise]{cleveref} %

\crefname{subsection}{Subsection}{Subsections}
\crefname{enumi}{item}{items}
\crefname{equation}{}{}

\setcounter{secnumdepth}{4}
\setcounter{tocdepth}{4}

\theoremstyle{plain}
\newtheorem{theorem}{Theorem}[section]
\newtheorem{lemma}[theorem]{Lemma}
\newtheorem{prop}[theorem]{Proposition}
\newtheorem{cor}[theorem]{Corollary}

\newtheorem{definition}[theorem]{Definition}

\theoremstyle{remark}

\newcommand{\R}{\mathbb{R}}
\newcommand{\N}{\mathbb{N}}

\newcommand{\norm}[1]{\cfadd{euclidean_norm_def} \left\| #1 \right\| }
\newcommand{\Norm}[1]{\cfadd{euclidean_norm_def} \| #1 \| }
\newcommand{\Normm}[1]{\cfadd{euclidean_norm_def} \big\| #1 \big\| }

\newcommand{\normmm}[1]{{\left\vert\kern-0.25ex\left\vert\kern-0.25ex\left\vert #1 
    \right\vert\kern-0.25ex\right\vert\kern-0.25ex\right\vert}} %
\DeclarePairedDelimiter{\newnorm}{\lVert}{\rVert}

\newcommand{\qandq}{\qquad\text{and}\qquad}
\newcommand{\andq}{\text{and}\qquad}

\newcommand{\id}{\operatorname{id}}

\newcommand{\domain}{\operatorname{Domain}}
\newcommand{\codomain}{\operatorname{Codomain}}
\newcommand{\param}{\mathcal{P}}

\newcommand{\realisation}{\cfadd{Def:ANNrealization}\mathcal{R}}

\newcommand{\Flow}{\cfadd{Def:flow_operator} \mathbf{P}}

\newcommand{\inputStruc}{\mathfrak{I}}
\newcommand{\outputStruc}{\mathfrak{O}}
\newcommand{\smallprod}[3]{{\textstyle \prod_{#1 = #2}^{#3}}}
\newcommand{\mycup}{{\textstyle \bigcup}}

\newcommand{\ANNs}{\cfadd{Def:ANN_intro}\mathscr{N}}
\newcommand{\activation}{a}

\newcommand{\functionANN}{\cfadd{Def:ANNrealization}\mathcal{R}_{\activation}}
\newcommand{\paramANN}{\cfadd{Def:ANN_intro}\mathcal{P}}

\newcommand{\lengthANN}{\cfadd{Def:ANN2}\mathcal{L}}
\newcommand{\hidLengthAnn}{\cfadd{Def:ANN2}\mathcal{H}}
\newcommand{\inDimANN}{\cfadd{Def:ANN2}\mathcal{I}}
\newcommand{\compANN}[2]{\cfadd{Definition:ANNcomposition}{#1 \bullet #2}}
\newcommand{\compANNbullet}{\cfadd{Definition:ANNcomposition}\bullet}

\newcommand{\outDimANN}{\cfadd{Def:ANN2}\mathcal{O}}

\newcommand{\dims}{\mathcal{D}}
\newcommand{\hiddenDimId}{\mathfrak{i}}

\newcommand{\ANNf}{\mathscr{f}}
\newcommand{\ANNg}{\mathscr{g}}
\newcommand{\ANNh}{\mathscr{h}}
\newcommand{\ANNi}{\mathscr{i}}
\newcommand{\ANNu}{\mathscr{u}}

\makeatletter
\newcommand{\vast}{\bBigg@{3.5}}
\newcommand{\Vast}{\bBigg@{4}}
\makeatother

\makeatletter
\DeclareFontEncoding{LS1}{}{}
\DeclareFontSubstitution{LS1}{stix}{m}{n}
\DeclareMathAlphabet{\mathscr}{LS1}{stixscr}{m}{n}
\makeatother

\usepackage{stackengine,xcolor}
\input pdf-trans
\newbox\qbox
\def\usecolor#1{\csname\string\color@#1\endcsname\space}
\newcommand\bordercolor[1]{\colsplit{1}{#1}}
\newcommand\fillcolor[1]{\colsplit{0}{#1}}
\newcommand\colsplit[2]{\colorlet{tmpcolor}{#2}\edef\tmp{\usecolor{tmpcolor}}%
	\def\tmpB{}\expandafter\colsplithelp\tmp\relax%
	\ifnum0=#1\relax\edef\fillcol{\tmpB}\else\edef\bordercol{\tmpC}\fi}
\def\colsplithelp#1#2 #3\relax{%
	\edef\tmpB{\tmpB#1#2 }%
	\ifnum `#1>`9\relax\def\tmpC{#3}\else\colsplithelp#3\relax\fi
}
\newcommand\outline[1]{\leavevmode%
	\def\maltext{#1}%
	\setbox\qbox=\hbox{\maltext}%
	\boxgs{Q q 2 Tr \thickness\space w \fillcol\space \bordercol\space}{}%
	\copy\qbox%
}
\newcommand\mathcalbb[2][1]{%
	\stackengine{0pt}{\outline{$\mathcal{#2}$}}{\kern.3pt\outline{$\mathcal{#2}$}}{O}{l}{F}{F}{L}}
\bordercolor{black}
\fillcolor{white}
\def\thickness{.1} %

\usepackage{xparse}
\usepackage{etoolbox}

\ExplSyntaxOn

\NewDocumentCommand{\enum}{ O{;} m o }
 {
  \my_enum:nnn { #1 } { #2 } { #3 }
 }

\seq_new:N \l__my_enum_seq
\tl_new:N \l__my_enum_item_tl
\prop_const_from_keyval:Nn \l__verbs
{
show = shows ,
imply = implies , 
demonstrate = demonstrates ,
prove = proves ,
establish = establishes ,
ensure = ensures ,
assure = assures
}
\int_new:N \l__number_of_args

\cs_new_protected:Nn \my_enum:nnn
 {
  \seq_set_split:Nnn \l__my_enum_seq { #1 } { #2 }
  \seq_remove_all:Nn \l__my_enum_seq {}
  \int_set_eq:NN \l__number_of_args { \seq_count:N \l__my_enum_seq }
  \seq_use:Nnnn \l__my_enum_seq { ~and~ } { ,~ } { ,~and~ }
  \IfNoValueTF{#3}{}{
    \space
    \int_compare:nNnTF{ \l__number_of_args } < {2}{ \prop_item:Nn \l__verbs {#3} }{ #3 }
  }
 }

\seq_new:N \g_cflist_loaded
\seq_new:N \g_cflist_pending

\NewDocumentCommand{\cfadd}{ m }
{
  \seq_if_in:NnF \g_cflist_loaded { #1 } {
    \seq_if_in:NnF \g_cflist_pending { #1 } {
      \seq_gput_right:Nn \g_cflist_pending { #1 }
    }
  }
}

\NewDocumentCommand{\cfload}{ o }
{
  \seq_if_empty:NTF \g_cflist_pending {\unskip} {
    (cf.\ \cref{\seq_use:Nn \g_cflist_pending {,}})\IfValueTF{#1}{#1~}{\unskip}
    \seq_gconcat:NNN \g_cflist_loaded \g_cflist_loaded \g_cflist_pending
    \seq_gclear:N \g_cflist_pending
  }
}

\NewDocumentCommand{\cfclear} {} {
  \seq_gclear:N \g_cflist_loaded
  \seq_gclear:N \g_cflist_pending
}

\NewDocumentCommand{\cfout}{ o }
{
  \seq_if_empty:NTF \g_cflist_pending {\unskip} {
    (cf.\ \cref{\seq_use:Nn \g_cflist_pending {,}})\IfValueTF{#1}{#1~}{\unskip}
    \seq_gclear:N \g_cflist_pending
  }
}

\NewDocumentCommand{\ifnocf} { m } {
  \seq_if_empty:NT \g_cflist_pending { #1 }
}

\ExplSyntaxOff

\begin{document}

\title{High-dimensional approximation spaces \\of  artificial neural networks and \\applications to partial differential equations}

\author{
	Pierfrancesco Beneventano$^{1,2}$,
	Patrick Cheridito$^3$, \\
	Arnulf Jentzen$^{4,5,6}$, and
	Philippe von Wurstemberger$^{7,8}$
	\bigskip
	\\
	\small{$^1$ Department of Mathematics, ETH Zurich,}\\ 	
	\small{Switzerland; e-mail: \texttt{pierbene96}\textcircled{\texttt{a}}\texttt{gmail.com}}
	\smallskip\\
	\small{$^2$ Department of Operations Research and Financial Engineering, }\\ 	
	\small{Princeton University, United States; e-mail: \texttt{pierb}\textcircled{\texttt{a}}\texttt{princeton.edu}}
	\smallskip\\
	\small{$^3$ Department of Mathematics, ETH Zurich,}\\ 	
	\small{Switzerland; e-mail: \texttt{patrick.cheridito}\textcircled{\texttt{a}}\texttt{math.ethz.ch}}
	\smallskip\\
  \small{$^4$ School of Data Science and Shenzhen Research Institute of Big Data, The Chinese University}\\ 	
  \small{of Hong Kong, Shenzhen (CUHK-Shenzhen), China; e-mail: \texttt{ajentzen}\textcircled{\texttt{a}}\texttt{cuhk.edu.cn}}
  \smallskip\\
  \small{$^5$ Applied Mathematics: Institute for Analysis and Numerics,}\\
  \small{University of M\"unster, Germany; e-mail: \texttt{ajentzen}\textcircled{\texttt{a}}\texttt{uni-muenster.de}}
  \smallskip\\
  \small{$^6$ Seminar for Applied Mathematics, Department of}\\
  \small{Mathematics, ETH Zurich, Switzerland}
  \smallskip\\
  \small{$^7$ School of Data Science, The Chinese University of Hong Kong,}\\
  \small{Shenzhen (CUHK-Shenzhen), China; e-mail: \texttt{philippevw}\textcircled{\texttt{a}}\texttt{cuhk.edu.cn}}
  \smallskip\\
  \small{$^8$ Department of Mathematics, ETH Zurich, Switzerland;}\\
  \small{e-mail: \texttt{philippe.vonwurstemberger}\textcircled{\texttt{a}}\texttt{math.ethz.ch}}
}

\maketitle

\begin{abstract}

In this paper we develop a new machinery to study the capacity of artificial neural networks (ANNs) to approximate high-dimensional functions without suffering from the curse of dimensionality.
Specifically, we introduce a concept which we refer to as \emph{approximation spaces of artificial neural networks} and we present several tools to handle those spaces.
Roughly speaking, approximation spaces consist of sequences of functions which can, in a suitable way, be approximated by ANNs without curse of dimensionality in the sense that the number of required ANN parameters to approximate a function of the sequence with an accuracy $\varepsilon > 0$ grows at most polynomially both in the reciprocal $1/\varepsilon$ of the required accuracy and in the dimension $d \in \N = \{1, 2, 3, \ldots \}$ of the function. 
We show 
that these approximation spaces are closed under various operations including linear combinations, formations of limits, and infinite compositions.
To illustrate the utility of the machinery proposed in this paper, we employ the developed theory to prove that ANNs have the capacity to overcome the curse of dimensionality in the numerical approximation of certain first order transport partial differential equations (PDEs). 
We even prove that approximation spaces are closed under flows of first order transport PDEs.

\end{abstract}

\tableofcontents

\section{Introduction}
\label{sect:intro}

\newcommand{\ASIntroFirst}[1]{\mathfrak{S}}
\newcommand{\ASIntro}[1]{\cfadd{Def:AS_intro}\mathfrak{S}_{#1}}
\newcommand{\realisationRelu}{\cfadd{Def:ANN_intro}\mathscr{R}}

In the last decade, the field of deep learning has achieved astonishing results by training artificial neural networks (ANNs) to perform various computational tasks in a wide range of fields including 
	image and language recognition (cf., e.g., \cite{Krizhevsky2017,Young2018,Graves2013}), 
	game intelligence (cf., e.g., \cite{guo2014deep,Silver2016}), and 
	the numerical approximation of solutions of partial differential equations (PDEs) (cf., e.g., \cite{han2018solving,Weinan2017,sirignano2018dgm}).
Accordingly, there is currently a strong interest in the scientific community to understand the success of deep learning.
Theoretical deep learning papers usually focus on different aspects of deep learning algorithms such as, for example, 
	optimization methods and training algorithms (cf., e.g., \cite{Cheridito2020a,Fehrman2019,JentzenVW18,Jentzen2020,Bottou2010,Li2015}), 
	generalization errors of ANNs (cf., e.g., \cite{Berner2020,Beck2019published,Jentzen2023c,Weinan2020,Jakubovitz2018,Advani2017}), or the  
	capacity of ANNs to approximate various kinds of functions (cf., e.g., \cite{Gonon19Uniform,Grohs2023,JentzenSalimovaWelti2021,GrohsHerrmann2020arxiv,Beck2019published,Jentzen2023c,GononSchwab20,Boelcskei2019,Petersen2017,Kutyniok2019Atheoretical,Reisinger2019Rectified,Gribonval2019,Benth2023}).

In this paper we study the capacity of artificial neural networks to approximate high-dimensional functions without suffering from the curse of dimensionality.
In the context of numerical approximations of solutions of PDEs, there have been several recent results establishing that ANNs have the capacity to overcome the curse of dimensionality when approximating solutions of various high-dimensional, possibly nonlinear, PDEs (cf., e.g., 
\cite{Beck2024,GrohsHerrmann2020arxiv,Grohs2023,JentzenSalimovaWelti2021,Hutzenthaler2019Aproof,MR4534487,Gonon19Uniform,Reisinger2019Rectified,GononSchwab20}).
The majority of those recent results show, under suitable assumptions on the initial condition, the dynamics, and the nonlinearity of some parabolic PDE, 
that the terminal value of the PDE can be approximated by neural networks with a number of parameters growing at most polynomially both in the dimension of the PDE and in the reciprocal of the required accuracy.

The aim of this work is to provide new tools to state and prove theorems on the capacity of ANNs to overcome the curse of dimensionality and thereby gain more insight into the class of functions which can be approximated by ANNs without suffering from the curse of dimensionality.
The central idea is to introduce what we refer to as \emph{approximation spaces of artificial neural networks} and to develop a theory to handle those spaces in an efficient and elegant way.
Loosely speaking, approximation spaces of ANNs consist of sequences of functions which can be approximated by ANNs without curse of dimensionality (cf.\
\cref{Def:AS_intro,Def:approximation_spaces_norates,Def:approximation_spaces_GH}).
We demonstrate that approximation spaces are closed
under 
	linear combinations (cf.\ \cref{subsect:linear_comb}), 
	formations of limits (cf.\   \cref{subsect:limits}), 
	infinite compositions  (cf.\ \cref{subsect:compositions}), and
	operations such as performing Euler steps (cf.\ \cref{subsect:Euler}).
To illustrate the utility of the machinery proposed in this paper we employ these properties to prove that approximation spaces are closed
under 
	flows of first order transport PDEs (cf.\ \cref{sect:appl_to_PDEs}  and \cref{intro_thm}).
We thereby show that ANNs have the capacity to approximate solutions of first order transport PDEs without suffering from the curse of dimensionality.

To make this more concrete, we introduce in \cref{Def:AS_intro} a simplified version of our approximation spaces  
and present in \cref{intro_thm} a result for first order transport PDEs based on those simplified approximation spaces.
For this we first recall the mathematical description of ANNs used in this paper (cf., e.g., \cite{Petersen2017,MR4534487,Jentzen2023}).

\newcommand{\Rect}{ {\cfadd{Def:ANN_intro}\mathfrak{R}}}
\cfclear
\begin{definition}[ANNs and associated ReLU realizations]
\label{Def:ANN_intro}
We denote by
$\ANNs$ the set given by 
$
	\ANNs
	= 
	\mycup_{L \in \N}
	\mycup_{ l_0,l_1,\ldots, l_L \in \N }
	\left(
	\times_{k = 1}^L (\R^{l_k \times l_{k-1}} \times \R^{l_k})
	\right)
$,
we denote by 
 $\Rect \colon \mycup_{d \in \N}  \R^d \to \mycup_{d \in \N} \R^d$ the function which satisfies for all
	$d \in \N$, 
	$x = (x_1, x_2, \ldots, x_d) \in \R^d$ 
that
\begin{equation}
\label{T_B_D}
\begin{split} 
  \Rect(x) = (\max\{x_1, 0\}, \max\{x_2, 0\}, \ldots, \max\{x_d, 0\}),
\end{split}
\end{equation}
and we denote by
	$\paramANN \colon \ANNs \to \N$ and
	$\realisationRelu \colon \ANNs \to \mycup_{k,l\in\N}C(\R^k,\R^l)$ 
the functions which satisfy for all 
	$ L\in\N$, 
	$l_0,l_1,\ldots, l_L \in \N$, 
	$
	\ANNf 
	=
	((W_1, B_1),(W_2, B_2),\allowbreak \ldots, (W_L,\allowbreak B_L))
	\in  \allowbreak
	( \times_{k = 1}^L\allowbreak(\R^{l_k \times l_{k-1}} \times \R^{l_k}))
	$,
	$x_0 \in \R^{l_0}, x_1 \in \R^{l_1}, \ldots, x_{L-1} \in \R^{l_{L-1}}$ 
with 
	$\forall \, k \in \N \cap (0,L) \colon x_k =\Rect(W_k x_{k-1} + B_k)$  
that 
$	\paramANN(\ANNf) = \sum_{k = 1}^L l_k(l_{k-1} + 1)$,
$\realisationRelu(\ANNf) \in C(\R^{l_0},\R^{l_L})$, and
\begin{equation}
\label{T_B_D}
\begin{split} 
  ( \realisationRelu(\ANNf) ) (x_0) = W_L x_{L-1} + B_L.
\end{split}
\end{equation}
\end{definition}
\noindent
The set $\ANNs$ above corresponds to the set of artificial neural networks,
the function $\Rect$ corresponds to the rectified linear unit (ReLU) activation function,
and for every 
	$\ANNf \in \ANNs$
the number $\paramANN(\ANNf) \in \N$ corresponds to the number of parameters of the ANN $\ANNf$
and the function $\realisationRelu(\ANNf)$ corresponds to the realization of the ANN $\ANNf$ with the ReLU as activation function.
We now proceed to the definition of the simplified approximation spaces considered in this introduction (see \cref{Def:approximation_spaces_norates,Def:approximation_spaces_GH} for the general definitions).

\cfclear
\begin{definition}[Simplified approximation spaces]
\label{Def:AS_intro}
Let $\kappa, \delta \in [0,\infty)$.
Then we denote by $\ASIntro{\kappa, \delta}$ the set 
given\footnote{
Recall that for all $d \in \N$, $x = (x_1, x_2, \ldots, x_d) \in \R^d$ we have that
$
	\left\| x \right\|^2 = \sum_{i = 1}^d |x_i|^2
$ (cf.\ \cref{euclidean_norm_def}).} 
by
\begin{equation}
\label{ASIntroScnd}
\begin{split}	
	\ASIntro{\kappa, \delta}
=
	\left\{ 
		(h_d)_{d \in \N} \subseteq  \mycup_{k,l \in \N} C(\R^k, \R^l) 
	\colon \left[
		\begin{array}{c}
    			\exists \, K \in \R \colon 
    			\forall \, d \in \N, \varepsilon \in (0,1] \colon \\
    			\exists \, \ANNh \in \ANNs, \mathbf{d} \in \N \colon
    			\forall \, x \in \R^{d} \colon \\
   			 \left(\begin{array}{l}
    				\{h_d, \realisationRelu(\ANNh)\} \subseteq C(\R^{d}, \R^{\mathbf{d}}), \\
    				\param(\ANNh) 
    				\leq K \varepsilon^{-K} d^K,\\
    				\norm{ h_d(x) - ( \realisationRelu(\ANNh) ) (x) }  \leq \varepsilon (1 + \norm{x}^\kappa),\\
				\delta  \Norm{( \realisationRelu(\ANNh) ) (x) } \leq K (\varepsilon^{-\delta} d^K + \norm{x})
    			\end{array} \right)
    		\end{array}
    	\right] 
	\right\}
\end{split}
\end{equation}
(cf.\ \cref{Def:ANN_intro}).
\end{definition}
\noindent
For every $\kappa, \delta \in [0,\infty)$ the set $\ASIntro{\kappa, \delta}$ consists of sequences of functions indexed over dimensions $d \in \N$ in which each function of the sequence can be approximated up to any required accuracy $\varepsilon \in  (0,1]$  by an ANN
with a number of parameters which grows at most polynomially both in the reciprocal $\nicefrac{1}{\varepsilon}$ of the required accuracy  and the dimension $d$.
Moreover, for every $\kappa \in [0,\infty)$, $\delta  \in (0,\infty)$ the realizations of the approximating ANNs for sequences of functions in $\ASIntro{\kappa,\delta}$ are only allowed to grow at most linearly with an intercept which grows at most polynomially in the dimension $d \in \N$ and at most with rate $\delta$ in the required accuracy $\varepsilon \in (0,1]$.
Note that the parameter $\kappa \in [0,\infty)$ is used to measure the accuracy of the approximating ANNs in the spaces $(\ASIntro{\kappa, \delta})_{\delta \in [0,\infty)}$.
The sets $\ASIntro{\kappa, 0}$, $\kappa \in [0,\infty)$, are simplified versions of the general approximation spaces defined in \cref{Def:approximation_spaces_norates} and
the sets $\ASIntro{\kappa, \delta}$, $\kappa \in [0,\infty)$, $\delta  \in (0,\infty)$, are simplified versions of the general approximation spaces defined in \cref{Def:approximation_spaces_GH}.
Employing the approximation spaces introduced in \cref{Def:AS_intro} above we now state in \cref{intro_thm} below a consequence of our main result on first order transport PDEs, \cref{flow_statement} below.

\cfclear
\begin{theorem}
\label{intro_thm}
Let 
	$T, c \in (0,\infty)$, $\kappa \in [1,\infty)$, $\delta \in (0,\nicefrac{1}{\kappa})$,
	$(f_d)_{d \in \N} \in \ASIntro{\kappa, \delta}$,
let $u_d \in C^1([0, T] \times \R^d, \R)$, $d \in \N$, satisfy $(u_d(0, \cdot))_{d \in \N} \in \ASIntro{\kappa, 0}$,
and assume for all
	$d \in \N$,
	$t \in [0, T]$,
	$x, y \in \R^d$
that 
$f_d \in C^1(\R^d, \R^d)$,
$\norm{f_d(0)} \leq cd^c$,	
$
    \norm{ f_{d}(x) - f_{d}(y) } + |u_d(0, x) - u_d(0, y)|
    \leq c\Norm{ x - y }
$,
and
\begin{equation}
\label{intro_thm:ass1}
\begin{split} 
  \tfrac{\partial  u_d}{\partial t}  (t,x)  
=
  \tfrac{\partial u_d}{\partial x} (t,x)\, f_d(x)
\end{split}
\end{equation}
(cf.\ \cref{Def:AS_intro}).
Then it holds for all $t \in [0, T]$ that 
\begin{equation}
\label{T_B_D}
\begin{split} 
  (u_d(t, \cdot))_{d \in \N} \in \ASIntro{\kappa, 0}
\end{split}
\end{equation}
\end{theorem}
\noindent
\cref{intro_thm} is a direct consequence of \cref{relu_flow_statement_max} in Subsection \ref{subsect:PDE_flows_relu}. 
\cref{relu_flow_statement_max}, in turn, follows from \cref{flow_statement} in \cref{subsect:PDE_flows_in_appr_space}.
Note that the assumptions 
	on the drift functions $(f_d)_{d \in \N}$ of the PDEs in \eqref{intro_thm:ass1} and 
	on the initial values $(u_d(0, \cdot))_{d \in \N}$ of the PDEs in \eqref{intro_thm:ass1}
as well as the conclusion of \cref{intro_thm} are conveniently formulated in terms of the approximation spaces introduced in \cref{Def:AS_intro}.
Loosely speaking, \cref{intro_thm} states that if the sequence of initial conditions of the first order transport PDEs in \eqref{intro_thm:ass1} are contained in the approximation space $\ASIntro{\kappa, 0}$, then this property is preserved along the flows of these PDEs.
To the best of our knowledge, \cref{intro_thm} is the only theorem about approximation capacities of ANNs for PDEs which measures the approximation errors of ANNs based on a supremal condition on the entire euclidean space. 
Most papers in the scientific literature consider approximation errors in the $L^p$-sense (cf., e.g., \cite{Grohs2023,Hutzenthaler2019Aproof,Reisinger2019Rectified}) and some in the supremum sense but on a compact set (cf., e.g., \cite{Beck2024,GrohsHerrmann2020arxiv,Gonon19Uniform,GononSchwab20}).

The remainder of this article is organized as follows.
In \cref{sect:ANN_calculus} we recall the definition of ANNs (cf.\ \cref{subsect:ANN_def}) and present elementary properties of operations with ANNs such as compositions of ANNs (cf.\ \cref{subsect:ANN_comp}) and sums of ANNs (cf.\ \cref{subsect:ANN_sum,subsect:ANN_Euler}).
In \cref{sect:AS} we introduce the notion of approximation spaces of ANNs 
(cf.\ \cref{subsect:functionfamilies,subsect:dimmappings,subsect:AS}), 
the central concept of this paper, 
and develop a theory for those spaces (cf.\ \cref{subsect:linear_comb,subsect:limits,subsect:compositions,subsect:Euler}).
In \cref{sect:appl_to_PDEs} we consider the flow of first order transport PDEs (cf.\ \cref{subsect:PDE_flow}) and 
show how the  theory developed in \cref{sect:AS}
combined with 
the Euler scheme (cf.\ \cref{subsect:euler_scheme}) can be employed to
prove results on the approximation capacity of ANNs in the case of first order transport PDEs (cf.\ \cref{subsect:PDE_flows_in_appr_space,subsect:PDE_flows_relu}).

\section{Artificial neural network (ANN) calculus}
\label{sect:ANN_calculus}

In this section, we introduce and discuss some concepts and operations related to the set of ANNs $\ANNs$ presented in \cref{Def:ANN_intro} such as
	realizations of ANNs for a general activation function (cf.\ \cref{subsect:ANN_def}),
 	compositions of ANNs (cf.\ \cref{subsect:ANN_comp}), 
 	sums of ANNs (cf.\ \cref{subsect:ANN_sum}), and 
 	the existence of ANNs emulating Euler steps (cf.\ \cref{subsect:ANN_Euler}).
This section is an extension of the calculus for ANNs developed in Grohs et al.\ \cite{MR4534487}.
In some results we consider activation functions which allow the identity function to be efficiently represented by a neural network with one hidden layer (cf.\ \cref{Lemma:SumsOfANNS} and \cref{Lemma:EulerWithANNS}). The most common activation functions for which this is the case are the ReLU activation function and leaky ReLU activation functions (cf.\ \cref{identity_representation} in \cref{sect:appl_to_PDEs}).

\subsection[ANNs and their realizations]{Artificial neural networks and their realizations}
\label{subsect:ANN_def}

\begin{definition}[Architecture mappings of ANNs]
	\label{Def:ANN2}
	We denote by 
	$\lengthANN \colon \ANNs \to \N$, 
	$\inDimANN \colon \ANNs \to \N$, 
	$\outDimANN \colon \ANNs \to \N
	$, $\hidLengthAnn\colon \ANNs \to \N_0$, and
	$\dims\colon\ANNs\to  \mycup_{L=2}^\infty \N^{L}$
	 the functions which satisfy
	for all $ L\in\N$, $l_0,l_1,\ldots, l_L \in \N$, 
	$
	\ANNf 
	\in  \allowbreak
	( \times_{k = 1}^L\allowbreak(\R^{l_k \times l_{k-1}} \times \R^{l_k}))$
	that
	$\lengthANN(\ANNf)=L$, 
	$\inDimANN(\ANNf)=l_0$,
	$\outDimANN(\ANNf)=l_L$,
	$\hidLengthAnn(\ANNf)=L-1$, and
	$\dims(\ANNf)= (l_0,l_1,\ldots, l_L)$
(cf.\ \cref{Def:ANN_intro}).
\end{definition}

\newcommand{\multdim}{\cfadd{Def:multidim_version}\mathfrak M}
\begin{definition}[Multidimensional versions]
	\label{Def:multidim_version}
	Let $a \colon \R \to \R$ be a function.
	Then we denote by $\multdim_{a} \colon \mycup_{d \in \N}  \R^d \to \mycup_{d \in \N} \R^d$ the function which satisfies for all $d\in\N$, $ x = ( x_1, \dots, x_{d} ) \in \R^{d} $ that
$	\multdim_{a}( x ) 
	=
	\left(
	a(x_1)
	,
	\ldots
	,
	a(x_d)
	\right)
$.
\end{definition}

\cfclear
\begin{definition}[Realizations associated to ANNs]
	\label{Def:ANNrealization}
	Let $a\in C(\R,\R)$.
	Then we denote by 
	$
	\functionANN \colon \ANNs \to \mycup_{k,l\in\N} C(\R^k,\R^l)
	$
	the function which satisfies
	for all  $ L\in\N$, $l_0,l_1,\ldots, l_L \in \N$, 
	$
	\ANNf 
	=
	((W_1, B_1),(W_2, B_2),\allowbreak \ldots, (W_L,\allowbreak B_L))
	\in  \allowbreak
	( \times_{k = 1}^L\allowbreak(\R^{l_k \times l_{k-1}} \times \R^{l_k}))
	$,
	$x_0 \in \R^{l_0}, x_1 \in \R^{l_1}, \ldots, x_{L-1} \in \R^{l_{L-1}}$ 
	with $\forall \, k \in \N \cap (0,L) \colon x_k =\multdim_{a}(W_k x_{k-1} + B_k)$  
	that
$
	\functionANN(\ANNf) \in C(\R^{l_0},\R^{l_L})$ and $
	( \functionANN(\ANNf) ) (x_0) = W_L x_{L-1} + B_L
$
	(cf.\ \cref{Def:multidim_version,Def:ANN_intro}).
\end{definition}

\subsection{Compositions of ANNs}
\label{subsect:ANN_comp}
\cfclear
\begin{definition}[Standard compositions of ANNs]
	\label{Definition:ANNcomposition}
	We denote by $\compANN{(\cdot)}{(\cdot)}\colon\allowbreak \{(\ANNf_1,\ANNf_2)\allowbreak\in\ANNs\times \ANNs\colon \inDimANN(\ANNf_1)=\outDimANN(\ANNf_2)\}\allowbreak\to\ANNs$ the function which satisfies for all 
	$ L,\mathfrak{L}\in\N$, $l_0,l_1,\ldots, l_L, \mathfrak{l}_0,\mathfrak{l}_1,\ldots, \mathfrak{l}_\mathfrak{L} \in \N$, 
	$
	\ANNf_1
	=
	((W_1, B_1),(W_2, B_2),\allowbreak \ldots, (W_L,\allowbreak B_L))
	\in  \allowbreak
	( \times_{k = 1}^L\allowbreak(\R^{l_k \times l_{k-1}} \times \R^{l_k}))
	$,
	$
	\ANNf_2
	=
	((\mathfrak{W}_1, \mathfrak{B}_1),\allowbreak(\mathfrak{W}_2, \mathfrak{B}_2),\allowbreak \ldots, (\mathfrak{W}_\mathfrak{L},\allowbreak \mathfrak{B}_\mathfrak{L}))
	\in  \allowbreak
	( \times_{k = 1}^\mathfrak{L}\allowbreak(\R^{\mathfrak{l}_k \times \mathfrak{l}_{k-1}} \times \R^{\mathfrak{l}_k}))
	$ 
	with $l_0=\inDimANN(\ANNf_1)=\outDimANN(\ANNf_2)=\mathfrak{l}_{\mathfrak{L}}$
	that
	\begin{equation}\label{ANNoperations:Composition}
	\begin{split}
	&\compANN{\ANNf_1}{\ANNf_2}=
	\begin{cases} 
			\begin{array}{r}
			\big((\mathfrak{W}_1, \mathfrak{B}_1),(\mathfrak{W}_2, \mathfrak{B}_2),\ldots, (\mathfrak{W}_{\mathfrak{L}-1},\allowbreak \mathfrak{B}_{\mathfrak{L}-1}),
			(W_1 \mathfrak{W}_{\mathfrak{L}}, W_1 \mathfrak{B}_{\mathfrak{L}}+B_{1}),\\ (W_2, B_2), (W_3, B_3),\ldots,(W_{L},\allowbreak B_{L})\big)
			\end{array}
	&: L>1<\mathfrak{L} \\[3ex]
	\big( (W_1 \mathfrak{W}_{1}, W_1 \mathfrak{B}_1+B_{1}), (W_2, B_2), (W_3, B_3),\ldots,(W_{L},\allowbreak B_{L}) \big)
	&: L>1=\mathfrak{L}\\[1ex]
	\big((\mathfrak{W}_1, \mathfrak{B}_1),(\mathfrak{W}_2, \mathfrak{B}_2),\allowbreak \ldots, (\mathfrak{W}_{\mathfrak{L}-1},\allowbreak \mathfrak{B}_{\mathfrak{L}-1}),(W_1 \mathfrak{W}_{\mathfrak{L}}, W_1 \mathfrak{B}_{\mathfrak{L}}+B {1}) \big)
	&: L=1<\mathfrak{L}  \\[1ex]
	(W_1 \mathfrak{W}_{1}, W_1 \mathfrak{B}_1+B_{1}) 
	&: L=1=\mathfrak{L} 
	\end{cases}
	\end{split}
	\end{equation}
	(cf.\ \cref{Def:ANN_intro,Def:ANN2}).
\end{definition}

\begin{samepage}

\cfclear
\begin{prop}\label{Lemma:PropertiesOfCompositions_n2}
Let 
	$n \in \N$,
	$\ANNf_1, \ANNf_2, \ldots, \ANNf_n \in \ANNs$
satisfy for all $k\in\N \cap (0,n)$ that
	$\inDimANN(\ANNf_k)=\outDimANN(\ANNf_{k+1})$
and let 
	$(l_{k,j})_{(k,j) \in \{ (\mathbf{k}, \mathbf{j}) \in  \{1, 2, \ldots, n\} \times \N_0 \colon \mathbf{j} \leq \lengthANN({\ANNf_{\mathbf{k}}})\}} \subseteq \N$
satisfy for all 
	$k\in\{1, 2, \ldots, n\}$
that
	$\dims(\ANNf_k)=(l_{k,0},l_{k,1},\dots, l_{k,\lengthANN(\ANNf_k)})$
\cfload.
	Then
\begin{enumerate}[(i)]

\item \label{PropertiesOfCompositions_n:Input} 
 it holds that
$
\inDimANN(\ANNf_1 \compANNbullet \ANNf_2\compANNbullet \ldots \compANNbullet \ANNf_n)=\inDimANN(\ANNf_n),
$

\item \label{PropertiesOfCompositions_n:Output} 
 it holds that
$
\outDimANN(\ANNf_1 \compANNbullet \ANNf_2\compANNbullet \ldots \compANNbullet \ANNf_n)=\outDimANN(\ANNf_1),
$
\item \label{PropertiesOfCompositions_n:Realization} 
 it holds for all  $\activation\in C(\R,\R)$ that
\begin{equation}\label{PropertiesOfCompositions:RealizationEquation_n}
\functionANN({\ANNf_1}\compANNbullet{\ANNf_2}\compANNbullet \ldots \compANNbullet \ANNf_n)=\functionANN(\ANNf_1)\circ \functionANN(\ANNf_2)\circ \ldots \circ \functionANN(\ANNf_n),
\end{equation}

\item \label{PropertiesOfCompositions_n:Dims} it holds that
\begin{multline}
     \dims({\ANNf_1}\compANNbullet{\ANNf_2}\compANNbullet \ldots \compANNbullet \ANNf_n)
    \\  = (l_{n,0},l_{n,1},\dots, l_{n,\lengthANN(\ANNf_n)-1},l_{n-1,1},\dots,l_{n-1,\lengthANN(\ANNf_{n-1})-1},l_{n-2,1},\dots,l_{n-2,\lengthANN(\ANNf_{n-2})-1},\ldots,
    \\  \quad l_{1,1}, \ldots, l_{1,\lengthANN(\ANNf_1)-1}, l_{1,\lengthANN(\ANNf_1)} )
\end{multline}
and
\item \label{PropertiesOfCompositions_n:Params} it holds that
\begin{equation}
\begin{split}
\paramANN({\ANNf_1}\compANNbullet{\ANNf_2}\compANNbullet \ldots \compANNbullet \ANNf_n)
\leq
\left[ \sum_{k=1}^n\paramANN(\ANNf_k) \right]
+ \left[ \sum_{k = 1}^{n-1}l_{k,1}( l_{k+1,\lengthANN(\ANNf_{k+1})-1}+1)\right]
\end{split}
\end{equation}
\end{enumerate}
\cfout.
\end{prop}

\begin{proof}%
Observe that, e.g., Grohs et al.\ \cite[Proposition 2.6 and Lemma 2.8]{MR4534487}
and induction establish items~\eqref{PropertiesOfCompositions_n:Input}--\eqref{PropertiesOfCompositions_n:Params}.
The proof of \cref{Lemma:PropertiesOfCompositions_n2} is thus complete.
\end{proof}

\end{samepage}

\cfclear
\begin{cor}
\label{Lemma:PropertiesOfCompositions_n3}
    Let $n \in \{2, 3, \ldots \}$,
    $\ANNf_1, \ANNf_2, \ldots, \ANNf_n \in \ANNs$
	satisfy for all $k\in \N \cap (0,n)$ that
	$\inDimANN(\ANNf_k)=\outDimANN(\ANNf_{k+1})$
	\cfload.
	Then 
\begin{equation}
\label{PropertiesOfCompositions_n:Params2}
\paramANN({\ANNf_1}\compANNbullet{\ANNf_2}\compANNbullet \ldots \compANNbullet \ANNf_n)
\leq 2 \left( \sum_{k=1}^{n-1}\paramANN(\ANNf_k)\paramANN(\ANNf_{k+1}) \right)
\end{equation}  
\cfout.
\end{cor}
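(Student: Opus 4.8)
The plan is to obtain \eqref{PropertiesOfCompositions_n:Params2} by feeding the inequality already supplied by \cref{Lemma:PropertiesOfCompositions_n1} into two elementary estimates. Writing $L_k=\lengthANN(\ANNf_k)$ and using the notation $(l_{k,j})$ of \cref{Lemma:PropertiesOfCompositions_n1}, that proposition yields
\[
  \paramANN({\ANNf_1}\compANNbullet{\ANNf_2}\compANNbullet \ldots \compANNbullet \ANNf_n)
  \leq
  \Bigl[\smallsum_{k=1}^n\paramANN(\ANNf_k)\Bigr]
  + \Bigl[\smallsum_{k=1}^{n-1}l_{k,1}\bigl(l_{k+1,L_{k+1}-1}+1\bigr)\Bigr] ,
\]
so it suffices to bound each of the two brackets by $\smallsum_{k=1}^{n-1}\paramANN(\ANNf_k)\paramANN(\ANNf_{k+1})$.

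First I would record the elementary facts that every $\ANNg \in \ANNs$ with $\dims(\ANNg) = (\mathfrak{l}_0, \mathfrak{l}_1, \ldots, \mathfrak{l}_{\mathfrak{L}})$ satisfies $\paramANN(\ANNg) = \smallsum_{j=1}^{\mathfrak{L}}\mathfrak{l}_j(\mathfrak{l}_{j-1}+1) \geq \mathfrak{l}_1(\mathfrak{l}_0+1) \geq 2$, $\paramANN(\ANNg) \geq \mathfrak{l}_1$, and $\paramANN(\ANNg) \geq \mathfrak{l}_{\mathfrak{L}}(\mathfrak{l}_{\mathfrak{L}-1}+1) \geq \mathfrak{l}_{\mathfrak{L}-1}+1$, using $\mathfrak{l}_0,\mathfrak{l}_1,\ldots,\mathfrak{l}_{\mathfrak{L}} \in \N$. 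In particular $l_{k,1} \leq \paramANN(\ANNf_k)$ and $l_{k+1,L_{k+1}-1}+1 \leq \paramANN(\ANNf_{k+1})$ for every $k \in \N \cap (0,n)$, which immediately bounds the second bracket above by $\smallsum_{k=1}^{n-1}\paramANN(\ANNf_k)\paramANN(\ANNf_{k+1})$, and $\paramANN(\ANNf_k) \geq 2$ for every $k \in \{1, 2, \ldots, n\}$. For the first bracket I would use that $a+b \leq ab$ whenever $a, b \in [2,\infty)$ (equivalently $(a-1)(b-1) \geq 1$): applying this with $a = \paramANN(\ANNf_k)$ and $b = \paramANN(\ANNf_{k+1})$ and summing over $k \in \{1, 2, \ldots, n-1\}$ gives
\[
  \smallsum_{k=1}^n\paramANN(\ANNf_k)
  \leq
  \smallsum_{k=1}^{n-1}\bigl(\paramANN(\ANNf_k)+\paramANN(\ANNf_{k+1})\bigr)
  \leq
  \smallsum_{k=1}^{n-1}\paramANN(\ANNf_k)\paramANN(\ANNf_{k+1}) ,
\]
where the first inequality holds because each $k \in \{1, 2, \ldots, n\}$ occurs in at least one of the pairs $(1,2), (2,3), \ldots, (n-1,n)$ (this is where $n \geq 2$ is used). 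Adding the two bounds yields \eqref{PropertiesOfCompositions_n:Params2}.

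There is no genuinely difficult step here; the only points requiring care are the index bookkeeping in the telescoping-type estimate $\smallsum_{k=1}^n\paramANN(\ANNf_k) \leq \smallsum_{k=1}^{n-1}(\paramANN(\ANNf_k)+\paramANN(\ANNf_{k+1}))$, where the interior indices $2,\ldots,n-1$ are counted twice on the right while the two endpoints are counted once, and the correct reading of the shorthand symbols $l_{k,1}$ and $l_{k+1,L_{k+1}-1}$ from \cref{Lemma:PropertiesOfCompositions_n1} when some $\ANNf_k$ has only a single layer (then $l_{k,1}$ is the output dimension of $\ANNf_k$ and $l_{k,L_k-1}=l_{k,0}$), in which case all of the inequalities above still hold verbatim.
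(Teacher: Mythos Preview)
Your proof is correct and follows essentially the same approach as the paper: start from the upper bound in \cref{Lemma:PropertiesOfCompositions_n1}, bound the cross term $l_{k,1}(l_{k+1,L_{k+1}-1}+1)$ by $\paramANN(\ANNf_k)\paramANN(\ANNf_{k+1})$, and control $\sum_{k=1}^n\paramANN(\ANNf_k)$ using $\paramANN(\ANNf_k)\geq 2$ together with $a+b\leq ab$ for $a,b\geq 2$. The paper arranges that last estimate slightly differently (bounding $\paramANN(\ANNf_k)\leq\paramANN(\ANNf_k)\paramANN(\ANNf_{k+1})$ for $k\leq n-2$ and then applying $a+b\leq ab$ only to the final pair), but this is a cosmetic variation of the same argument.
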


\begin{proof}
\cref{Lemma:PropertiesOfCompositions_n3} is a consequence of item~\eqref{PropertiesOfCompositions_n:Params} in \cref{Lemma:PropertiesOfCompositions_n2}. See, e.g., \cite[Corollary 2.8]{Beneventano2020v1} for a detailed proof.
\end{proof}

\subsection{Sums of ANNs}
\label{subsect:ANN_sum}

\cfclear
\begin{prop}[Sums of ANNs]\label{Lemma:SumsOfANNS}
Let 
$a\in C(\R,\R)$, 
$\lambda_1, \lambda_2, c \in\R$,
$\ANNf_1,\ANNf_2 \in\ANNs$, 
$(\hiddenDimId_{d})_{d \in \N} \subseteq \N$, 
$(\ANNi_{d})_{d \in \N} \subseteq \ANNs$ satisfy for all 
	$d \in \N$, $x\in\R^{d}$
 that 
$\dims(\ANNi_{d}) = (d,\hiddenDimId_{d},d)$, $\hiddenDimId_{d} \leq c d$, $(\functionANN(\ANNi_{d}))(x)=x$, 
$\inDimANN(\ANNf_1)=\inDimANN(\ANNf_2)$, and $\outDimANN(\ANNf_1) = \outDimANN(\ANNf_2)$
\cfload.
	Then 
	there exists $\ANNh\in \ANNs$
 such that
 \begin{enumerate}[(i)]
 	\item \label{SumsOfANNS:item1} 
 	it holds that $\functionANN (\ANNh)\in C(\R^{\inDimANN(\ANNf_1)},\R^{\outDimANN(\ANNf_1)})$,
 	\item \label{SumsOfANNS:item2} 
 	it holds for all
 	$x\in\R^{\inDimANN(\ANNf_1)}$ that  
	$
 	(\functionANN (\ANNh))(x)=\lambda_1(\functionANN (\ANNf_1))(x) + \lambda_2(\functionANN (\ANNf_2))(x),
 	$
 	and
 	\item \label{SumsOfANNS:item3} it holds that 
 	$
 		\paramANN(\ANNh)
 	\leq 
 	11 \max \{1, c^2\} (\max \{ \inDimANN(\ANNf_1), \outDimANN(\ANNf_1)\})^2
 	(\paramANN(\ANNf_1) + \paramANN(\ANNf_2))
 	$.
 \end{enumerate}		
\end{prop}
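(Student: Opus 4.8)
The plan is to realise $\lambda_1\functionANN(\ANNf_1)+\lambda_2\functionANN(\ANNf_2)$ by running $\ANNf_1$ and $\ANNf_2$ one after the other, each alongside a thin ``carry'' track built from copies of the given identity ANNs, and glued together by one-layer affine ANNs. I would first reduce to the case $\lengthANN(\ANNf_1)\geq\lengthANN(\ANNf_2)$ (the other case follows by interchanging the pairs $(\ANNf_1,\lambda_1)$ and $(\ANNf_2,\lambda_2)$), abbreviate $L_i=\lengthANN(\ANNf_i)$, $p=\inDimANN(\ANNf_1)=\inDimANN(\ANNf_2)$, $q=\outDimANN(\ANNf_1)=\outDimANN(\ANNf_2)$, $m=\max\{p,q\}$, and for $d,n\in\N$ put $\ANNi_d^{(n)}=\ANNi_d\compANNbullet\ANNi_d\compANNbullet\ldots\compANNbullet\ANNi_d$ ($n$ factors). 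By \cref{Lemma:PropertiesOfCompositions_n2} one gets $\functionANN(\ANNi_d^{(n)})=\id_{\R^d}$ and $\lengthANN(\ANNi_d^{(n)})=n+1$, and by \cref{Lemma:PropertiesOfCompositions_n1} the interior widths of $\ANNi_d^{(n)}$ are bounded by $\hiddenDimId_d\leq cd$ (with $d$ at the two ends) and $\paramANN(\ANNi_d^{(n)})$ is at most a universal constant times $n\,(\hiddenDimId_d)^2$.

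Then I would set $\ANNh=\Phi_2\compANNbullet\ANNi_{p+q}\compANNbullet\Phi_1$, where $\Phi_1$ is the parallelization (i.e.\ the block-diagonal stacking) of $\ANNf_1$ with its output layer rescaled by $\lambda_1$ and of $\ANNi_p^{(L_1-1)}$, precomposed with the affine duplication ANN $x\mapsto(x,x)$, so that $\functionANN(\Phi_1)(x)=(\lambda_1\functionANN(\ANNf_1)(x),x)$ and $\lengthANN(\Phi_1)=L_1$; $\Phi_2$ is the parallelization of $\ANNi_q^{(L_2-1)}$ and $\ANNf_2$, postcomposed with the affine combination ANN $(a,b)\mapsto a+\lambda_2 b$, so that $\functionANN(\Phi_2)(a,x)=a+\lambda_2\functionANN(\ANNf_2)(x)$ and $\lengthANN(\Phi_2)=L_2$; and the inserted identity ANN $\ANNi_{p+q}$ at the thin (width $p+q$) junction keeps the composition from fusing the two wide interior layers of $\Phi_1$ and $\Phi_2$ into one. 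Items~\eqref{SumsOfANNS:item1} and~\eqref{SumsOfANNS:item2} then follow from \cref{Lemma:PropertiesOfCompositions_n2}\eqref{PropertiesOfCompositions_n:Realization} applied to this composition, together with the elementary facts that $\functionANN(\ANNi_{p+q})=\id_{\R^{p+q}}$, that a parallelization realizes the Cartesian product of the realizations of its components, and that the affine glue ANNs realize the claimed affine maps.

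For item~\eqref{SumsOfANNS:item3} I would bound $\paramANN(\ANNh)$ by summing the layerwise parameter counts, using \cref{Lemma:PropertiesOfCompositions_n1}\eqref{PropertiesOfCompositions_n:Params} and \cref{Lemma:PropertiesOfCompositions_n3} for the composed pieces and the count $\sum_k(l_k+l_k')(l_{k-1}+l_{k-1}'+1)$ for the parallelization of ANNs with widths $(l_k)$ and $(l_k')$. The structural point is that every layer of $\ANNh$ either corresponds to a layer $l_k(l_{k-1}+1)$ of $\ANNf_1$ or $\ANNf_2$ with its width enlarged by at most $\hiddenDimId_p+\hiddenDimId_q\leq 2cm$, hence contributes at most a constant times $\max\{1,c^2\}m^2$ times the original term; or lies entirely inside a carry track and contributes at most $\hiddenDimId_\bullet(\hiddenDimId_\bullet+1)\leq 2\max\{1,c^2\}m^2$; or is one of the $O(1)$ junction and glue layers, which have one side of width at most $p+q\leq 2m$ and so contribute at most $O(\max\{1,c^2\}m^2)$ times $\paramANN(\ANNf_1)+\paramANN(\ANNf_2)$. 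Summing over the $L_1+L_2$ layers and absorbing the total carry-track cost $O(\max\{1,c^2\}m^2(L_1+L_2))$ into $O(\max\{1,c^2\}m^2(\paramANN(\ANNf_1)+\paramANN(\ANNf_2)))$ via $L_i\leq\tfrac12\paramANN(\ANNf_i)$, and then collecting the universal constants, should yield the stated bound with the factor $11$.

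The main obstacle is exactly this parameter accounting. A naive parallelization of $\ANNf_1$ and $\ANNf_2$ (duplicate the input, stack the two networks block-diagonally, recombine) produces an interior layer whose weight matrix has size roughly $(\text{width of }\ANNf_1)\times(\text{width of }\ANNf_2)$, whose parameter count cannot be controlled by $m^2(\paramANN(\ANNf_1)+\paramANN(\ANNf_2))$; this is precisely why the hypotheses provide the cheap identity ANNs $\ANNi_d$, and why the data must be threaded so that at every layer at least one of the two half-computations is thin (width $\leq cm$) and the single transition is split across two narrow-on-one-side layers. A complete proof must additionally treat the degenerate cases $L_1=L_2$, $L_1=1$, or $L_2=1$ (where some carry chains are trivial and the composition formula in \cref{Definition:ANNcomposition} collapses to its length-one branches) and check that folding the affine duplication, rescaling, and summation maps into adjacent layers never enlarges a weight matrix.
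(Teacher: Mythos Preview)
Your sequential-with-carry-tracks construction is sound and would establish items~\eqref{SumsOfANNS:item1}--\eqref{SumsOfANNS:item2} and a parameter bound of the correct shape $C\max\{1,c^2\}m^2(\paramANN(\ANNf_1)+\paramANN(\ANNf_2))$ for some absolute constant $C$. Your diagnosis of why a naive same-depth block-diagonal parallelization fails (the cross terms $l_{1,k}l_{2,k-1}$ are not controlled by the sum of the parameter counts) is exactly right, and inserting $\ANNi_{p+q}$ at the junction to keep one side of every merged layer thin is the correct fix.

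The paper, however, takes a very different and much shorter route: it simply cites an external parallelization result, Cheridito et al.~\cite[Proposition~II.5]{Cheridito2019}, which directly produces an ANN $\mathscr{p}$ with $\functionANN(\mathscr{p})(x,y)=(\functionANN(\ANNf_1)(x),\functionANN(\ANNf_2)(y))$ and the explicit bound $\paramANN(\mathscr{p})\leq(\tfrac{11}{16}(\max\{2,c\})^2(2m)^2-1)(\paramANN(\ANNf_1)+\paramANN(\ANNf_2))$. The whole proof is then three lines: sandwich $\mathscr{p}$ between the one-layer affine ANNs $\mathcal{T}\colon x\mapsto(x,x)$ and $\mathcal{S}\colon(y,z)\mapsto\lambda_1 y+\lambda_2 z$, invoke \cref{Lemma:PropertiesOfCompositions_n2}\eqref{PropertiesOfCompositions_n:Realization} for the realization, and observe via \cref{Lemma:PropertiesOfCompositions_n1}\eqref{PropertiesOfCompositions_n:Dims} that composing with length-one ANNs whose outer dimension does not exceed the inner one cannot increase the parameter count, so $\paramANN(\mathcal{S}\compANNbullet\mathscr{p}\compANNbullet\mathcal{T})\leq\paramANN(\mathscr{p})$. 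The specific constant $11$ thus falls out of the cited bound, not from any bookkeeping done here. Your approach is self-contained and arguably reconstructs what the cited proposition does internally, but recovering the sharp constant $11$ from your layer-by-layer accounting would require tighter estimates than the ones you sketch; as written you would likely land on a larger absolute constant.
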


\begin{proof}%
\cref{Lemma:SumsOfANNS} is a simple consequence of the parallelization of ANNs presented in Cheridito et al.\ \cite[Proposition II.5]{Cheridito2019}. See, e.g., \cite[Proposition 2.9]{Beneventano2020v1} for a detailed proof.
\end{proof}

\subsection{ANN emulations for Euler steps}
\label{subsect:ANN_Euler}

\cfclear
\begin{prop}[ANNs for Euler steps]\label{Lemma:EulerWithANNS}
	Let 
	$a\in C(\R,\R)$, 
	$c \in (0, \infty)$, 
	$\delta \in \R$,
	$\ANNf \in \ANNs$, 
	$(\hiddenDimId_{d})_{d \in \N} \subseteq \N$, 
	$(\ANNi_{d})_{d \in \N} \subseteq \ANNs$  satisfy for all 
		$d \in \N$, $x\in\R^{d}$ 
	that  
	$\inDimANN(\ANNf)=\outDimANN(\ANNf)$,
	$\dims(\ANNi_{d}) = (d,\hiddenDimId_{d},d)$, $\hiddenDimId_{d} \leq c d$, and
	$(\functionANN(\ANNi_{d}))(x)=x$
	\cfload.
	Then 
	there exists $\ANNh\in \ANNs$ 
 such that
 \begin{enumerate}[(i)]
 	\item \label{EulerWithANNS:Continuity} it holds that $\functionANN (\ANNh)\in C(\R^{\inDimANN(\ANNf)},\R^{\inDimANN(\ANNf)})$,
 	\item \label{EulerWithANNS:Realization} it holds for all
 	$x\in\R^{\inDimANN(\ANNf)}$ that  
 	$
 	    (\functionANN (\ANNh))(x)= x + \delta (\functionANN (\ANNf))(x)
 	$, and
 	\item \label{EulerWithANNS:Params1} it holds that 
	$
 	\paramANN(\ANNh)
 	 \leq 
 	 11 \max \{1, c^2\} (\inDimANN(\ANNf))^2 \paramANN(\ANNi_{\inDimANN(\ANNf)}) \paramANN(\ANNf)
 	\leq
 	44 \max \{1, c^3\} (\inDimANN(\ANNf))^4 \paramANN(\ANNf)
 	$.
 \end{enumerate}	
\end{prop}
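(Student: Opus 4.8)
The plan is to realize the Euler step $x \mapsto x + \delta(\functionANN(\ANNf))(x)$ as the affine combination $1 \cdot (\functionANN(\ANNi_{\inDimANN(\ANNf)}))(x) + \delta \cdot (\functionANN(\ANNf))(x)$ and to read off all three assertions from \cref{Lemma:SumsOfANNS}. First I would abbreviate $d = \inDimANN(\ANNf)$ and note that the assumption $\inDimANN(\ANNf) = \outDimANN(\ANNf)$ together with $\dims(\ANNi_{d}) = (d, \hiddenDimId_{d}, d)$ yields $\inDimANN(\ANNi_{d}) = d = \inDimANN(\ANNf)$ and $\outDimANN(\ANNi_{d}) = d = \outDimANN(\ANNf)$, so that the ANNs $\ANNi_{d}$ and $\ANNf$ have matching input dimensions and matching output dimensions, which is precisely the compatibility hypothesis required by \cref{Lemma:SumsOfANNS}.

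I would then invoke \cref{Lemma:SumsOfANNS} with the activation function $a$, the constant $c$, and the family $(\ANNi_{d})_{d\in\N}$ as given, with the scalars $\lambda_1 = 1$ and $\lambda_2 = \delta$, and with the two summand ANNs chosen as $\ANNi_{d}$ and $\ANNf$. This provides some $\ANNh \in \ANNs$ such that $\functionANN(\ANNh) \in C(\R^{d}, \R^{d})$, such that for all $x \in \R^{d}$ it holds, by the hypothesis $(\functionANN(\ANNi_{d}))(x) = x$, that $(\functionANN(\ANNh))(x) = (\functionANN(\ANNi_{d}))(x) + \delta(\functionANN(\ANNf))(x) = x + \delta(\functionANN(\ANNf))(x)$, and such that $\paramANN(\ANNh) \leq 11 \max\{1, c^2\} d^2 (\paramANN(\ANNi_{d}) + \paramANN(\ANNf))$. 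The first two of these statements are precisely items~\eqref{EulerWithANNS:Continuity} and~\eqref{EulerWithANNS:Realization}.

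It then remains only to convert this additive parameter bound into the two multiplicative bounds in item~\eqref{EulerWithANNS:Params1}. For the first of them I would use that every element of $\ANNs$ has at least two parameters, so that $\paramANN(\ANNi_{d}) \geq 2$ and $\paramANN(\ANNf) \geq 2$, and combine this with the elementary inequality $\alpha + \beta \leq \alpha\beta$, which holds for all $\alpha, \beta \in [2, \infty)$, to obtain $\paramANN(\ANNi_{d}) + \paramANN(\ANNf) \leq \paramANN(\ANNi_{d})\paramANN(\ANNf)$. For the second I would compute from $\dims(\ANNi_{d}) = (d, \hiddenDimId_{d}, d)$ and $\hiddenDimId_{d} \leq cd$ that $\paramANN(\ANNi_{d}) = \hiddenDimId_{d}(d+1) + d(\hiddenDimId_{d}+1) = 2d\hiddenDimId_{d} + \hiddenDimId_{d} + d \leq 2cd^2 + cd + d \leq (3c+1)d^2 \leq 4\max\{1,c\}d^2$, and then, using the identity $\max\{1,c^2\}\max\{1,c\} = \max\{1,c^3\}$, conclude that $\paramANN(\ANNh) \leq 44\max\{1,c^3\} d^4 \paramANN(\ANNf)$.

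I do not anticipate a genuine obstacle here: essentially all of the work is already contained in \cref{Lemma:SumsOfANNS}, and the only points that need any care are verifying the dimension compatibility of $\ANNi_{d}$ and $\ANNf$ before that proposition can be applied and carrying out the trivial case distinctions $c \leq 1$ versus $c > 1$ that underlie the inequalities $(3c+1) \leq 4\max\{1,c\}$ and $\max\{1,c^2\}\max\{1,c\} = \max\{1,c^3\}$ used in the final step.
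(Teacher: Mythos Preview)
Your proposal is correct and follows essentially the same approach as the paper: both apply \cref{Lemma:SumsOfANNS} with $\lambda_1=1$, $\lambda_2=\delta$ to the pair $(\ANNi_{\inDimANN(\ANNf)},\ANNf)$, then upgrade the additive parameter bound to a multiplicative one via $\alpha+\beta\leq\alpha\beta$ for $\alpha,\beta\geq 2$ and the estimate $\paramANN(\ANNi_d)\leq 4\max\{1,c\}d^2$.
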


\begin{proof}%
\cref{Lemma:EulerWithANNS} is a consequence of \cref{Lemma:SumsOfANNS}. See, e.g., \cite[Proposition 2.10]{Beneventano2020v1} for a detailed proof.
\end{proof}

\section{Approximation spaces of artificial neural networks}
\label{sect:AS}

It is the subject of this section to introduce the concept of \emph{approximation spaces of artificial neural networks} (see \cref{Def:approximation_spaces_norates,Def:approximation_spaces_GH}) and to develop a machinery to handle those spaces. 
Unlike the simplified approximation spaces defined in the introduction of this paper (see \cref{Def:AS_intro} above) which consisted of sequences of functions, elements of the general approximation spaces 
will be what we call \emph{function families} and define in \cref{subsect:functionfamilies}.
Function families are collections of functions indexed over some index set $I$.
In order to have a notion of dimensionality for function families %
we present the concept of \emph{dimension mappings} in \cref{subsect:dimmappings}.  
Dimension mappings are functions $\mathfrak{d} \colon I \to \N^N$ for some $N \in \N$, which assign to each index $i \in I$ a suitable vector $\mathfrak{d}(i) \in \N^N$ encoding the dimensionality of the $i$-th function of function families on $I$.
With function families and dimension mappings at hand we define approximation spaces and discuss some of their elementary properties in \cref{subsect:AS}.

The remaining \cref{subsect:compositions,subsect:linear_comb,subsect:Euler,subsect:limits} are devoted to the development of a theory for approximation spaces.
In \cref{subsect:linear_comb} we demonstrate that linear combinations of elements of approximation spaces are again contained in approximation spaces thereby implying that approximation spaces can be given the structure of a vector space.
In \cref{subsect:limits} we show that
limits of function families in approximation spaces are again contained in approximation spaces.
In \cref{subsect:compositions} we reveal that
infinite compositions of function families in approximation spaces are again contained in approximation spaces.
Finally, in \cref{subsect:Euler} we show that
function families consisting of Euler approximations induced by function families in an approximation space, are again contained in approximation spaces.

\subsection{Function families}
\label{subsect:functionfamilies}
\subsubsection{Definition of function families}

\begin{definition}[Function families]
\label{Def:functionfamily}
We say that $f$ is a function family on $I$ if and only if
it holds
\begin{enumerate}[(i)]
\item \label{Def:functionfamily:item1}
that $I$ is a non-empty set and
\item \label{Def:functionfamily:item2}
that
$f \colon I \to \mycup_{k,l \in \N} C(\R^k, \R^l)$ is a function from $I$ to $ \mycup_{k,l \in \N} C(\R^k, \R^l)$.
\end{enumerate}
\end{definition}

\cfclear
\newcommand{\setofFF}{\cfadd{Def:set_of_functionfamilies}\mathfrak{F}}
\begin{definition}[Sets of all function families]
\label{Def:set_of_functionfamilies}
Let $I$ be a non-empty set.
Then we denote by $\setofFF_I$ the set of all \cfadd{Def:functionfamily}function families on $I$ (cf.\ \cref{Def:functionfamily}).
\end{definition}

\subsubsection{Operations on function families}

\newcommand{\compofstruct}{\cfadd{composition_of_FF_def} \circledcirc}
\cfclear
\begin{definition}[Compositions of function families]
\label{composition_of_FF_def}
Let $I$ be a non-empty set, let $f = (f_i)_{i \in I}$, $g = (g_i)_{i \in I} \in \setofFF_I$,
and assume  
for all $i \in I$ that $\domain(f_i) = \codomain(g_i)$ \cfload.
Then we denote by $ f \compofstruct g \in \setofFF_I$
the \cfadd{Def:functionfamily}function family on $I$ which satisfies 
for all $i \in I$ that 
\begin{equation}
  \left( f \compofstruct g \right)_i 
=
  f_i \circ g_i
\end{equation}
(cf.\ \cref{Def:set_of_functionfamilies}).
\end{definition}

\newcommand{\prodofstruct}{\cfadd{product_of_functionfamilies}\mathop{\boxtimes}}
\cfclear
\begin{definition}[Products of function families]
\label{product_of_functionfamilies}
Let $I$ and $J$ be non-empty sets and
for every $j \in J$ let $f^j = (f^j_i)_{i \in I} \in \setofFF_I$ \cfload.
Then we denote by 
$ \prodofstruct_{j \in J} \! f^{j} \in \setofFF_{I \times J}$ the \cfadd{Def:functionfamily}function family on $I \times J$ which satisfies 
for all $\mathbf{i} \in I$, $\mathbf{j} \in J$ that 
\begin{equation}
  {\textstyle \big( \prodofstruct_{j \in J} f^{j} \big)_{\mathbf{i}, \mathbf{j}} }
=
  f^{\mathbf{j}}_\mathbf{i}
\end{equation}
(cf.\ \cref{Def:set_of_functionfamilies}).
\end{definition}

\subsection{Dimension mappings}
\label{subsect:dimmappings}

\subsubsection{Definition of dimension mappings}

\begin{definition}[$N$-fold dimension mappings]
\label{dimension_mapping_1}
We say that $\mathfrak{d}$ is an $N$-fold dimension-mapping on $I$ if and only if
it holds 
\begin{enumerate}[(i)]
\item \label{dimension_mapping_1:item1}
that $I$ is a non-empty set,

\item \label{dimension_mapping_1:item2}
that $N \in \N$ is a natural number, and

\item \label{dimension_mapping_1:item3}
that $\mathfrak{d} \colon I \to \N^N$ is a function from $I$ to $ \N^N$.
\end{enumerate}
\end{definition}

\subsubsection{Operations on dimension mappings}

\newcommand{\proddimmap}{\cfadd{product_of_dimension_mappings}\otimes}
\cfclear
\begin{definition}[Product of dimension-mappings]
\label{product_of_dimension_mappings}
Let $\mathfrak{d} = (\mathfrak{d}_1, \mathfrak{d}_2, \ldots, \mathfrak{d}_N)$ be an $N$-fold \linebreak dimension-mapping\cfadd{dimension_mapping_1} on $I$ and 
let $\mathsf{d} = (\mathsf{d}_1, \mathsf{d}_2, \ldots, \mathsf{d}_M)$ be an $M$-fold dimension-mapping\cfadd{dimension_mapping_1} on $J$ \cfload.
Then we denote by 
$\mathfrak{d} \proddimmap \mathsf{d} \colon I \times J \to  \N^{N+M}$ 
the $(N+M)$-fold dimension mapping\cfadd{dimension_mapping_1} on $I \times J$ which satisfies 
for all $i \in I$, $j \in J$ that
\begin{equation}
  (\mathfrak{d} \proddimmap \mathsf{d})(i,j) 
=
  (\mathfrak{d}_1(i), \mathfrak{d}_2(i), \ldots, \mathfrak{d}_N(i), \mathsf{d}_1(j), \mathsf{d}_2(j), \ldots, \mathsf{d}_M(j)) \in \N^{N+M}.
\end{equation}
\end{definition}

\subsection{Approximation spaces}
\label{subsect:AS}

\subsubsection{Definition of approximation spaces}

\begin{definition}[Standard norm]
\label{euclidean_norm_def}
We denote by $\newnorm{\cdot} \colon (\mycup_{d \in \N}\R^d) \to [0,\infty)$ the 
function which satisfies for all $d \in \N$, $x = (x_1, x_2, \ldots, x_d) \in \R^d$ that
$
    \norm{x} = \big[ \sum_{i = 1}^d |x_i|^2 \big]^{\nicefrac{1}{2}}
$.
\end{definition}

\newcommand{\ASrates}[2]{\cfadd{Def:approximation_spaces_GH} \mathcal{W}_{#1 , \infty, \infty}^{#2}}

\newcommand{\ASGH}{{\cfadd{Def:approximation_spaces_GH} \mathcal{W}}}
\cfclear
\begin{definition}[Approximation spaces]
\label{Def:approximation_spaces_GH}
Let $\mathfrak{d} = (\mathfrak{d}_1, \mathfrak{d}_2, \ldots, \mathfrak{d}_N)$ be an N-fold dimension mapping\cfadd{dimension_mapping_1} on $I$, 
let 
	$r_0, r_1, \ldots, r_N \in \R$, 
	$a \in C(\R, \R)$, 
	$G = (G_{i,\varepsilon})_{(i,\varepsilon) \in I\times (0,1]} \subseteq (0, \infty]$, 
	$H = (H_{i,\varepsilon})_{(i,\varepsilon) \in I\times (0,1]} \subseteq (0, \infty]$,
and let $\mathfrak{w} \colon [0, \infty) \to [0, \infty)$ be a function
 \cfload.
Then we denote by 
$ \ASGH_{a, \mathfrak{w}, G, H }^{r_0, r_1, \ldots, r_N}(I, \mathfrak{d}) $
the set given by
\begin{equation}
\label{approximation_spaces_GH:eq1}
\begin{split}
    &\ASGH_{a, \mathfrak{w}, G, H }^{r_0, r_1, \ldots, r_N}(I, \mathfrak{d}) \cfclear
    = \left\{ f = (f_i)_{i \in I}
    \in \setofFF_I \colon \left[
    \begin{array}{c}
    	\exists \, K \in \R \colon 
    	\forall \, i \in I, \varepsilon \in (0,1] \colon \\
    	\exists \, \ANNf \in \ANNs, \mathfrak{i}, \mathfrak{o} \in \N \colon 
    	\forall \, x \in \R^{\mathfrak{i}}\colon \\
    \left(\begin{array}{l}
    	\{f_i, \functionANN(\ANNf)\} \subseteq C(\R^{\mathfrak{i}}, \R^{\mathfrak{o}}),\\
	    \param(\ANNf) 
	    \leq K \varepsilon^{-r_0}
	    \prod_{l= 1}^N |\mathfrak{d}_l(i)|^{r_l},\\ 
	     \mathfrak{w}( \norm{ x })
	    \norm{ f_i(x) - ( \functionANN(\ANNf) ) (x) }  \leq \varepsilon,\\
	    \norm{( \functionANN(\ANNf) ) (x)}
	    \leq G_{i,\varepsilon} (H_{i,\varepsilon} + \Norm{x})
    \end{array} \right)
    \end{array}
    \right] \right\}
\end{split}
\end{equation}
\cfout.
\end{definition}

\cfclear
\begin{lemma}[Approximation spaces without linear growth conditions]
\label{ASRates_deflemma}
Let $\mathfrak{d} = (\mathfrak{d}_1, \mathfrak{d}_2, \ldots, \mathfrak{d}_N)$ be an N-fold dimension mapping\cfadd{dimension_mapping_1} on $I$, let $r_0, r_1, \ldots, r_N \in \R$,  
$a \in C(\R, \R)$, and let $\mathfrak{w} \colon [0, \infty) \to [0, \infty)$ be a function \cfload.
Then 
\begin{equation}
\label{ASRates_deflemma:eq1}
\begin{split} 
	&\ASrates{a, \mathfrak{w}}{r_0, r_1, \ldots, r_N}(I, \mathfrak{d}) \\
&=
	\left\{ f = (f_i)_{i \in I}
	    \in \setofFF_I \colon \left[
	    \begin{array}{c}
	    \exists \, K \in \R \colon 
	    \forall \, i \in I, \varepsilon \in (0,1] \colon 
	    \exists \, \ANNf \in \ANNs, \mathfrak{i}, \mathfrak{o} \in \N \colon \\
	    \left(\begin{array}{l}
	    \{f_i, \functionANN(\ANNf)\} \subseteq C(\R^{\mathfrak{i}}, \R^{\mathfrak{o}}),\\
	    \param(\ANNf) 
	    \leq K \varepsilon^{-r_0}
	     \prod_{l= 1}^N |\mathfrak{d}_l(i)|^{r_l},\\
	    \sup_{x \in \R^{\mathfrak{i}}} 
	    (\mathfrak{w}( \norm{ x })
	    \norm{ f_i(x) - ( \functionANN(\ANNf) ) (x) })  \leq \varepsilon
	    \end{array} \right)
	    \end{array}
	    \right] \right\}
\end{split}
\end{equation}
\cfout.
\end{lemma}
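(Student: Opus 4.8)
The plan is to prove the asserted set equality \eqref{ASRates_deflemma:eq1} by a direct comparison of the two descriptions, observing that the right-hand side of \eqref{ASRates_deflemma:eq1} is exactly $\ASGH_{a, \mathfrak{w}, \infty, \infty}^{r_0, r_1, \ldots, r_N}(I, \mathfrak{d})$ rewritten with the quantifier over $x$ pulled past the $x$-independent clauses and the pointwise error bound replaced by a supremum. Concretely, fix the $N$-fold dimension mapping $\mathfrak{d}$, the reals $r_0, r_1, \ldots, r_N$, the activation $a \in C(\R, \R)$, and the weight $\mathfrak{w}$. There are only two differences to address: first, in \cref{Def:approximation_spaces_GH} the universal quantifier "$\forall\, x \in \R^{\mathfrak{i}}$" stands in front of the block containing $\{f_i, \functionANN(\ANNf)\} \subseteq C(\R^{\mathfrak{i}}, \R^{\mathfrak{o}})$, the parameter bound, the weighted error bound, and the linear growth bound, whereas in \eqref{ASRates_deflemma:eq1} only the weighted error appears under a supremum over $x$; second, \eqref{ASRates_deflemma:eq1} omits the linear growth bound $\norm{(\functionANN(\ANNf))(x)} \leq G_{i,\varepsilon}(H_{i,\varepsilon} + \norm{x})$ entirely, which is permissible because with $G_{i,\varepsilon} = H_{i,\varepsilon} = \infty$ and the standard conventions on $[0,\infty]$ (so $\infty + t = \infty$, $\infty \cdot \infty = \infty$, and $s \leq \infty$ for every $s \in [0,\infty]$) that bound holds for all $x$ automatically.

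For the inclusion "$\subseteq$", I would take $f = (f_i)_{i \in I} \in \ASrates{a, \mathfrak{w}}{r_0, r_1, \ldots, r_N}(I, \mathfrak{d})$ and unfold \cref{Def:approximation_spaces_GH}: there is a $K \in \R$ such that for all $i \in I$ and $\varepsilon \in (0,1]$ there exist $\ANNf \in \ANNs$ and $\mathfrak{i}, \mathfrak{o} \in \N$ with $\{f_i, \functionANN(\ANNf)\} \subseteq C(\R^{\mathfrak{i}}, \R^{\mathfrak{o}})$, with $\param(\ANNf) \leq K\varepsilon^{-r_0}\prod_{l=1}^N |\mathfrak{d}_l(i)|^{r_l}$, and with $\mathfrak{w}(\norm{x})\norm{f_i(x) - (\functionANN(\ANNf))(x)} \leq \varepsilon$ for every $x \in \R^{\mathfrak{i}}$. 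Since $\R^{\mathfrak{i}} \neq \emptyset$, the last property says exactly that $\varepsilon$ is an upper bound for $\{\mathfrak{w}(\norm{x})\norm{f_i(x) - (\functionANN(\ANNf))(x)} \colon x \in \R^{\mathfrak{i}}\}$, hence $\sup_{x \in \R^{\mathfrak{i}}}(\mathfrak{w}(\norm{x})\norm{f_i(x) - (\functionANN(\ANNf))(x)}) \leq \varepsilon$; so the same $K$ and the same triples $(\ANNf, \mathfrak{i}, \mathfrak{o})$ witness membership of $f$ in the right-hand side of \eqref{ASRates_deflemma:eq1}.

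For the reverse inclusion "$\supseteq$", I would run the argument backwards: from $f$ in the right-hand side of \eqref{ASRates_deflemma:eq1}, with constant $K$ and triples $(\ANNf, \mathfrak{i}, \mathfrak{o})$, the bound $\sup_{x \in \R^{\mathfrak{i}}}(\mathfrak{w}(\norm{x})\norm{f_i(x) - (\functionANN(\ANNf))(x)}) \leq \varepsilon$ yields $\mathfrak{w}(\norm{x})\norm{f_i(x) - (\functionANN(\ANNf))(x)} \leq \varepsilon$ for every $x \in \R^{\mathfrak{i}}$, and $\norm{(\functionANN(\ANNf))(x)} \leq \infty = \infty \cdot (\infty + \norm{x})$ holds trivially, so all four clauses defining $\ASGH_{a, \mathfrak{w}, \infty, \infty}^{r_0, r_1, \ldots, r_N}(I, \mathfrak{d}) = \ASrates{a, \mathfrak{w}}{r_0, r_1, \ldots, r_N}(I, \mathfrak{d})$ hold for all $x$, giving $f \in \ASrates{a, \mathfrak{w}}{r_0, r_1, \ldots, r_N}(I, \mathfrak{d})$. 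I do not expect a genuine obstacle: the statement is a pure rewriting of definitions, and the only two points that deserve an explicit word are that moving "$\forall\, x$" across the $x$-free clauses $\{f_i, \functionANN(\ANNf)\} \subseteq C(\R^{\mathfrak{i}}, \R^{\mathfrak{o}})$ and the parameter bound changes nothing since $\R^{\mathfrak{i}}$ is non-empty, and that the extended-real arithmetic renders the linear growth bound with $G = H = \infty$ vacuous.
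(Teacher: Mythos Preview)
Your proposal is correct and follows exactly the approach of the paper, which simply records that \eqref{approximation_spaces_GH:eq1} implies \eqref{ASRates_deflemma:eq1}; you have merely spelled out in detail the two observations (vacuity of the growth bound with $G=H=\infty$ and the equivalence of the pointwise and supremum formulations) that the paper leaves implicit.
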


\begin{proof}%
Note that \eqref{approximation_spaces_GH:eq1} implies \eqref{ASRates_deflemma:eq1}.
The proof of \cref{ASRates_deflemma} is thus complete.
\end{proof}

\cfclear
\newcommand{\ASnorates}{\cfadd{Def:approximation_spaces_norates} \mathcalbb{W}}
\begin{definition}[Approximation spaces without linear growth conditions nor explicit rates]
\label{Def:approximation_spaces_norates}
Let $\mathfrak{d} = (\mathfrak{d}_1, \mathfrak{d}_2, \ldots, \mathfrak{d}_N)$ be an N-fold dimension mapping\cfadd{dimension_mapping_1} on $I$, let $a \in C(\R, \R)$, and let $\mathfrak{w} \colon [0, \infty) \to [0, \infty)$ be a function \cfload.
Then we denote by 
$
  \ASnorates_{a, \mathfrak{w} }(I,\mathfrak{d})
$
the set given by
\begin{equation}
\label{approximation_spaces_norates:eq1}
  \ASnorates_{a, \mathfrak{w} }(I,\mathfrak{d})
=
  \mathop{\cup}\limits_{r_0, r_1, \ldots, r_N \in \R} 
  \ASrates{a, \mathfrak{w} }{r_0, r_1, \ldots, r_N}(I,\mathfrak{d})
\end{equation}
(cf.\ \cref{Def:approximation_spaces_GH}).
\end{definition}

\cfclear
\begin{lemma}
\label{AS_norates_alternative_def}
Let $\mathfrak{d} = (\mathfrak{d}_1, \mathfrak{d}_2, \ldots, \mathfrak{d}_N)$ be an N-fold dimension mapping\cfadd{dimension_mapping_1} on $I$, let $a \in C(\R, \R)$, and let $\mathfrak{w} \colon [0, \infty) \to [0, \infty)$ be a function \cfload.
Then
\begin{equation}
\label{AS_norates_alternative_def:concl}
\begin{split} 
	\ASnorates_{a, \mathfrak{w} }(I,\mathfrak{d})
	=
	  \left\{ f = (f_i)_{i \in I}
	  	    \in \setofFF_I \colon \left[
	  	    \begin{array}{c}
	  	    \exists \, K \in \R \colon 
	  	    \forall \, i \in I, \varepsilon \in (0,1] \colon 
	  	    \exists \, \ANNf \in \ANNs, \mathfrak{i}, \mathfrak{o} \in \N \colon \\
	  	    \left(\begin{array}{l}
	  	    \{f_i, \functionANN(\ANNf)\} \subseteq C(\R^{\mathfrak{i}}, \R^{\mathfrak{o}}),\\
	  	    \param(\ANNf) 
	  	    \leq K \varepsilon^{-K}
	  	     \prod_{l= 1}^N |\mathfrak{d}_l(i)|^{K},\\
	  	    \sup_{x \in \R^{\mathfrak{i}}} 
	  	    (\mathfrak{w}( \norm{ x })
	  	    \norm{ f_i(x) - ( \functionANN(\ANNf) ) (x) })  \leq \varepsilon
	  	    \end{array} \right)
	  	    \end{array}
	  	    \right] \right\}
\end{split}
\end{equation}
\cfout.
\end{lemma}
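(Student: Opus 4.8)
The plan is to read off the claim by combining the reformulation of $\ASrates{a, \mathfrak{w}}{r_0, r_1, \ldots, r_N}(I,\mathfrak{d})$ provided by \cref{ASRates_deflemma} with the fact that, by \eqref{approximation_spaces_norates:eq1}, $\ASnorates_{a, \mathfrak{w}}(I,\mathfrak{d})$ is nothing but the union $\cup_{r_0, r_1, \ldots, r_N \in \R} \ASrates{a, \mathfrak{w}}{r_0, r_1, \ldots, r_N}(I,\mathfrak{d})$. Write $\mathcal{A}$ for the set appearing on the right-hand side of \eqref{AS_norates_alternative_def:concl}, and prove the two inclusions $\mathcal{A} \subseteq \ASnorates_{a, \mathfrak{w}}(I,\mathfrak{d})$ and $\ASnorates_{a, \mathfrak{w}}(I,\mathfrak{d}) \subseteq \mathcal{A}$ separately.

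For $\mathcal{A} \subseteq \ASnorates_{a, \mathfrak{w}}(I,\mathfrak{d})$, I would take $f = (f_i)_{i \in I} \in \mathcal{A}$ and let $K \in \R$ witness the membership of $f$ in $\mathcal{A}$. The key observation is that the defining condition of $\mathcal{A}$ is word for word the defining condition in \eqref{ASRates_deflemma:eq1} specialised to $(r_0, r_1, \ldots, r_N) = (K, K, \ldots, K)$ and with the constant (also called $K$ there) chosen equal to this same $K$. Hence \cref{ASRates_deflemma} gives $f \in \ASrates{a, \mathfrak{w}}{K, K, \ldots, K}(I,\mathfrak{d})$, and therefore $f \in \ASnorates_{a, \mathfrak{w}}(I,\mathfrak{d})$ by \eqref{approximation_spaces_norates:eq1}.

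For the reverse inclusion, I would take $f = (f_i)_{i \in I} \in \ASnorates_{a, \mathfrak{w}}(I,\mathfrak{d})$. By \eqref{approximation_spaces_norates:eq1} there are $r_0, r_1, \ldots, r_N \in \R$ with $f \in \ASrates{a, \mathfrak{w}}{r_0, r_1, \ldots, r_N}(I,\mathfrak{d})$, and by \cref{ASRates_deflemma} there is $\tilde K \in \R$ such that for every $i \in I$ and $\varepsilon \in (0,1]$ there exist $\ANNf \in \ANNs$ and $\mathfrak{i}, \mathfrak{o} \in \N$ with $\{f_i, \functionANN(\ANNf)\} \subseteq C(\R^{\mathfrak{i}}, \R^{\mathfrak{o}})$, $\param(\ANNf) \leq \tilde K \varepsilon^{-r_0} \prod_{l=1}^N |\mathfrak{d}_l(i)|^{r_l}$, and $\sup_{x \in \R^{\mathfrak{i}}}(\mathfrak{w}(\norm{x})\norm{f_i(x) - (\functionANN(\ANNf))(x)}) \leq \varepsilon$. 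I would then set $K = \max\{\tilde K, r_0, r_1, \ldots, r_N, 0\}$. Since $0 < \varepsilon \leq 1$, the map $t \mapsto \varepsilon^{-t}$ is nondecreasing, so $\varepsilon^{-r_0} \leq \varepsilon^{-K}$; since each $|\mathfrak{d}_l(i)| \in \N$ satisfies $|\mathfrak{d}_l(i)| \geq 1$, the map $t \mapsto |\mathfrak{d}_l(i)|^t$ is nondecreasing, so $|\mathfrak{d}_l(i)|^{r_l} \leq |\mathfrak{d}_l(i)|^{K}$ for every $l$; combining these with $\tilde K \leq K$ and $K \geq 0$ yields $\param(\ANNf) \leq K \varepsilon^{-K} \prod_{l=1}^N |\mathfrak{d}_l(i)|^{K}$. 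As the continuity condition and the approximation condition are literally unchanged, $f$ satisfies the defining condition of $\mathcal{A}$ with this $K$, i.e.\ $f \in \mathcal{A}$. Combining the two inclusions then completes the proof.

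I do not anticipate a genuine obstacle here: the statement is a bookkeeping identity between a union over rate vectors and a single-constant formulation. The only place that deserves a sentence of care is the monotonicity step in the reverse inclusion, where the individual exponents $r_0, r_1, \ldots, r_N$ and the constant $\tilde K$ are absorbed into one common $K$; this relies precisely on $0 < \varepsilon \leq 1$ and on $|\mathfrak{d}_l(i)| \geq 1$, the latter being automatic because an $N$-fold dimension mapping takes values in $\N^N$.
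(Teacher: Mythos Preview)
Your proof is correct and follows essentially the same approach as the paper, which merely cites \eqref{approximation_spaces_GH:eq1} and \eqref{approximation_spaces_norates:eq1}; you simply spell out the two inclusions and the monotonicity step that the paper leaves implicit.
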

	
\begin{proof}%
Note that \eqref{approximation_spaces_GH:eq1} and \eqref{approximation_spaces_norates:eq1} establish \eqref{AS_norates_alternative_def:concl}.
The proof of \cref{AS_norates_alternative_def} is thus complete.
\end{proof}

\subsubsection{Elementary properties of approximation spaces}

\begin{lemma}[Characterization of approximation spaces]
\cfclear
\label{approximation_space_equivalenceGH}
Let $\mathfrak{d} = (\mathfrak{d}_1, \mathfrak{d}_2, \ldots, \mathfrak{d}_N)$ be an N-fold dimension mapping\cfadd{dimension_mapping_1} on $I$,
let 
	$r_0, r_1, \ldots, r_N \in \R$, 
	$a \in C(\R, \R)$, 
	$G = (G_{i,\varepsilon})_{(i,\varepsilon) \in I\times (0,1]} \subseteq (0, \infty]$, 
	$H = (H_{i,\varepsilon})_{(i,\varepsilon) \in I\times (0,1]} \subseteq (0, \infty]$,
	$f = (f_i)_{i \in I} \in \setofFF_I$,
and let $\mathfrak{w} \colon [0, \infty) \to [0, \infty)$,
$\inputStruc \colon I \to \N$, and $\outputStruc \colon I \to \N$
satisfy
for all $i \in I$ that $f_i \in C(\R^{\inputStruc(i)}, \R^{\outputStruc(i)})$
\cfload.
Then the following two statements are equivalent:
\begin{enumerate}[(i)]
\item 
\label{approximation_space_equivalenceGH:item1}
It holds that 
$f \in \ASGH_{a, \mathfrak{w}, G, H }^{r_0, r_1, \ldots, r_N}(I, \mathfrak{d})$ \cfload.

\item 
\label{approximation_space_equivalenceGH:item2}
There exist $K \in [0,\infty)$, $(\ANNf_{i,\varepsilon})_{(i, \varepsilon) \in I \times (0,1]} \subseteq \ANNs$ such that
for all $i \in I$, $\varepsilon \in (0,1]$, $x \in \R^{\inputStruc(i)}$ it holds that
\begin{equation}
\label{approximation_space_equivalenceGH:concl1}
  \param(\ANNf_{i,\varepsilon}) 
\leq
  K \varepsilon^{-r_0}
  \smallprod{l}{1}{N} |\mathfrak{d}_l(i)|^{r_l},
\qquad
    \functionANN(\ANNf_{i,\varepsilon}) \in C(\R^{\inputStruc(i)}, \R^{\outputStruc(i)} ),
\end{equation}
\begin{equation}
\label{approximation_space_equivalenceGH:concl3}
    \mathfrak{w}( \norm{ x })
    \norm{ f_i(x) - ( \functionANN(\ANNf_{i,\varepsilon}) ) (x) }  \leq \varepsilon,
\qandq
	\norm{( \functionANN(\ANNf_{i,\varepsilon}) ) (x)} \leq G_{i,\varepsilon} (H_{i,\varepsilon} + \norm{x})
\end{equation}
\cfload.
\end{enumerate}
\end{lemma}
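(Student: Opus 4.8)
The statement is essentially a repackaging of \cref{Def:approximation_spaces_GH}, and the plan is to prove the two implications by unwinding the right-hand side of \eqref{approximation_spaces_GH:eq1} and matching up quantifiers. The only steps with genuine content are: (a) an appeal to the axiom of choice which converts the ``for all $(i,\varepsilon)$ there exists $\ANNf$'' in \eqref{approximation_spaces_GH:eq1} into a single family $(\ANNf_{i,\varepsilon})_{(i,\varepsilon)\in I\times(0,1]}$; (b) the observation that the dimensions $\mathfrak{i},\mathfrak{o}\in\N$ occurring in \eqref{approximation_spaces_GH:eq1} are necessarily equal to $\inputStruc(i)$ and $\outputStruc(i)$; and (c) the remark that the constant $K$ in \eqref{approximation_spaces_GH:eq1} is automatically positive, hence lies in $[0,\infty)$.

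For the implication (i)~$\Rightarrow$~(ii), suppose $f\in\ASGH_{a,\mathfrak{w},G,H}^{r_0,r_1,\ldots,r_N}(I,\mathfrak{d})$ and let $K\in\R$ be as granted by \eqref{approximation_spaces_GH:eq1}. For each $(i,\varepsilon)\in I\times(0,1]$ the set of triples $(\ANNf,\mathfrak{i},\mathfrak{o})\in\ANNs\times\N\times\N$ satisfying the four conditions displayed in \eqref{approximation_spaces_GH:eq1} is non-empty, so the axiom of choice yields a choice of such a triple for every $(i,\varepsilon)$; denote its ANN entry by $\ANNf_{i,\varepsilon}$. The first of the four conditions forces $f_i\in C(\R^{\mathfrak{i}},\R^{\mathfrak{o}})$, and since $f_i$ also lies in $C(\R^{\inputStruc(i)},\R^{\outputStruc(i)})$ by hypothesis and cannot lie in two of the (pairwise distinct) spaces $C(\R^k,\R^l)$, it follows that $\mathfrak{i}=\inputStruc(i)$ and $\mathfrak{o}=\outputStruc(i)$. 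Hence $\functionANN(\ANNf_{i,\varepsilon})\in C(\R^{\inputStruc(i)},\R^{\outputStruc(i)})$, and the parameter bound together with the two remaining estimates are exactly \eqref{approximation_space_equivalenceGH:concl1} and \eqref{approximation_space_equivalenceGH:concl3}. Since $\param(\ANNf_{i,\varepsilon})\in\N$ (so $\param(\ANNf_{i,\varepsilon})\geq 1$) while $\varepsilon^{-r_0}\prod_{l=1}^N|\mathfrak{d}_l(i)|^{r_l}>0$, the parameter bound forces $K>0$; in particular $K\in[0,\infty)$, which establishes (ii).

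For the converse (ii)~$\Rightarrow$~(i), assume (ii) holds with constant $K\in[0,\infty)$ and family $(\ANNf_{i,\varepsilon})_{(i,\varepsilon)\in I\times(0,1]}\subseteq\ANNs$. Given $(i,\varepsilon)\in I\times(0,1]$ I would use $\ANNf=\ANNf_{i,\varepsilon}$, $\mathfrak{i}=\inputStruc(i)$, and $\mathfrak{o}=\outputStruc(i)$ as the witnesses in \eqref{approximation_spaces_GH:eq1}: the hypothesis $f_i\in C(\R^{\inputStruc(i)},\R^{\outputStruc(i)})$ and $\functionANN(\ANNf_{i,\varepsilon})\in C(\R^{\inputStruc(i)},\R^{\outputStruc(i)})$ from \eqref{approximation_space_equivalenceGH:concl1} give the first condition, and \eqref{approximation_space_equivalenceGH:concl1} and \eqref{approximation_space_equivalenceGH:concl3} give the remaining three, which hold for all $x\in\R^{\inputStruc(i)}$, matching the ``$\forall\,x$'' in \eqref{approximation_spaces_GH:eq1}. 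Since this single $K$ works for every $i$ and $\varepsilon$, we obtain $f\in\ASGH_{a,\mathfrak{w},G,H}^{r_0,r_1,\ldots,r_N}(I,\mathfrak{d})$, i.e.\ (i). I do not expect any real obstacle here: the only point deserving care is the identification $\mathfrak{i}=\inputStruc(i)$, $\mathfrak{o}=\outputStruc(i)$ (which relies on a continuous function $\R^k\to\R^l$ determining $k$ and $l$) together with the routine use of choice; the rest is pure quantifier bookkeeping.
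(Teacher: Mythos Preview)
Your proof is correct and takes essentially the same approach as the paper: both simply unwind the defining condition \eqref{approximation_spaces_GH:eq1}. The paper's own proof is a single sentence (``Note that \eqref{approximation_spaces_GH:eq1} establishes that (\eqref{approximation_space_equivalenceGH:item1} $\Leftrightarrow$ \eqref{approximation_space_equivalenceGH:item2})''), whereas you spell out the choice of the family $(\ANNf_{i,\varepsilon})$, the identification $\mathfrak{i}=\inputStruc(i)$, $\mathfrak{o}=\outputStruc(i)$, and the positivity of $K$ explicitly.
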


\begin{proof}%
Note that \eqref{approximation_spaces_GH:eq1} establishes that (\eqref{approximation_space_equivalenceGH:item1} $\Leftrightarrow$ \eqref{approximation_space_equivalenceGH:item2}).
The proof of \cref{approximation_space_equivalenceGH} is thus complete.
\end{proof}

\begin{lemma}[Characterization of approximation spaces without linear growth conditions]
\cfclear
\label{approximation_space_equivalence}
Let $\mathfrak{d} = (\mathfrak{d}_1, \mathfrak{d}_2, \ldots, \mathfrak{d}_N)$ be an N-fold dimension mapping\cfadd{dimension_mapping_1} on $I$,
let $r_0, r_1, \ldots, r_N \in \R$, $a \in C(\R, \R)$, $f = (f_i)_{i \in I} \in \setofFF_I$,
and let $\mathfrak{w} \colon [0, \infty) \to [0, \infty)$,
$\inputStruc \colon I \to \N$, and $\outputStruc \colon I \to \N$
satisfy
for all $i \in I$ that $f_i \in C(\R^{\inputStruc(i)}, \R^{\outputStruc(i)})$
\cfload.
Then the following two statements are equivalent:
\begin{enumerate}[(i)]
\item 
\label{approximation_space_equivalence:item1}
It holds that 
$f \in \ASrates{a, \mathfrak{w}}{r_0, r_1, \ldots, r_N}(I, \mathfrak{d})$ \cfload.

\item 
\label{approximation_space_equivalence:item2}
There exist $K \in [0,\infty)$, $(\ANNf_{i,\varepsilon})_{(i, \varepsilon) \in I \times (0,1]} \subseteq \ANNs$ such that
for all $i \in I$, $\varepsilon \in (0,1]$ it holds that
\begin{equation}
\label{approximation_space_equivalence:concl1}
  \param(\ANNf_{i,\varepsilon}) 
\leq
  K \varepsilon^{-r_0}
  \smallprod{l}{1}{N} |\mathfrak{d}_l(i)|^{r_l},
\qquad
    \functionANN(\ANNf_{i,\varepsilon}) \in C(\R^{\inputStruc(i)}, \R^{\outputStruc(i)} ),
\end{equation}
\begin{equation}
\label{approximation_space_equivalence:concl3}
\andq
    \sup\nolimits_{x \in \R^{\inputStruc(i)}} 
    \big( \mathfrak{w}( \norm{ x })
    \Norm{ f_i(x) - ( \functionANN(\ANNf_{i,\varepsilon})) (x) }  \big)  
\leq 
	\varepsilon
\end{equation}
\cfload.
\end{enumerate}
\end{lemma}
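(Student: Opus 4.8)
The plan is to unwind both statements into the definition furnished by \cref{ASRates_deflemma} and to observe that they are equivalent reformulations of one another. First I would recall, via \eqref{ASRates_deflemma:eq1}, that $f \in \ASrates{a, \mathfrak{w}}{r_0, r_1, \ldots, r_N}(I, \mathfrak{d})$ holds if and only if there exists $K \in \R$ such that for every $i \in I$ and every $\varepsilon \in (0,1]$ there exist $\ANNf \in \ANNs$ and $\mathfrak{i}, \mathfrak{o} \in \N$ with $\{f_i, \functionANN(\ANNf)\} \subseteq C(\R^{\mathfrak{i}}, \R^{\mathfrak{o}})$, with $\param(\ANNf) \leq K \varepsilon^{-r_0} \prod_{l=1}^N |\mathfrak{d}_l(i)|^{r_l}$, and with $\sup_{x \in \R^{\mathfrak{i}}} \big( \mathfrak{w}(\norm{x}) \norm{ f_i(x) - (\functionANN(\ANNf))(x) } \big) \leq \varepsilon$.

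For the implication \eqref{approximation_space_equivalence:item1} $\Rightarrow$ \eqref{approximation_space_equivalence:item2}, the one point worth spelling out is that the standing hypothesis $f_i \in C(\R^{\inputStruc(i)}, \R^{\outputStruc(i)})$, together with $f_i \in C(\R^{\mathfrak{i}}, \R^{\mathfrak{o}})$ coming from the definition, forces $\mathfrak{i} = \inputStruc(i)$ and $\mathfrak{o} = \outputStruc(i)$, since $f_i$ cannot belong to $C(\R^k, \R^l)$ for two distinct pairs $(k,l) \in \N^2$. Consequently, for each $(i, \varepsilon) \in I \times (0,1]$ the collection of ANNs $\ANNf$ satisfying the three displayed conditions with $\mathfrak{i} = \inputStruc(i)$ and $\mathfrak{o} = \outputStruc(i)$ is non-empty; selecting one such ANN for each $(i, \varepsilon)$ produces a family $(\ANNf_{i,\varepsilon})_{(i,\varepsilon) \in I \times (0,1]} \subseteq \ANNs$ which, together with the constant $K$ (which is necessarily non-negative, since parameter counts are positive and $I$ is non-empty), witnesses \eqref{approximation_space_equivalence:concl1} and \eqref{approximation_space_equivalence:concl3}.

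For the reverse implication \eqref{approximation_space_equivalence:item2} $\Rightarrow$ \eqref{approximation_space_equivalence:item1}, given $K \in [0,\infty)$ and the family $(\ANNf_{i,\varepsilon})$ from \eqref{approximation_space_equivalence:item2}, I would simply take, for each $(i, \varepsilon)$, the ANN $\ANNf = \ANNf_{i,\varepsilon}$ and the dimensions $\mathfrak{i} = \inputStruc(i)$, $\mathfrak{o} = \outputStruc(i)$; then $\{f_i, \functionANN(\ANNf)\} \subseteq C(\R^{\mathfrak{i}}, \R^{\mathfrak{o}})$ by the standing hypothesis together with \eqref{approximation_space_equivalence:concl1}, the parameter bound is exactly \eqref{approximation_space_equivalence:concl1}, and the weighted supremum error bound is exactly \eqref{approximation_space_equivalence:concl3}, so $f$ belongs to the set on the right-hand side of \eqref{ASRates_deflemma:eq1}, i.e.\ $f \in \ASrates{a, \mathfrak{w}}{r_0, r_1, \ldots, r_N}(I, \mathfrak{d})$. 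I do not anticipate any genuine difficulty here: the whole content is the bookkeeping identification of $(\mathfrak{i}, \mathfrak{o})$ with $(\inputStruc(i), \outputStruc(i))$ and the routine passage, using the axiom of choice, from an iterated ``for all \ldots\ there exists'' to the existence of a single doubly-indexed family of ANNs.
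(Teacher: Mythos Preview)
Your proposal is correct and follows the same approach as the paper, which simply writes ``Note that \cref{ASRates_deflemma} establishes that (\eqref{approximation_space_equivalence:item1} $\Leftrightarrow$ \eqref{approximation_space_equivalence:item2})'' and declares the proof complete. You have merely spelled out the bookkeeping (the identification $(\mathfrak{i},\mathfrak{o})=(\inputStruc(i),\outputStruc(i))$, non-negativity of $K$, and the choice of the doubly-indexed family) that the paper leaves implicit.
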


\begin{proof}%
Note that \cref{ASRates_deflemma} establishes that (\eqref{approximation_space_equivalence:item1} $\Leftrightarrow$ \eqref{approximation_space_equivalence:item2}).
The proof of \cref{approximation_space_equivalence} is thus complete.
\end{proof}

\cfclear
\begin{lemma}[Inclusions between approximation spaces]
\label{inclusions}
Let $\mathfrak{d} = (\mathfrak{d}_1, \mathfrak{d}_2, \ldots, \mathfrak{d}_N)$ be an N-fold dimension mapping\cfadd{dimension_mapping_1} on $I$,
let 
	$r_0, r_1, \ldots, r_N, \mathbf{r}_0, \mathbf{r}_1, \ldots, \mathbf{r}_N \in \R$, 
	$a \in C(\R, \R)$, 
	$G = (G_{i,\varepsilon})_{(i,\varepsilon) \in I\times (0,1]} \subseteq (0, \infty]$, 
	$\mathbf{G} = (\mathbf{G}_{i,\varepsilon})_{(i,\varepsilon) \in I\times (0,1]} \subseteq (0, \infty]$, 
	$H = (H_{i,\varepsilon})_{(i,\varepsilon) \in I\times (0,1]} \subseteq (0, \infty]$,
	$\mathbf{H} = (\mathbf{H}_{i,\varepsilon})_{(i,\varepsilon) \in I\times (0,1]} \subseteq (0, \infty]$
satisfy for all 
	$i \in I$, $\varepsilon \in (0,1]$
that
$
	r_i \leq \mathbf{r}_i
$,
$
	G_{i,\varepsilon} \leq \mathbf{G}_{i,\varepsilon}
$,
and
$
	H_{i,\varepsilon} \leq \mathbf{H}_{i,\varepsilon}
$,
and let $\mathfrak{w} \colon [0, \infty) \to [0, \infty)$ be a function \cfload. 
Then
\begin{equation}
\label{Inclusion_Appr}
    \ASGH_{a, \mathfrak{w}, G, H }^{r_0, r_1, \ldots, r_N}(I,\mathfrak{d})
\subseteq
	\ASGH_{a, \mathfrak{w}, \mathbf{G}, \mathbf{H} }^{r_0, r_1, \ldots, r_N}(I,\mathfrak{d})
\subseteq
	\ASGH_{a, \mathfrak{w}, \mathbf{G}, \mathbf{H} }^{\mathbf{r}_0, \mathbf{r}_1, \ldots, \mathbf{r}_N}(I,\mathfrak{d})
\subseteq
     \ASnorates_{a, \mathfrak{w} }(I,\mathfrak{d})
\end{equation}
\cfout.
\end{lemma}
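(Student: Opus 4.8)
The plan is to verify the three displayed inclusions one after another. In each case I would take a function family $f = (f_i)_{i \in I}$ in the smaller set, invoke the characterization in \cref{approximation_space_equivalenceGH} (equivalently, unwind \eqref{approximation_spaces_GH:eq1}) to obtain a constant $K \in [0,\infty)$ and a family of ANNs $(\ANNf_{i,\varepsilon})_{(i,\varepsilon) \in I \times (0,1]} \subseteq \ANNs$ realizing the defining bounds, and then check that the \emph{same} $K$ and the \emph{same} ANNs witness membership in the larger set. The whole argument is thus definitional bookkeeping together with three elementary monotonicity observations.

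For the first inclusion I would note that, among the four conditions in \eqref{approximation_spaces_GH:eq1}, the parameter bound, the containment $\{f_i, \functionANN(\ANNf_{i,\varepsilon})\} \subseteq C(\R^{\inputStruc(i)},\R^{\outputStruc(i)})$, and the weighted approximation estimate do not involve $G$ or $H$, so they transfer verbatim when replacing $G,H$ by $\mathbf{G},\mathbf{H}$. For the linear growth bound I would use that, for all $i \in I$, $\varepsilon \in (0,1]$, $x \in \R^{\inputStruc(i)}$, the hypotheses $G_{i,\varepsilon} \leq \mathbf{G}_{i,\varepsilon}$, $H_{i,\varepsilon} \leq \mathbf{H}_{i,\varepsilon}$ and $\norm{x} \geq 0$ give $\norm{(\functionANN(\ANNf_{i,\varepsilon}))(x)} \leq G_{i,\varepsilon}(H_{i,\varepsilon} + \norm{x}) \leq \mathbf{G}_{i,\varepsilon}(\mathbf{H}_{i,\varepsilon} + \norm{x})$, so $f \in \ASGH_{a,\mathfrak{w},\mathbf{G},\mathbf{H}}^{r_0, r_1, \ldots, r_N}(I,\mathfrak{d})$.

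For the second inclusion the only condition that changes is the parameter bound, and here I would use that $\varepsilon \in (0,1]$ forces $\varepsilon^{-r_0} \leq \varepsilon^{-\mathbf{r}_0}$ (since $r_0 \leq \mathbf{r}_0$) and that $|\mathfrak{d}_l(i)| \in \N$, hence $\geq 1$, forces $|\mathfrak{d}_l(i)|^{r_l} \leq |\mathfrak{d}_l(i)|^{\mathbf{r}_l}$ (since $r_l \leq \mathbf{r}_l$), where the hypothesis $r_i \leq \mathbf{r}_i$ is read with the index ranging over $\{0,1,\ldots,N\}$. Multiplying these inequalities yields $\param(\ANNf_{i,\varepsilon}) \leq K \varepsilon^{-r_0}\prod_{l=1}^N |\mathfrak{d}_l(i)|^{r_l} \leq K \varepsilon^{-\mathbf{r}_0}\prod_{l=1}^N |\mathfrak{d}_l(i)|^{\mathbf{r}_l}$, so again the same $K$ works and $f \in \ASGH_{a,\mathfrak{w},\mathbf{G},\mathbf{H}}^{\mathbf{r}_0, \mathbf{r}_1, \ldots, \mathbf{r}_N}(I,\mathfrak{d})$.

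For the third inclusion I would recall that, by the notation $\ASrates{a,\mathfrak{w}}{\mathbf{r}_0,\ldots,\mathbf{r}_N}(I,\mathfrak{d}) = \ASGH_{a,\mathfrak{w},\infty,\infty}^{\mathbf{r}_0,\ldots,\mathbf{r}_N}(I,\mathfrak{d})$ (with $\infty,\infty$ the families constantly equal to $\infty$) and \cref{ASRates_deflemma}, the growth bound in $\ASrates{a,\mathfrak{w}}{\mathbf{r}_0,\ldots,\mathbf{r}_N}(I,\mathfrak{d})$ is vacuous; since $\mathbf{G}_{i,\varepsilon} \leq \infty$ and $\mathbf{H}_{i,\varepsilon} \leq \infty$, the argument used for the first inclusion gives $\ASGH_{a,\mathfrak{w},\mathbf{G},\mathbf{H}}^{\mathbf{r}_0,\ldots,\mathbf{r}_N}(I,\mathfrak{d}) \subseteq \ASrates{a,\mathfrak{w}}{\mathbf{r}_0,\ldots,\mathbf{r}_N}(I,\mathfrak{d})$, and the latter is contained in $\bigcup_{s_0, \ldots, s_N \in \R} \ASrates{a,\mathfrak{w}}{s_0,\ldots,s_N}(I,\mathfrak{d}) = \ASnorates_{a,\mathfrak{w}}(I,\mathfrak{d})$ by \eqref{approximation_spaces_norates:eq1}. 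I do not expect a genuine obstacle anywhere: the only points requiring a moment's thought are the two monotonicity facts $\varepsilon^{-r_0} \leq \varepsilon^{-\mathbf{r}_0}$ and $|\mathfrak{d}_l(i)|^{r_l} \leq |\mathfrak{d}_l(i)|^{\mathbf{r}_l}$, which rest on $\varepsilon \leq 1$ and $\mathfrak{d}_l(i) \geq 1$, and the harmless reinterpretation of the index in the hypothesis $r_i \leq \mathbf{r}_i$.
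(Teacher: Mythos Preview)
Your proposal is correct and follows essentially the same approach as the paper: the paper's proof is the one-liner ``Note that \eqref{approximation_spaces_GH:eq1} and \eqref{approximation_spaces_norates:eq1} establish \eqref{Inclusion_Appr},'' and you have simply spelled out the monotonicity bookkeeping that makes this immediate from the definitions.
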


\begin{proof}%
Note that
\enum{ 
\eqref{approximation_spaces_GH:eq1};
 \eqref{approximation_spaces_norates:eq1}  
}  
establish \eqref{Inclusion_Appr}.
The proof of \cref{inclusions} is thus complete.
\end{proof}

\subsection{Linear combinations in approximation spaces}
\label{subsect:linear_comb}

\cfclear
\begin{lemma}[Linear combinations in approximation spaces]
\label{algebra_lemma_1}
Let $\mathfrak{d} = (\mathfrak{d}_1, \mathfrak{d}_2, \ldots, \mathfrak{d}_N)$ be an $N$-fold dimension mapping\cfadd{dimension_mapping_1} on $I$,
let $\lambda, c \in \R$, $a \in C(\R, \R)$, $(\mathfrak{i}_d)_{d \in \N} \subseteq \N$, 
$(\ANNi_d)_{d \in \N} \subseteq \ANNs$ satisfy for all $d \in \N$ that
   $\functionANN(\ANNi_d) = \id_{\R^d}$
    $\dims(\ANNi_d) = (d,\mathfrak{i}_d,d)$, and
    $\mathfrak{i}_d \leq c d$, %
let $\mathfrak{w} \colon [0, \infty) \to [0, \infty)$, $\inputStruc \colon I \to \N$, and $\outputStruc \colon I \to \N$ be functions,
and let $f = (f_i)_{i \in I}$, $g = (g_i)_{i \in I} \in \ASnorates_{a, \mathfrak{w}}(I, \mathfrak{d})$ 
satisfy for all $i \in I$ that 
$
    \{ f_i , g_i \} \subseteq C(\R^{\inputStruc(i)}, \R^{\outputStruc(i)} )
$
\cfload.
Then 
\begin{equation}
\label{algebra_lemma_1:concl}
    (I \ni i \mapsto (\lambda f_i + g_i) \in \mycup_{k,l \in \N} C(\R^k, \R^l)) \in \ASnorates_{a, \mathfrak{w}}(I, \mathfrak{d}).
\end{equation}
\end{lemma}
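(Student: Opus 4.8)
The plan is to unwind the definition of $\ASnorates_{a, \mathfrak{w}}(I, \mathfrak{d})$ via \cref{AS_norates_alternative_def}, apply the ANN-sum result \cref{Lemma:SumsOfANNS} index-wise, and then verify the resulting parameter bounds are still polynomial in the dimension quantities $|\mathfrak{d}_l(i)|$. First I would apply \cref{approximation_space_equivalence} (or equivalently \cref{AS_norates_alternative_def}) to the hypotheses $f, g \in \ASnorates_{a, \mathfrak{w}}(I, \mathfrak{d})$: this yields a constant $K_f \in [0,\infty)$ and a family $(\ANNf_{i,\varepsilon})_{(i,\varepsilon) \in I \times (0,1]} \subseteq \ANNs$ with $\functionANN(\ANNf_{i,\varepsilon}) \in C(\R^{\inputStruc(i)}, \R^{\outputStruc(i)})$, $\param(\ANNf_{i,\varepsilon}) \leq K_f \varepsilon^{-K_f} \prod_{l=1}^N |\mathfrak{d}_l(i)|^{K_f}$, and $\sup_{x \in \R^{\inputStruc(i)}}(\mathfrak{w}(\norm{x}) \norm{f_i(x) - (\functionANN(\ANNf_{i,\varepsilon}))(x)}) \leq \varepsilon$, and analogously a constant $K_g$ and a family $(\ANNg_{i,\varepsilon})$ for $g$.

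The key step is then, for each $i \in I$ and $\varepsilon \in (0,1]$, to apply \cref{Lemma:SumsOfANNS} with $\lambda_1 = \lambda$, $\lambda_2 = 1$, the ANNs $\ANNf_1 = \ANNf_{i,\varepsilon/2}$ and $\ANNf_2 = \ANNg_{i,\varepsilon/2}$ (using accuracy $\varepsilon/2$ so the triangle inequality gives total accuracy $\varepsilon$), the constant $c$, and the identity-ANN family $(\ANNi_d)_{d\in\N}$ supplied in the hypotheses; note that $\inDimANN(\ANNf_1) = \inputStruc(i) = \inDimANN(\ANNf_2)$ and $\outDimANN(\ANNf_1) = \outputStruc(i) = \outDimANN(\ANNf_2)$ so the hypotheses of \cref{Lemma:SumsOfANNS} are met. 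This produces $\ANNh_{i,\varepsilon} \in \ANNs$ with $\functionANN(\ANNh_{i,\varepsilon}) \in C(\R^{\inputStruc(i)}, \R^{\outputStruc(i)})$, $(\functionANN(\ANNh_{i,\varepsilon}))(x) = \lambda (\functionANN(\ANNf_1))(x) + (\functionANN(\ANNf_2))(x)$ for all $x$, and $\param(\ANNh_{i,\varepsilon}) \leq 11 \max\{1, c^2\} (\max\{\inputStruc(i), \outputStruc(i)\})^2 (\param(\ANNf_1) + \param(\ANNf_2))$. From the pointwise realization identity and the triangle inequality one gets $\mathfrak{w}(\norm{x}) \norm{(\lambda f_i + g_i)(x) - (\functionANN(\ANNh_{i,\varepsilon}))(x)} \leq |\lambda| \mathfrak{w}(\norm{x})\norm{f_i(x) - (\functionANN(\ANNf_1))(x)} + \mathfrak{w}(\norm{x})\norm{g_i(x) - (\functionANN(\ANNf_2))(x)}$; to make this $\leq \varepsilon$ one should in fact use accuracy $\varepsilon/(2 \max\{1, |\lambda|\})$ in the invocations of \cref{approximation_space_equivalence} rather than $\varepsilon/2$, which only changes the constant.

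Finally I would assemble the parameter bound. Since $\max\{\inputStruc(i), \outputStruc(i)\}^2 \leq \inputStruc(i)^2 \outputStruc(i)^2$ (as both are $\geq 1$) and $\param(\ANNf_1) + \param(\ANNf_2) \leq (K_f + K_g)(\tfrac{2\max\{1,|\lambda|\}}{\varepsilon})^{\max\{K_f, K_g\}} \prod_{l=1}^N |\mathfrak{d}_l(i)|^{\max\{K_f, K_g\}}$, the bound on $\param(\ANNh_{i,\varepsilon})$ has the form $K \varepsilon^{-K} \prod_{l=1}^N |\mathfrak{d}_l(i)|^{K}$ for a suitable $K \in [0,\infty)$ depending only on $N$, $c$, $K_f$, $K_g$, $\lambda$ — \emph{provided} one can absorb the factor $\inputStruc(i)^2 \outputStruc(i)^2$ into the product $\prod_l |\mathfrak{d}_l(i)|$. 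This is the one genuine subtlety: it requires that $\inputStruc(i)$ and $\outputStruc(i)$ be themselves polynomially controlled by $\prod_l |\mathfrak{d}_l(i)|$, which is not assumed abstractly here but holds because $f_i, g_i \in C(\R^{\inputStruc(i)}, \R^{\outputStruc(i)})$ are realized by ANNs with at most $K_f \varepsilon^{-K_f}\prod_l|\mathfrak{d}_l(i)|^{K_f}$ parameters — and any ANN computing a function $\R^{\inputStruc(i)} \to \R^{\outputStruc(i)}$ has at least $\inputStruc(i) + \outputStruc(i)$ parameters (its first weight matrix has $\inputStruc(i)$ columns and its last has $\outputStruc(i)$ rows), so $\inputStruc(i) + \outputStruc(i) \leq K_f \prod_l |\mathfrak{d}_l(i)|^{K_f}$ (taking $\varepsilon = 1$). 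Hence $\inputStruc(i)^2\outputStruc(i)^2 \leq (K_f)^4 \prod_l |\mathfrak{d}_l(i)|^{4K_f}$ and the bound closes. Then applying \cref{approximation_space_equivalence} in the reverse direction to the family $(\ANNh_{i,\varepsilon})$ yields \eqref{algebra_lemma_1:concl}, completing the proof.
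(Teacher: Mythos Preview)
Your proposal is correct and follows essentially the same approach as the paper's proof: both unwind the definition via \cref{approximation_space_equivalence}, apply \cref{Lemma:SumsOfANNS} with suitably scaled accuracies (the paper uses $\tfrac{\varepsilon}{2\max\{|\lambda|,1\}}$ for $f$ and $\tfrac{\varepsilon}{2}$ for $g$, you use the former for both), and then absorb the factor $(\max\{\inputStruc(i),\outputStruc(i)\})^2$ by bounding $\max\{\inputStruc(i),\outputStruc(i)\} \leq \param(\ANNf_{i,1}) \leq K \prod_l |\mathfrak{d}_l(i)|^{r_l}$ --- exactly the subtlety you identified. The only cosmetic difference is that the paper first passes to common rates $r_0,\dots,r_N$ for $f$ and $g$ via \cref{inclusions}, whereas you work with separate constants $K_f,K_g$ and take their maximum at the end.
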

\begin{proof}%
\cref{algebra_lemma_1} follows from \cref{Lemma:SumsOfANNS}. See \cite[Lemma 3.17]{Beneventano2020v1} for more details.
\end{proof}

\subsection{Limits in approximation spaces}
\label{subsect:limits}

\cfclear
\begin{prop}[Limits in approximation spaces]
\label{lim_approximation}
Let $\mathfrak{d} = (\mathfrak{d}_1, \mathfrak{d}_2, \ldots, \mathfrak{d}_N)$ be an N-fold dimension mapping\cfadd{dimension_mapping_1} on $I$,
let $\mathfrak{w} \colon [0, \infty) \to [0, \infty)$, $\inputStruc \colon I \to \N$, and $\outputStruc \colon I \to \N$ be functions, 
let $R \in (0, \infty)$, $ r_0,r_1, \ldots, r_N, \alpha_1, \alpha_2, \ldots, \alpha_N, \rho \in [0,\infty)$, $a \in C(\R, \R)$, 
$ f = (f_{i,n})_{(i,n) \in I \times \N} \in
\ASrates{a, \mathfrak{w}}{r_0, r_1, \ldots, r_N, \rho}(I \times \N, \mathfrak{d} \proddimmap \id_\N)$,
$g = (g_i)_{i \in I} \in \setofFF_I$
satisfy for all $i \in I$, $n \in \N$ that
$\{f_{i,n}, g_{i}\} \subseteq C(\R^{\inputStruc(i)}, \R^{\outputStruc(i)})$,
and assume
\begin{equation}
\label{lim_approximation:hyp2}
    \sup_{i \in I} \sup_{n \in \N} \sup_{x \in     \R^{\inputStruc(i)}} 
    \left(
    \frac{ 
      \mathfrak{w}( \norm{ x })
      \norm{ g_i(x) - f_{i,n}(x) }
    }{
    n^{-R} \prod_{l= 1}^N |\mathfrak{d}_l(i)|^{\alpha_l}
    } \right) < \infty
\end{equation}
\cfload.
Then 
$
  g
\in 
  \ASrates{a, \mathfrak{w}}{r_0 + \rho/R, r_1 + \rho \alpha_1/R , \ldots, r_N + \rho \alpha_N/R}(I, \mathfrak{d})
$.
\end{prop}

\begin{proof}%
\label{lim_approximation_proof}
Throughout this proof let $D \in [0,\infty)$ satisfy
\begin{equation}
\label{lim_approximation_proof:not_1}
    D = \sup_{i \in I} \sup_{n \in \N} \sup_{x \in \R^{\inputStruc(i)}} 
    \left( \frac{ 
      \mathfrak{w}( \norm{ x })
      \norm{ g_i(x) - f_{i,n}(x) }
    }{
    n^{-R} \smallprod{l}{1}{N} |\mathfrak{d}_l(i)|^{\alpha_l}
    }\right),
\end{equation}
let 	$(\mathfrak{q}_{i, \delta})_{(i, \delta) \in I \times (0, 1]} \subseteq \R$,  
	$(\mathfrak{n}_{i, \delta})_{(i, \delta) \in I \times (0, 1]} \subseteq \N$
satisfy for all 
	$i \in I$, $\delta \in (0,1]$ 
that 
\begin{equation}
\label{lim_approximation_proof:not_3.0}
    \mathfrak{q}_{i, \delta} = \big( 
    D \delta^{-1}
    \smallprod{l}{1}{N}|\mathfrak{d}_l(i)|^{\alpha_l}
     \big)^{\!\!\nicefrac{1}{R}}
\qandq
	 \mathfrak{n}_{i, \delta} = \min(\N \cap [\mathfrak{q}_{i, \delta}, \infty)).
\end{equation}
Note that \eqref{lim_approximation_proof:not_1} assures that for all $i \in I$, $n \in \N$ it holds that 
\begin{equation}
\label{lim_approximation_proof:not_2}
   \sup\nolimits_{x \in \R^{\inputStruc(i)}} 
    \big(
      \mathfrak{w}( \norm{ x })
      \norm{ g_i(x) - f_{i,n}(x) } \!
     \big)
     \leq 
     D n^{-R}
     \smallprod{l}{1}{N}|\mathfrak{d}_l(i)|^{\alpha_l}.
\end{equation}
This ensures that for all $i \in I$, $\delta \in (0,1]$ it holds that
\begin{equation}
\label{lim_approximation_proof:not_4}
\begin{split}
    \sup\nolimits_{x \in \R^{\inputStruc(i)}} 
    \big(
    \mathfrak{w}( \Norm{ x })
    \Norm{ g_i(x) - f_{i,\mathfrak{n}_{i, \delta}}(x) }
    \big) 
&\leq
    D | \mathfrak{n}_{i, \delta}|^{-R}
    \smallprod{l}{1}{N}|\mathfrak{d}_l(i)|^{\alpha_l} \\
&\leq
    D |\mathfrak{q}_{i, \delta}|^{-R} 
    \smallprod{l}{1}{N}|\mathfrak{d}_l(i)|^{\alpha_l}
= 
    \delta.
\end{split}
\end{equation}
In addition, observe that \cref{approximation_space_equivalence} and the assumption that $f \in \ASrates{a, \mathfrak{w}}{r_0, r_1, \ldots, r_N, \rho}
(I \times \N, \mathfrak{d} \proddimmap \id_\N)$ imply that there exist $K \in [0,\infty)$, $(\ANNf_{i,n,\varepsilon})_{(i, n, \varepsilon) \in I \times \N \times (0,1]} \subseteq \ANNs$ which satisfy for all $i \in I$, $n \in \N$, $\varepsilon \in (0,1]$ that
\begin{equation}
\label{lim_approximation_proof:not_5}
  \param(\ANNf_{i,n,\varepsilon}) 
\leq
  K \varepsilon^{-r_0} n^{\rho}
   \smallprod{l}{1}{N}  |\mathfrak{d}_l(i)|^{r_l} ,
   \qquad
    \functionANN(\ANNf_{i,n,\varepsilon}) \in C(\R^{\inputStruc(i)}, \R^{\outputStruc(i)} ),
\end{equation}
\begin{equation}
\label{lim_approximation_proof:not_7}
  \andq
  \sup\nolimits_{x \in \R^{\inputStruc(i)}} 
      \big( \mathfrak{w}( \norm{ x })
      \norm{ f_{i,n}(x) - \functionANN(\ANNf_{i,n,\varepsilon})(x) } \! \big)
 \leq
  \varepsilon
\end{equation}
\cfload.
In the next step let $(\ANNh_{i, \delta})_{(i,\delta) \in I \times (0, 1]}\subseteq \ANNs$ satisfy for all $i \in I$, $\delta \in (0, 1]$ that
\begin{equation}
\label{lim_approximation_proof:not_8}
    \ANNh_{i, \delta} = \ANNf_{i,(\mathfrak{n}_{i,{\delta}/{2}}),{\delta}/{2}}.
\end{equation}
Note that \eqref{lim_approximation_proof:not_5} and \eqref{lim_approximation_proof:not_8} imply that for all $i \in I$, $\delta \in (0, 1]$ it holds that
\begin{equation}
\label{lim_approximation_proof:dis_1}
    \param(\ANNh_{i, \delta})
    \leq 
    K \big[\tfrac{\delta}{2}\big]^{-r_0}
    \big| \mathfrak{n}_{i,{\delta}/{2}}\big|^{\rho}
     \smallprod{l}{1}{N}  |\mathfrak{d}_l(i)|^{r_l} .
\end{equation}
Next observe that \eqref{lim_approximation_proof:not_3.0} 
implies that for all $i \in I$, $\delta \in (0, 1]$ it holds that
\begin{equation}
    \mathfrak{n}_{i, \delta} \leq
    \big( D \delta^{-1}
    	\smallprod{l}{1}{N}  |\mathfrak{d}_l(i)|^{\alpha_l} 
    \big)^{ \!\nicefrac{1}{R}} +1.
\end{equation}
This, \eqref{lim_approximation_proof:dis_1}, and
the fact that for all $a, b \in \R$, $\eta \in [0, \infty)$ it holds that $|a+b|^\eta \leq 2^{\max\{\eta-1,0\} }(|a|^\eta + |b|^\eta)$ ensure that for all $i \in I$, $\delta \in (0, 1]$ it holds that
\begin{equation}
\begin{split}
    &\param(\ANNh_{i, \delta}) \\
& \leq 
    K \left(\tfrac{\delta}{2}\right)^{\!\!-r_0}
    2^{\max\{\rho-1,0\}} \Big[ \big( 
    D2 \delta^{-1}
    	\smallprod{l}{1}{N}  |\mathfrak{d}_l(i)|^{\alpha_l} 
     \big) ^{\nicefrac{\rho}{R}} + 1 \Big]
       \smallprod{l}{1}{N}  |\mathfrak{d}_l(i)|^{r_l} 
    \\
    & = 2^{\max\{\rho-1,0\} + r_0 + \frac{\rho}{R}} 
    KD^{\frac{\rho}{R}}  \delta ^{-r_0-\frac{\rho}{R}}
        \big[ \smallprod{l}{1}{N}  |\mathfrak{d}_l(i)|^{r_l+\frac{\alpha_l\rho}{R}} \big] 
     +  2^{\max\{\rho-1,0\} + r_0 }  K  \delta^{-r_0} 
      \smallprod{l}{1}{N}  |\mathfrak{d}_l(i)|^{r_l} .
\end{split}
\end{equation}
This, the fact that for all $l \in \{1, \ldots, N\}$ it holds that $r_l \leq r_l+\frac{\alpha_l\rho}{R}$, and the fact that $r_0 \leq r_0+\frac{\rho}{R}$ imply that for all $i \in I$, $\delta \in (0,1]$ it holds that
\begin{equation}
    \label{lim_approximation_proof:con_1}
    \param(\ANNh_{i, \delta}) \leq
    2^{\max\{\rho,1\} + r_0 + \frac{\rho}{R}}  K (\max\{1, D\}) ^{\frac{\rho}{R}}
    \delta ^{-r_0-\frac{\rho}{R}}
    \smallprod{l}{1}{N}  |\mathfrak{d}_l(i)|^{r_l+\frac{\alpha_l\rho}{R}} .
\end{equation}
Moreover, note that \eqref{lim_approximation_proof:not_5} and \eqref{lim_approximation_proof:not_8} assure that for all $i \in I$, $\delta \in (0, 1]$ it holds that
\begin{equation}
\label{lim_approximation_proof:con_2}
    \functionANN(\ANNh_{i, \delta}) \in C(\R^{\inputStruc(i)}, \R^{\outputStruc(i)} ).
\end{equation}
In addition, note that the triangle inequality assures that for all $i \in I$, $\delta \in (0, 1]$, $x \in \R^{\inputStruc(i)}$ it holds that
\begin{equation}
\label{lim_approximation_proof:step_1}
      \Normm{ g_i(x) -  (\functionANN(\ANNh_{i, \delta}))(x) }
      \leq
      \Normm{ g_i(x) -  f_{i,\mathfrak{n}_{i,{\delta}/{2}}}(x) } + \Normm{ f_{i,\mathfrak{n}_{i,{\delta}/{2}}}(x) -  \big(\functionANN( \ANNf_{i,(\mathfrak{n}_{i,{\delta}/{2}}),{\delta}/{2}})\big)(x) }.
\end{equation}
This,
\eqref{lim_approximation_proof:not_4}, and \eqref{lim_approximation_proof:not_7} imply that for all $i \in I$, $\delta \in (0, 1]$ it holds that
\begin{equation}
\label{lim_approximation_proof:step_2}
    \begin{split}
      & \sup\nolimits_{x \in \R^{\inputStruc(i)}} 
     \big( \mathfrak{w}( \norm{ x })
      \norm{ g_i(x) -  (\functionANN(\ANNh_{i, \delta}))(x) } \! \big)
      \\ & \leq
      \big( \sup\nolimits_{x \in \R^{\inputStruc(i)}} 
      \big( \mathfrak{w}( \norm{ x })
      \Norm{ g_i(x) -  f_{i,(\mathfrak{n}_{i,{\delta}/{2}})}(x) } \big) \big) \\
      & \quad +
      \big( \sup\nolimits_{x \in \R^{\inputStruc(i)}} \big(
      \mathfrak{w}( \norm{ x })
      \Norm{ f_{i,(\mathfrak{n}_{i,{\delta}/{2}})}(x) -  \big(\functionANN( \ANNf_{i,(\mathfrak{n}_{i,{\delta}/{2}}),{\delta}/{2}})\big)(x) }\big) \big)\\
      &\leq
        \tfrac{\delta}{2} + \tfrac{\delta}{2} = \delta.
    \end{split}
\end{equation}
This, \eqref{lim_approximation_proof:con_1}, \eqref{lim_approximation_proof:con_2}, and \cref{approximation_space_equivalence} establish that 
$
  g
\in 
  \ASrates{a, \mathfrak{w}}{r_0 + \rho/R, r_1 + \rho \alpha_1/R , \ldots, r_N + \rho \alpha_N/R}(I, \mathfrak{d})
$.
The proof of \cref{lim_approximation} is thus complete.
\end{proof}

\cfclear
\begin{cor}[Sufficient conditions for approximation spaces]
\label{epsilon_approximation}
Let $\mathfrak{d} = (\mathfrak{d}_1, \mathfrak{d}_2, \ldots, \mathfrak{d}_N)$ be an N-fold dimension mapping\cfadd{dimension_mapping_1} on $I$,
let 
	$R \in (0, \infty)$, 
	$ r_0,r_1, \ldots, r_N, \alpha_1, \alpha_2, \ldots, \alpha_N, K \in [0,\infty)$, 
	$a \in C(\R, \R)$, 
	$f = (f_i)_{i \in I} \in \setofFF_I$,
	$(\ANNf_{i,\varepsilon})_{(i, \varepsilon) \in I \times (0,1]} \subseteq \ANNs$,
let
$\mathfrak{w} \colon [0, \infty) \to [0, \infty)$, $\inputStruc \colon I \to \N$, and $\outputStruc \colon I \to \N$ be functions,
assume for all $i \in I$ that $f_i \in C(\R^{\inputStruc(i)}, \R^{\outputStruc(i)})$,
and assume for all $i \in I$, $\varepsilon \in (0,1]$ that
\begin{equation}
\label{epsilon_approximation:set1}
    \param(\ANNf_{i,\varepsilon}) 
    \leq
    K
    \varepsilon^{-r_0}
    \smallprod{l}{1}{N}  |\mathfrak{d}_l(i)|^{r_l},
    \qquad
    \functionANN(\ANNf_{i,\varepsilon}) \in C(\R^{\inputStruc(i)}, \R^{\outputStruc(i)} ),
\end{equation}
\begin{equation}
\label{epsilon_approximation:set3}
    \andq
    \sup\nolimits_{x \in \R^{\inputStruc(i)}} \big(
    \mathfrak{w}( \norm{ x })
    \Norm{ f_i(x) - ( \functionANN(\ANNf_{i,\varepsilon}) ) (x) } 
    \big)  \leq K \varepsilon^{R}
     \smallprod{l}{1}{N}  |\mathfrak{d}_l(i)|^{\alpha_l}
\end{equation}
\cfload.
Then
$
    f \in \ASrates{a, \mathfrak{w}}{\frac{r_0}{R}, r_1 + \frac{r_0\alpha_1}{R}, \ldots, r_N + \frac{r_0\alpha_N}{R}}(I, \mathfrak{d})
$
\cfout.
\end{cor}

\begin{proof}%
\cref{epsilon_approximation} follows from \cref{lim_approximation}. See \cite[Corollary 3.19]{Beneventano2020v1} for more details.
\end{proof}

\subsection{Infinite compositions in approximation spaces}
\label{subsect:compositions}

\newcommand{\compNumber}{\mathfrak{n}}
\cfclear
\begin{prop}[Infinite composition of function families in approximation spaces]
\label{composition_statement}
Let $\mathfrak{d} = (\mathfrak{d}_1, \mathfrak{d}_2, \ldots, \mathfrak{d}_N)$ be an N-fold dimension mapping\cfadd{dimension_mapping_1} on $I$,
 let $\mathsf{d} = (\mathsf{d}_1, \mathsf{d}_2, \ldots,  \mathsf{d}_M)$ be an M-fold dimension mapping\cfadd{dimension_mapping_1} on $J$,
let 
	$\kappa, \allowbreak 
	r_0, r_1, \ldots, r_N, \allowbreak 
	\mathbf{r}_1, \mathbf{r}_2, \ldots, \mathbf{r}_M, \allowbreak 
	\alpha_1, \alpha_2, \ldots, \alpha_M, \allowbreak 
	\beta_1, \beta_2, \ldots, \beta_N, \allowbreak 
	\gamma_1, \gamma_2, \ldots, \gamma_M, \allowbreak 
	R_1, R_2, \ldots, R_N, \allowbreak \mathbf{R}_1, \mathbf{R}_2, \ldots, \mathbf{R}_M \in [0,\infty)$, 
	$\rho \in [0,1)$,
	$(\compNumber_j)_{j \in J} \subseteq \{2, 3, 4, \ldots \}$,
	$A = \{(i,j,k) \in I \times J \times \N \colon k \leq \compNumber_j \}$, 
	$B = \{(i,j,k) \in A \colon k \leq \compNumber_j-1 \}$, 
	$(L^{j,k}_i)_{(i, j, k) \in A } \subseteq (0,\infty]$,  
	$G = (G^{j,k}_{i, \varepsilon})_{(i, j, k, \varepsilon) \in B \times (0,1]} \subseteq (0,\infty]$,
	$H = (H^{j,k}_{i, \varepsilon})_{(i, j, k, \varepsilon) \in B \times (0,1]} \subseteq (0,\infty]$,
	$a \in C(\R, \R)$
satisfy for all 
	$i \in \{1, 2, \ldots, N\}$, 
	$j \in \{1, 2, \ldots, M\}$ 
that 
	$R_i = 2r_i+\frac{2r_0\beta_i}{1 - \rho}$ 
and
	$\mathbf{R}_j = 2\mathbf{r}_j+\alpha_j+\frac{2r_0((\kappa+1)\alpha_j+\gamma_j)}{1 - \rho}$, 
assume
\begin{equation}
\label{composition_statement:ass1}
\begin{split}
  &\left[
    \sup_{(i, j, k) \in A}
    \sup_{ \varepsilon \in (0,1]}
    \frac{
        \big[ \prod_{l = k+1}^{\compNumber_j} L^{j,l}_i \big]
        \left[
          \max \!\big(\{H^{j,l}_{i, \varepsilon} \in (0, \infty] \colon l \in \N \cap (0,k)\} \cup \{ 1 \} \big)
          \big[ \prod_{l = 1}^{k-1} \max\{G^{j,l}_{i, \varepsilon}, 1 \} \big]
        \right]^\kappa
    }{
      \varepsilon^{-\rho}
      \big[ \smallprod{l}{1}{N}  |\mathfrak{d}_l(i)|^{\beta_l} \big]
      \big[ \smallprod{l}{1}{M}  |\mathsf{d}_l(j)|^{\gamma_l} \big]
    }
  \right] \\
&\quad
  +
    \sup_{j \in J}
    \frac{\compNumber_j}{
      \smallprod{l}{1}{M}  |\mathsf{d}_l(j)|^{\alpha_l}
    }
   < \infty,
\end{split}
\end{equation}
let 
	$\mathfrak{w} \colon [0, \infty) \to [0, \infty)$,
	$\inputStruc \colon A \to \N$, and 
	$\outputStruc \colon A \to \N$ 
satisfy
for all 
	$v \in [0,\infty)$, $(i,j,k) \in B$ 
that 
$
	  \mathfrak{w}(v) = (1 + v^\kappa)^{-1}
$
and
$
	\outputStruc(i,j,k) = \inputStruc(i,j,k+1),
$
for every $j \in J$, $k \in \{1, \ldots, \compNumber_j \}$ let $f^{j,k} = (f^{j,k}_i)_{i \in I} \in \setofFF_I$
satisfy for all 
	$i \in I$,
	$x, y \in \R^{\inputStruc(i,j,k)}$  that
$f^{j,k}_i \in C(\R^{\inputStruc(i,j,k)}, \R^{\outputStruc(i,j,k)})$ and
$
  \Norm{ f^{j,k}_i(x) - f^{j,k}_i(y) }
\leq
  L^{j,k}_i \norm{ x - y }
$,
and assume
\begin{equation}
\label{composition_statement:ass5b}
\left( I \times J  \ni (i,j) \mapsto
f^{j,\compNumber_j}_i \in \mycup_{k,l \in \N} C(\R^k, \R^l) \right) \in 
\ASrates{a, \mathfrak{w}}{r_0, r_1, \ldots, r_N, \mathbf{r}_1, \mathbf{r}_2, \ldots, \mathbf{r}_M}(I \times J, (\mathfrak{d} \proddimmap \mathsf{d}))
\end{equation}
and
\begin{equation}
\label{composition_statement:ass5a}
\left( B \ni (i,j,k) \mapsto
f^{j,k}_i \in \mycup_{m,l \in \N} C(\R^m, \R^l) \right) \in 
\ASGH_{a, \mathfrak{w}, G, H}^{r_0, r_1, \ldots, r_N, \mathbf{r}_1, \mathbf{r}_2, \ldots, \mathbf{r}_M, 0}(B, (\mathfrak{d} \proddimmap \mathsf{d} \proddimmap \id_\N)|_B)
\end{equation}
\cfload.
Then
\begin{equation}
\label{composition_statement:conc}
    \big( 
    \prodofstruct\nolimits_{j \in J}  
    \big( 
    f^{j,\compNumber_j} \compofstruct f^{j,\compNumber_j-1} \compofstruct \ldots \compofstruct f^{j,1} 
    \big) \big) \in
    \ASrates{a, \mathfrak{w}}{
    \frac{2r_0}{1-\rho}, R_1, R_2, \ldots, R_N, \mathbf{R}_1, \mathbf{R}_2, \ldots, \mathbf{R}_M
    }(I \times J, \mathfrak{d} \proddimmap \mathsf{d})
\end{equation}
\cfout.
\end{prop}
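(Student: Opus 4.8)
The plan is to approximate each factor $f^{j,k}_i$ by an ANN at a single common accuracy $\delta = \delta_{i,j,\varepsilon} \in (0,1]$, to glue these ANNs together with the composition operation $\compANNbullet$, and then to control the parameter count and the weighted approximation error of the resulting network; the accuracy $\delta$ will in the end be taken to be a suitable power of $\varepsilon$ times dimension-dependent factors. First I would apply \cref{approximation_space_equivalence} to \eqref{composition_statement:ass5b} and \cref{approximation_space_equivalenceGH} to \eqref{composition_statement:ass5a} to obtain a constant $K \in [0,\infty)$ and ANNs $\ANNf^{j,k}_{i,\varepsilon} \in \ANNs$ (for $(i,j,k)$ in the relevant index set and $\varepsilon \in (0,1]$) such that for all such $(i,j,k)$, all $\varepsilon \in (0,1]$, and all $x$ it holds that $\functionANN(\ANNf^{j,k}_{i,\varepsilon}) \in C(\R^{\inputStruc(i,j,k)}, \R^{\outputStruc(i,j,k)})$, that $\param(\ANNf^{j,k}_{i,\varepsilon}) \leq K \varepsilon^{-r_0} [\smallprod{l}{1}{N} |\mathfrak{d}_l(i)|^{r_l}] [\smallprod{l}{1}{M} |\mathsf{d}_l(j)|^{\mathbf{r}_l}]$ (the additional $\id_\N$-rate in \eqref{composition_statement:ass5a} being $0$), that $\Norm{f^{j,k}_i(x) - \functionANN(\ANNf^{j,k}_{i,\varepsilon})(x)} \leq \varepsilon(1 + \norm{x}^\kappa)$ (here using that $\mathfrak{w}(v) = (1+v^\kappa)^{-1}$), and, in addition for $k \leq \compNumber_j - 1$, that $\norm{\functionANN(\ANNf^{j,k}_{i,\varepsilon})(x)} \leq G^{j,k}_{i,\varepsilon}(H^{j,k}_{i,\varepsilon} + \norm{x})$.

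Write $D \in [0,\infty)$ for the value of the finite supremum in \eqref{composition_statement:ass1} and $C_0 \in [0,\infty)$ for $\sup_{j \in J}(\compNumber_j [\smallprod{l}{1}{M} |\mathsf{d}_l(j)|^{\alpha_l}]^{-1})$, so that in particular $\compNumber_j \leq C_0 \smallprod{l}{1}{M} |\mathsf{d}_l(j)|^{\alpha_l}$ for every $j \in J$. For $\delta \in (0,1]$ set $\ANNh^{\delta}_{i,j} = \ANNf^{j,\compNumber_j}_{i,\delta} \compANNbullet \ANNf^{j,\compNumber_j-1}_{i,\delta} \compANNbullet \cdots \compANNbullet \ANNf^{j,1}_{i,\delta}$; this composition is well defined because $\outputStruc(i,j,k) = \inputStruc(i,j,k+1)$ forces the output and input dimensions of consecutive networks to agree. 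Item~\eqref{PropertiesOfCompositions_n:Realization} of \cref{Lemma:PropertiesOfCompositions_n2} then gives $\functionANN(\ANNh^{\delta}_{i,j}) = \functionANN(\ANNf^{j,\compNumber_j}_{i,\delta}) \circ \cdots \circ \functionANN(\ANNf^{j,1}_{i,\delta}) \in C(\R^{\inputStruc(i,j,1)}, \R^{\outputStruc(i,j,\compNumber_j)})$, and \cref{Lemma:PropertiesOfCompositions_n3} (applicable since $\compNumber_j \geq 2$) together with the uniform parameter bound above and $\compNumber_j \leq C_0 \smallprod{l}{1}{M} |\mathsf{d}_l(j)|^{\alpha_l}$ yields $\param(\ANNh^{\delta}_{i,j}) \leq 2 C_0 K^2 \delta^{-2r_0} [\smallprod{l}{1}{N} |\mathfrak{d}_l(i)|^{2r_l}] [\smallprod{l}{1}{M} |\mathsf{d}_l(j)|^{2\mathbf{r}_l + \alpha_l}]$.

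The core of the argument is the error estimate. Fix $i \in I$, $j \in J$, and $\delta \in (0,1]$, abbreviate $F_k = f^{j,k}_i$ and $\hat F_k = \functionANN(\ANNf^{j,k}_{i,\delta})$, and telescope keeping the \emph{true} maps on the outside:
\begin{multline*}
F_{\compNumber_j} \circ \cdots \circ F_1 - \hat F_{\compNumber_j} \circ \cdots \circ \hat F_1 \\
= \smallsum_{k=1}^{\compNumber_j} \bigl( F_{\compNumber_j} \circ \cdots \circ F_{k+1} \circ F_k \circ \hat F_{k-1} \circ \cdots \circ \hat F_1 \\
\qquad {} - F_{\compNumber_j} \circ \cdots \circ F_{k+1} \circ \hat F_k \circ \hat F_{k-1} \circ \cdots \circ \hat F_1 \bigr).
\end{multline*}
Since $F_{\compNumber_j} \circ \cdots \circ F_{k+1}$ is Lipschitz with constant $\prod_{l=k+1}^{\compNumber_j} L^{j,l}_i$ by the hypothesis on the maps $f^{j,k}_i$, the $k$-th summand evaluated at a point $x \in \R^{\inputStruc(i,j,1)}$ has norm at most $[\prod_{l=k+1}^{\compNumber_j} L^{j,l}_i]\, \delta\, (1 + \norm{y_{k-1}}^\kappa)$, where $y_0 = x$ and $y_{m} = \hat F_m \circ \cdots \circ \hat F_1(x)$. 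Iterating the linear-growth bounds for $\hat F_1, \ldots, \hat F_{k-1}$ shows that $\norm{y_{k-1}} \leq [\prod_{l=1}^{k-1} \max\{G^{j,l}_{i,\delta}, 1\}] (\compNumber_j \max(\{H^{j,l}_{i,\delta} : l \in \N \cap (0,k)\} \cup \{1\}) + \norm{x})$; multiplying by $\mathfrak{w}(\norm{x}) = (1 + \norm{x}^\kappa)^{-1}$ and using the elementary bounds $\mathfrak{w}(v)(b + v^\kappa) \leq 1 + b$ and $(a+b)^\kappa \leq 2^{\max\{\kappa-1,0\}}(a^\kappa + b^\kappa)$ for $a,b \geq 0$, the quantity $\mathfrak{w}(\norm{x})(1 + \norm{y_{k-1}}^\kappa)$ is bounded, up to a multiplicative constant depending only on $\kappa$, by $\compNumber_j^\kappa [\max(\{H^{j,l}_{i,\delta} : l \in \N \cap (0,k)\} \cup \{1\}) \prod_{l=1}^{k-1} \max\{G^{j,l}_{i,\delta}, 1\}]^\kappa$. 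Assumption \eqref{composition_statement:ass1} (evaluated at $\varepsilon = \delta$) bounds $[\prod_{l=k+1}^{\compNumber_j} L^{j,l}_i]$ times this $\kappa$-th power by $D \delta^{-\rho} [\smallprod{l}{1}{N} |\mathfrak{d}_l(i)|^{\beta_l}] [\smallprod{l}{1}{M} |\mathsf{d}_l(j)|^{\gamma_l}]$, so summing over the $\compNumber_j$ values of $k$ and using $\compNumber_j \leq C_0 \smallprod{l}{1}{M} |\mathsf{d}_l(j)|^{\alpha_l}$ once more produces, for all $i,j,\delta$,
\begin{equation*}
\sup\nolimits_{x \in \R^{\inputStruc(i,j,1)}} \Bigl( \mathfrak{w}(\norm{x}) \Norm{(F_{\compNumber_j} \circ \cdots \circ F_1)(x) - \functionANN(\ANNh^{\delta}_{i,j})(x)} \Bigr) \leq \mathfrak{C}\, \delta^{1-\rho} \Bigl[\smallprod{l}{1}{N} |\mathfrak{d}_l(i)|^{\beta_l}\Bigr] \Bigl[\smallprod{l}{1}{M} |\mathsf{d}_l(j)|^{(\kappa+1)\alpha_l + \gamma_l}\Bigr]
\end{equation*}
for a constant $\mathfrak{C} \in [1,\infty)$ depending only on $D$, $C_0$, and $\kappa$.

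It remains to pick $\delta$ and collect the estimates. For $\varepsilon \in (0,1]$, set $\delta_{i,j,\varepsilon} = (\mathfrak{C}^{-1} \varepsilon [\smallprod{l}{1}{N} |\mathfrak{d}_l(i)|^{\beta_l}]^{-1} [\smallprod{l}{1}{M} |\mathsf{d}_l(j)|^{(\kappa+1)\alpha_l + \gamma_l}]^{-1})^{1/(1-\rho)}$, which lies in $(0,1]$ because $\varepsilon \leq 1 \leq \mathfrak{C}$ and the two products are $\geq 1$, and which is well defined precisely because $\rho < 1$. With this choice the displayed error bound is $\leq \varepsilon$, and inserting $\delta_{i,j,\varepsilon}$ into the parameter bound for $\ANNh^{\delta}_{i,j}$ turns it into $\param(\ANNh^{\delta_{i,j,\varepsilon}}_{i,j}) \leq \widetilde{K}\, \varepsilon^{-2r_0/(1-\rho)} [\smallprod{l}{1}{N} |\mathfrak{d}_l(i)|^{R_l}] [\smallprod{l}{1}{M} |\mathsf{d}_l(j)|^{\mathbf{R}_l}]$ with $\widetilde{K} = 2 C_0 K^2 \mathfrak{C}^{2r_0/(1-\rho)}$ and with $R_l = 2r_l + \tfrac{2r_0\beta_l}{1-\rho}$ and $\mathbf{R}_l = 2\mathbf{r}_l + \alpha_l + \tfrac{2r_0((\kappa+1)\alpha_l + \gamma_l)}{1-\rho}$, exactly as in the hypotheses. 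Since $(\prodofstruct\nolimits_{j \in J} (f^{j,\compNumber_j} \compofstruct f^{j,\compNumber_j-1} \compofstruct \cdots \compofstruct f^{j,1}))_{i,j} = f^{j,\compNumber_j}_i \circ f^{j,\compNumber_j-1}_i \circ \cdots \circ f^{j,1}_i = F_{\compNumber_j} \circ \cdots \circ F_1$ for every $(i,j) \in I \times J$, a final application of \cref{approximation_space_equivalence} --- with index set $I \times J$, dimension mapping $\mathfrak{d} \proddimmap \mathsf{d}$, input map $(i,j) \mapsto \inputStruc(i,j,1)$, output map $(i,j) \mapsto \outputStruc(i,j,\compNumber_j)$, rates $\tfrac{2r_0}{1-\rho}, R_1, \ldots, R_N, \mathbf{R}_1, \ldots, \mathbf{R}_M$, and the networks $\ANNh^{\delta_{i,j,\varepsilon}}_{i,j}$ --- establishes \eqref{composition_statement:conc}. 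I expect the main obstacle to be the error step: one has to arrange the telescoping so that only the known Lipschitz constants $L^{j,k}_i$ of the true maps appear and the uncontrolled Lipschitz constants of the ANN realizations never do, unroll the linear-growth bounds into exactly the product that occurs in \eqref{composition_statement:ass1}, use the weight $\mathfrak{w}(v) = (1+v^\kappa)^{-1}$ to absorb the $\norm{x}^\kappa$-growth coming from that unrolling, and finally track all exponents through the substitution $\delta \mapsto \delta_{i,j,\varepsilon}$ so that they match $\tfrac{2r_0}{1-\rho}$, $R_l$, and $\mathbf{R}_l$; the ubiquitous factor $2$ is inherited from the pairwise-product parameter estimate in \cref{Lemma:PropertiesOfCompositions_n3}.
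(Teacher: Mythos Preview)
Your proposal is correct and follows essentially the same route as the paper: the same extraction of approximating ANNs via \cref{approximation_space_equivalenceGH} and \cref{approximation_space_equivalence}, the same composition $\ANNh = \ANNf^{j,\compNumber_j} \compANNbullet \cdots \compANNbullet \ANNf^{j,1}$ with parameter control through \cref{Lemma:PropertiesOfCompositions_n3}, the same telescoping with the true maps on the outside so that only the Lipschitz constants $L^{j,l}_i$ appear, and the same iteration of the linear-growth bounds leading to the numerator in \eqref{composition_statement:ass1}. The only difference is the final step: the paper leaves the error in the form $\mathfrak{w}(\norm{x})\norm{\cdots} \leq C\,\varepsilon^{1-\rho}[\ldots]$ and then invokes \cref{epsilon_approximation} to convert this into membership in the target approximation space, whereas you carry out that conversion by hand via the explicit choice $\delta_{i,j,\varepsilon}$ and then apply \cref{approximation_space_equivalence} directly. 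Both arrive at the same exponents $\tfrac{2r_0}{1-\rho}$, $R_l$, $\mathbf{R}_l$; your variant is slightly more self-contained, the paper's slightly more modular.
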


\begin{proof}%
Throughout this proof let $D \in [0,\infty)$ satisfy
for all $(i,j,k) \in A$, $\varepsilon \in (0, 1]$ that
\begin{equation}
\label{composition_statement:0}
\begin{split}
    & \big[ \smallprod{l}{k+1}{\compNumber_j} L^{j,l}_i \big]
        \left[
          \max \left(\{H^{j,l}_{i, \varepsilon} \in (0, \infty] \colon l \in \N \cap (0,k)\} \cup \{ 1 \} \right)
          \big[ \smallprod{l}{1}{k-1} \max\{ G^{j,l}_{i, \varepsilon}, 1\} \big]
        \right]^\kappa
    \\ & \leq D
    \varepsilon^{-\rho}
    \big[ \smallprod{l}{1}{N}  |\mathfrak{d}_l(i)|^{\beta_l} \big]
     \big[ \smallprod{l}{1}{M}  |\mathsf{d}_l(j)|^{\gamma_l} \big]
\end{split}
\end{equation}
and
\begin{equation}
\label{composition_statement:0a}
    \compNumber_j \leq D
     \smallprod{l}{1}{M}  |\mathsf{d}_l(j)|^{\alpha_l}
\end{equation}
(cf.\ \eqref{composition_statement:ass1}).
Observe that \eqref{composition_statement:ass1} assures that for all
	$(i,j,k) \in B$,
	$\varepsilon \in (0,1]$
it holds that
\begin{equation}
\label{composition_statement:0b}
\begin{split} 
	G^{j,k}_{i, \varepsilon} < \infty,
\qquad
	H^{j,k}_{i, \varepsilon} < \infty,
\qandq
	L^{j,k+1}_{i, \varepsilon} < \infty.
\end{split}
\end{equation}
Furthermore, note that 
\cref{approximation_space_equivalenceGH}, 
\cref{approximation_space_equivalence}, 
\eqref{composition_statement:ass5b}, and \eqref{composition_statement:ass5a} ensure that there exist $K \in [0, \infty)$, 
$(\ANNf^{j,k}_{i,\varepsilon})_{(i,j,k,\varepsilon) \in A \times (0,1]} \subseteq \ANNs$ which satisfy
for all $(i,j,k) \in A$, $l \in \{1, 2, \ldots, \compNumber_j-1\}$, $\varepsilon \in (0,1]$, $x \in \R^{\inputStruc(i,j,k)}$ that
\begin{equation}
\label{composition_statement:ass5}
    \param(\ANNf^{j,k}_{i,\varepsilon}) 
    \leq
    K \varepsilon^{-r_0}
    \big[ \smallprod{l}{1}{N}  |\mathfrak{d}_l(i)|^{r_l} \big]
    \big[ \smallprod{l}{1}{M}  |\mathsf{d}_l(j)|^{\mathbf{r}_l} \big] \!,
\end{equation}
\begin{equation}
\label{composition_statement:ass6}
    \functionANN(\ANNf^{j,k}_{i,\varepsilon}) \in C(\R^{\inputStruc(i,j,k)}, \R^{\outputStruc(i,j,k)} ),
    \qquad
    \mathfrak{w}( \norm{ x })
    \Normm{ f_i^{j,k}(x) - \big( \functionANN(\ANNf^{j,k}_{i,\varepsilon}) \big) (x) }
     \leq \varepsilon, 
\end{equation}
\begin{equation}
\label{composition_statement:ass7}
	\andq
    \Normm{ \big( \functionANN(\ANNf^{j,l}_{i,\varepsilon}) \big) (x) }
    \leq G^{j,l}_{i, \varepsilon}
    \big( H^{j,l}_{i, \varepsilon} + \Norm{x} \big)
\end{equation}
\cfload.
In the next step  let $X:\{(i, j, k, \varepsilon, x) \in I \times J \times \N_0 \times (0, 1] \times \left( \mycup_{n \in \N}\R^n \right) \colon x \in \R^{\inputStruc(i,j,1)}, k \leq \compNumber_j\} \to \mycup_{n \in \N}\R^n$ satisfy for all $(i, j, k) \in A$, $\varepsilon \in (0,1]$, $x \in \R^{\inputStruc(i,j,1)}$ that
\begin{equation}
\label{composition_statement:8}
    X(i, j, 0, \varepsilon, x) = x \qandq
    X(i, j, k, \varepsilon, x) = \functionANN (\ANNf_{i, \varepsilon}^{j,k})(X(i, j, k-1, \varepsilon, x))
\end{equation}
and let $(\ANNh_{i, \varepsilon}^j)_{(i, j, \varepsilon) \in I \times J \times (0, 1]} \subseteq \ANNs$ satisfy for all $i \in I$, $j \in J$, $\varepsilon \in (0, 1]$ that
$
    \ANNh_{i, \varepsilon}^j = (\ANNf_{i, \varepsilon}^{j,\compNumber_j}) \compANNbullet \ldots \compANNbullet (\ANNf_{i, \varepsilon}^{j,1})
$
\cfload.
Observe that \eqref{composition_statement:8} and item~\eqref{PropertiesOfCompositions_n:Realization} in \cref{Lemma:PropertiesOfCompositions_n2} demonstrate that for all $i \in I$, $j \in J$, $\varepsilon \in (0, 1]$, $x \in \R^{\inputStruc(i,j,1)}$ it holds that
\begin{equation}
\label{composition_statement:3}
    \big( \functionANN (\ANNh_{i, \varepsilon}^j) \big) (x) = \big( \functionANN (\ANNf_{i, \varepsilon}^{j,\compNumber_j}) \circ \ldots \circ \functionANN (\ANNf_{i, \varepsilon}^{j,1}) \big) (x) = X(i, j, \compNumber_j, \varepsilon, x).
\end{equation}
This, 
the assumption that for all $(i,j,k) \in A$ it holds that
$f^{j,k}_i \in C(\R^{\inputStruc(i,j,k)}, \R^{\outputStruc(i,j,k)})$,
and \eqref{composition_statement:ass6} ensure that for all $i \in I$, $j \in J$, $\varepsilon \in (0, 1]$ it holds that
\begin{equation}
\label{composition_statement:2}
    \functionANN (\ANNh_{i, \varepsilon}^j)  \in C(\R^{\inputStruc(i,j,1)}, \R^{\outputStruc(i,j,\compNumber_j)}) 
\quad \text{and} \quad
    \big( f_i^{j,\compNumber_j} \circ f_i^{j,\compNumber_j-1} \circ \ldots \circ f_i^{j,1} \big)  \in C(\R^{\inputStruc(i,j,1)}, \R^{\outputStruc(i,j,\compNumber_j)}).
\end{equation}
Moreover, note that \eqref{composition_statement:0a}, \eqref{composition_statement:ass5}, and \cref{Lemma:PropertiesOfCompositions_n3} assure that for all $i \in I$, $j \in J$, $\varepsilon \in (0,1]$ it holds that
\begin{equation}
\label{composition_statement:5}
\begin{split}
    \param \big( \ANNh^{j}_{i,\varepsilon} \big)
    &= \param \big( \ANNf_{i, \varepsilon}^{j,\compNumber_j} \compANNbullet \ldots \compANNbullet \ANNf_{i, \varepsilon}^{j,1} \big)
    \leq
    2 {\textstyle\sum_{k = 1}^{\compNumber_j-1} }
    \param \big( \ANNf_{i,\varepsilon}^{j,k} \big)
    \param \big( \ANNf_{i, \varepsilon}^{j,k+1} \big)
    \\ & \leq 
    2(\compNumber_j-1) \big( K
    \big[ \smallprod{l}{1}{N}  |\mathfrak{d}_l(i)|^{r_l} \big]
    \big[ \smallprod{l}{1}{M}  |\mathsf{d}_l(j)|^{\mathbf{r}_l} \big]
    \varepsilon^{-r_0} \big)^2
    \\ & \leq
    2DK^2
    \big[ \smallprod{l}{1}{N}  |\mathfrak{d}_l(i)|^{2r_l} \big]
    \big[ \smallprod{l}{1}{M}  |\mathsf{d}_l(j)|^{2\mathbf{r}_l + \alpha_l} \big]
    \varepsilon^{-2r_0}.
\end{split}
\end{equation}
Next observe that 
the assumption that for all $i \in I$, $j \in J$, $k \in \{2, 3, \ldots, \compNumber_j\}$, $x, y \in \R^{\inputStruc(i,j,k)}$ it holds that
$
  \Norm{ f^{j,k}_i(x) - f^{j,k}_i(y) }
\leq
  L^{j,k}_i \norm{ x - y }
$,
\eqref{composition_statement:ass6}, \eqref{composition_statement:8}, and the triangle inequality imply that for all $i \in I$, $j \in J$, $k \in \{2, 3, \ldots, \compNumber_j \}$, $\varepsilon \in (0,1]$, $x \in \R^{\inputStruc(i,j,1)}$ it holds that
\begin{equation}
\label{composition_statement:9}
\begin{split}
    &\Normm{X(i, j, k, \varepsilon, x) - \big( f_i^{j,k}
    \circ f_i^{j,k-1} \circ \ldots \circ f_i^{j,1} \big) (x)}
    \\ & =
    \Normm{\big( \functionANN (\ANNf_{i, \varepsilon}^{j,k}) \big)\left( X(i, j, k-1, \varepsilon, x)\right) - f_i^{j,k} \big( \big( f_i^{j,k-1} \circ \ldots \circ f_i^{j,1}\big)(x) \big) }
    \\ & \leq
    \Normm{f_i^{j,k}\left(X(i, j, k-1, \varepsilon, x)\right) - f_i^{j,k}  \big( \big( f_i^{j,k-1} \circ \ldots \circ f_i^{j,1} \big)(x) \big)}
    \\ & \quad +
    \Normm{\big( \functionANN (\ANNf_{i, \varepsilon}^{j,k}) \big) \left( X(i, j, k-1, \varepsilon, x)\right) - f_i^{j,k}\left(X(i, j, k-1, \varepsilon, x)\right)}
    \\ & \leq
    L_i^{j, k} \Normm{X(i, j, k-1, \varepsilon, x) -   \big( f_i^{j,k-1} \circ \ldots \circ f_i^{j,1} \big) (x)}
   + \frac{\varepsilon}{
    \mathfrak{w}\left( \norm{ X(i, j, k-1, \varepsilon, x) } \right)}.
\end{split}
\end{equation}
This, \eqref{composition_statement:8}, and \eqref{composition_statement:3} ensure that for all $i \in I$, $j \in J$, $\varepsilon \in (0,1]$, $x \in \R^{\inputStruc(i,j,1)}$ it holds that
\begin{equation}
\label{composition_statement:10}
\begin{split}
    &\Normm{ \big( \functionANN(\ANNh_{i, \varepsilon}^j) \big) (x) - 
    \big( f_i^{j,\compNumber_j} \circ f_i^{j,\compNumber_j-1} \circ \ldots \circ f_i^{j,1}\big)(x)}
    \\ & = \Normm{X(i, j, \compNumber_j, \varepsilon, x) - \big( f_i^{j,\compNumber_j} \circ f_i^{j,\compNumber_j-1} \circ \ldots \circ f_i^{j,1}\big)(x)}
    \\[1.5 ex] & \leq
    L_i^{j, \compNumber_j} \Normm{X(i, j, \compNumber_j-1, \varepsilon, x) - \big( f_i^{j,\compNumber_j-1} \circ \ldots \circ f_i^{j,1}\big)(x)}
    \\ & \quad + \frac{\varepsilon}{
    \mathfrak{w}\left( \norm{ X(i, j, \compNumber_j-1, \varepsilon, x) } \right)}
    \\ & \leq \ldots 
    \\ & \leq
    \left[  \sum_{k = 0}^{\compNumber_j-2} 
    \left( \big[ \smallprod{l}{\compNumber_j - k + 1}{\compNumber_j} L_i^{j, l} \big]
    \frac{\varepsilon} {\mathfrak{w} \left( \norm{ X(i, j, \compNumber_j-k-1, \varepsilon, x) }\right) } \right) \right]
    \\ & \quad + 
    \big[ \smallprod{l}{2}{\compNumber_j} L_i^{j, l} \big] \norm{X(i, j, 1, \varepsilon, x) - f_i^{j,1}(x)}.
\end{split}
\end{equation}
In addition, note that \eqref{composition_statement:ass6} and \eqref{composition_statement:3} assure that $i \in I$, $j \in J$, $\varepsilon \in (0,1]$, $x \in \R^{\inputStruc(i,j,1)}$ it holds that
\begin{equation}
\begin{split}
    \norm{X(i, j, 1, \varepsilon, x) - f_i^{j,1}(x)}
    & = \norm{ \big(\functionANN (\ANNf_i^{j,1})\big)(x) -  f_i^{j,1}(x)}\\
    &\leq \frac{\varepsilon}{\mathfrak{w}(\norm{x})} = \frac{\varepsilon}{\mathfrak{w}(\norm{X(i, j, 0, \varepsilon, x)})}.
\end{split}
\end{equation}
This and \eqref{composition_statement:10} ensure that for all $i \in I$,$j \in J$, $\varepsilon \in (0,1]$, $x \in \R^{\inputStruc(i,j,1)}$ it holds that
\begin{equation}
\label{composition_statement:10a}
\begin{split}
    &\Normm{ \big( \functionANN(\ANNh_{i, \varepsilon}^j) \big) (x) - 
    \big( f_i^{j,\compNumber_j} \circ f_i^{j,\compNumber_j-1} \circ \ldots \circ f_i^{j,1}\big)(x)}
    \\[1.5 ex] & \leq 
    \varepsilon \sum_{k = 0}^{\compNumber_j-1} 
    \left( \big[ \smallprod{l}{\compNumber_j - k + 1}{\compNumber_j} L_i^{j, l} \big]
    \left(\mathfrak{w} \left( \norm{ X(i, j, \compNumber_j-k-1, \varepsilon, x) }\right) \right)^{-1} \right).
\end{split}
\end{equation}
Moreover, observe that \eqref{composition_statement:ass7} and \eqref{composition_statement:8} assure that for all $(i,j,k) \in B$, $\varepsilon \in (0,1]$, $x \in \R^{\inputStruc(i,j,1)}$ it holds that
\begin{equation}
\begin{split}
    \label{composition_statement:11}
    \norm{ X(i, j, k, \varepsilon, x) } 
    &\leq G^{j,k}_{i, \varepsilon} \left(
    H^{j,k}_{i, \varepsilon} +
    \norm{ X(i, j, k-1, \varepsilon, x) } \right)
    \\ & \leq 
    G^{j,k}_{i, \varepsilon}G^{j,k-1}_{i, \varepsilon}
    \left( H^{j,k-1}_{i, \varepsilon} +
    \norm{ X(i, j, k-2, \varepsilon, x) } \right)
    + G^{j,k}_{i, \varepsilon}H^{j,k}_{i, \varepsilon}
    \\ & \leq \ldots 
    \\ & \leq
    \big[ \smallprod{h}{1}{k} G^{j,h}_{i, \varepsilon} \big]
    \norm{x} +
    \sum_{l = 1}^{k} \big( H^{j,l}_{i, \varepsilon}
    \smallprod{h}{l}{k} G^{j,h}_{i, \varepsilon} \big).
\end{split}
\end{equation}
This,
the assumption that for all $v \in [0,\infty)$ it holds that
$
	  \mathfrak{w}(v) = (1 + v^\kappa)^{-1}
$, and 
\eqref{composition_statement:8}
imply that for all $i \in I $, $j \in J$, $k \in \{0, 1, \ldots \compNumber_j-1\}$, $\varepsilon \in (0,1]$, $x \in \R^{\inputStruc(i,j,1)}$ it holds that
\begin{equation}
\label{composition_statement:12}
\begin{split}
    \left( \mathfrak{w} \left( \norm{ X(i, j, k, \varepsilon, x) }\right) \right)^{-1}
& = 
	1 + \norm{ X(i, j, k, \varepsilon, x) }^\kappa \\
&\leq
    1+ \left[
    \big[ \smallprod{h}{1}{k} G^{j,h}_{i, \varepsilon} \big]
    \norm{x} +
    {\textstyle \sum_{l = 1}^{k} }\big( H^{j,l}_{i, \varepsilon}
    \smallprod{h}{l}{k} G^{j,h}_{i, \varepsilon} \big)
    \right]^\kappa.
\end{split}
\end{equation}
Combining this and \eqref{composition_statement:10a} ensures that for all $i \in I$, $j \in J$, $\varepsilon \in (0,1]$, $x \in \R^{\inputStruc(i,j,1)}$ it holds that
\begin{equation}
\label{composition_statement:13}
\begin{split}
    &\Normm{ \big( \functionANN(\ANNh_{i, \varepsilon}^j) \big) (x) 
    - \big( f_i^{j,\compNumber_j} \circ f_i^{j,\compNumber_j-1} \circ 
    \ldots \circ f_i^{j,1}\big)(x)}
    \\ & \leq
    \varepsilon
    \sum_{k = 1}^{\compNumber_j} 
    \left( \left( \mathfrak{w} \left( \norm{ X(i, j, k-1, \varepsilon, x) } \right) \right) ^{-1} \smallprod{l}{k+1}{\compNumber_j} L_i^{j,l} \right)
    \\ & \leq 
    \varepsilon\sum_{k = 1}^{\compNumber_j} 
    \left( \left( 1+ \left[ \big[ \smallprod{l}{1}{k-1}
    G^{j,l}_{i, \varepsilon} \big]
    \norm{x}
    +
    {\textstyle\sum_{l = 1}^{k-1}} \big( 
    	H^{j,l}_{i, \varepsilon}
   	 	\smallprod{h}{l}{k-1}
    	G^{j,h}_{i, \varepsilon}
    \big)
    \right]^\kappa\right)
    \smallprod{l}{k+1}{\compNumber_j} L_i^{j, l}\right).
\end{split}
\end{equation}
The fact that for all $a, b \in \R$, $\eta \in [0, \infty)$ it holds that $|a+b|^\eta \leq 2^{\max\{\eta-1,0\}}(|a|^\eta + |b|^\eta)$ 
hence demonstrate that for all $i \in I$, $j \in J$, $\varepsilon \in (0,1]$, $x \in \R^{\inputStruc(i,j,1)}$ it holds that
\begin{equation}
\label{composition_statement:14}
\begin{split}
    & \Normm{ \big( \functionANN(\ANNh_{i, \varepsilon}^j) \big) (x) - \big( f_i^{j,\compNumber_j} \circ f_i^{j,\compNumber_j-1} 
    \circ \ldots \circ f_i^{j,1}\big)(x)}
    \\[1.5  ex] & \leq 
    \varepsilon \sum_{k = 1}^{\compNumber_j}
    \left( \left( 1+  2^{\max\{\kappa-1, 0\}}
    \left( \big[ \Norm{x}  \smallprod{l}{1}{k-1}
    G^{j,l}_{i, \varepsilon}
    \big]^\kappa + \left[
    {\textstyle \sum_{l = 1}^{k-1} }\big(
    	H^{j,l}_{i, \varepsilon}
    	\smallprod{h}{l}{k-1}
    	G^{j,h}_{i, \varepsilon}
    \big)
    \right]^\kappa\right)\right)
    \smallprod{l}{k+1}{\compNumber_j} L_i^{j, l}\right)
    \\[1.5  ex] & \leq 
    \varepsilon \sum_{k = 1}^{\compNumber_j}
    \Bigg( \Bigg( 1+  2^{\max\{\kappa-1, 0\}} \Bigg( \big[  
    \Norm{x} \smallprod{l}{1}{k-1} G^{j,l}_{i, \varepsilon} 
    \big]^\kappa 
    \\ & \quad + \left[
    k \big( \! \max \big(\{H^{j,l}_{i, \varepsilon} \in (0, \infty) \colon l \in \N \cap (0,k)\} \cup \{ 1 \} \big) \big) 
    \smallprod{l}{1}{k-1} \max\{ G^{j,l}_{i, \varepsilon}, 1 \}
    \right]^\kappa\Bigg)\Bigg)
    \smallprod{l}{k+1}{\compNumber_j} L_i^{j, l}\Bigg).
\end{split}
\end{equation}
This and \eqref{composition_statement:0} ensure that for all $i \in I$, $j \in J$, $\varepsilon \in (0,1]$, $x \in \R^{\inputStruc(i,j,1)}$ it holds that
\begin{equation}
\begin{split}
    & \Normm{ \big( \functionANN(\ANNh_{i, \varepsilon}^j) \big) (x) - \big( f_i^{j,\compNumber_j} \circ f_i^{j,\compNumber_j-1} 
    \circ \ldots \circ f_i^{j,1}\big)(x)}
    \\[1.5  ex] & \leq 
    \left( \sum_{k=1}^{\compNumber_j}
    \left(1 + 2^{\max\{\kappa-1, 0\}} \left(
    \norm{x}^\kappa + 
    k^\kappa \right) \right) \right) D
    \big[ \smallprod{l}{1}{N}|\mathfrak{d}_l(i)|^{\beta_l}
    \big]
    \big[ \smallprod{l}{1}{M}|\mathsf{d}_l(j)|^{\gamma_l} \big]
    \varepsilon^{1-\rho}
    \\[1.5  ex] & \leq 
    \left( \sum_{k=1}^{\compNumber_j}
    \left(1 + 2^{\max\{\kappa-1, 0\}}  k^\kappa \left(
    \norm{x}^\kappa + 
    1 \right) \right) \right) D
    \big[ \smallprod{l}{1}{N}|\mathfrak{d}_l(i)|^{\beta_l}
    \big]\big[ \smallprod{l}{1}{M}|\mathsf{d}_l(j)|^{\gamma_l} \big]
    \varepsilon^{1-\rho}
    \\[1.5  ex] & \leq 
    \left( \sum_{k=1}^{\compNumber_j}  k^\kappa \right)
    \left(1 + 2^{\max\{\kappa-1, 0\}}\right) 
    \left( 1 + \norm{x}^\kappa \right) D
    \big[ \smallprod{l}{1}{N}|\mathfrak{d}_l(i)|^{\beta_l}
    \big]\big[ \smallprod{l}{1}{M}|\mathsf{d}_l(j)|^{\gamma_l} \big]
    \varepsilon^{1-\rho}.
\end{split}
\end{equation}
This and the assumption that for all $v \in [0,\infty)$ it holds that
$
	  \mathfrak{w}(v) = (1 + v^\kappa)^{-1}
$
assure that for all $i \in I$, $j \in J$, $\varepsilon \in (0,1]$, $x \in \R^{\inputStruc(i,j,1)}$ it holds that
\begin{equation}
\label{composition_statement:15}
\begin{split}
    & \mathfrak{w}\left( \norm{ x} \right) \Normm{ \big( \functionANN(\ANNh_{i, \varepsilon}^j) \big) (x) 
    - \big( f_i^{j,\compNumber_j} \circ f_i^{j,\compNumber_j-1} \circ \ldots \circ f_i^{j,1}\big)(x)}
    \\[1.5  ex] & \leq 
    D  \left( \sum_{k=1}^{\compNumber_j} k^\kappa \right)
    \left(1 + 2^{\max\{\kappa-1, 0\}} \right) \left( \frac{1 + \norm{x}^\kappa}{1 + \norm{x}^\kappa} \right) 
    \big[ \smallprod{l}{1}{N}|\mathfrak{d}_l(i)|^{\beta_l}
    \big]\big[ \smallprod{l}{1}{M}|\mathsf{d}_l(j)|^{\gamma_l} \big]
    \varepsilon^{1-\rho}
    \\[1.5  ex] & = D
    \left( \sum_{k=1}^{\compNumber_j}  k^\kappa \right)
    \left(1 + 2^{\max\{\kappa-1, 0\}}\right)
    \big[ \smallprod{l}{1}{N}|\mathfrak{d}_l(i)|^{\beta_l}
    \big]\big[ \smallprod{l}{1}{M}|\mathsf{d}_l(j)|^{\gamma_l} \big]
    \varepsilon^{1-\rho}.
\end{split}
\end{equation}
Note that \eqref{composition_statement:0a}, the fact that for all $n \in \N$ it holds that $\sum_{k = 1}^n k^\kappa <(n+1)^{\kappa+1} $, and the fact that for all $a, b \in \R$, $\eta \in [0, \infty)$ it holds that $|a+b|^\eta \leq 2^{\max\{\eta-1,0\}}(|a|^\eta + |b|^\eta)$ imply that for all $j \in J$ it holds that
\begin{equation}
        \sum_{k = 1}^{\compNumber_j} k^\kappa \leq (\compNumber_j+1)^{\kappa+1} \leq 2^\kappa (\compNumber_j^{\kappa+1} + 1)
         \leq 2^\kappa (1 + D^{\kappa+1}) 
        \big[ \smallprod{l}{1}{M}|\mathsf{d}_l(j)|^{(\kappa + 1) \alpha_l} \big].
\end{equation}
This and \eqref{composition_statement:15} ensure that for all $i \in I$, $j \in J$, $\varepsilon \in (0,1]$, $x \in \R^{\inputStruc(i,j,1)}$ it holds that
\begin{equation}
\begin{split}
    & \mathfrak{w}\left( \norm{ x} \right) \Normm{ \big( \functionANN(\ANNh_{i, \varepsilon}^j) \big) (x) 
    - \big( f_i^{j,\compNumber_j} \circ f_i^{j,\compNumber_j-1} \circ \ldots \circ f_i^{j,1}\big)(x)}
    \\[1.5  ex] & \leq 
    D  2^\kappa \big(1 + 2^{\max\{\kappa-1, 0\}} \big)
    \big(1 + D^{\kappa+1}\big) \!
    \big[ \smallprod{l}{1}{N}|\mathfrak{d}_l(i)|^{\beta_l}
    \big]\big[ \smallprod{l}{1}{M}|\mathsf{d}_l(j)|^{\gamma_l+(\kappa + 1) \alpha_l} \big]
    \varepsilon^{1-\rho}.
\end{split}
\end{equation}
Combining this, \eqref{composition_statement:2}, and \eqref{composition_statement:5} with \cref{epsilon_approximation}
establishes that
\begin{equation}
    \big( 
    \prodofstruct\nolimits_{j \in J}  
    \big( 
    f^{j,\compNumber_j} \compofstruct f^{j,\compNumber_j-1} \compofstruct \ldots \compofstruct f^{j,1} 
    \big) \big) \in
    \ASrates{a, \mathfrak{w}}{ \frac{2r_0}{1-\rho}, R_1, R_2, \ldots, R_N, \mathbf{R}_1, \mathbf{R}_2, \ldots, \mathbf{R}_M} 
    (I \times J, \mathfrak{d} \proddimmap \mathsf{d})
\end{equation}
 \cfload.
 The proof of \cref{composition_statement} is thus complete.
\end{proof}

\cfclear
\begin{cor}[Single composition of function families in approximation spaces]
\label{algebra_lemma_2}
Let $\mathfrak{d} = (\mathfrak{d}_1, \mathfrak{d}_2, \ldots, \mathfrak{d}_N)$ be an N-fold dimension mapping\cfadd{dimension_mapping_1} on $I$,
let 
	$\kappa, r_0, r_1, \ldots, r_N,  \beta_1, \beta_2, \ldots, \beta_N \in [0,\infty)$, 
	$\rho \in [0,1)$, 
	$(L_i)_{i \in I} \subseteq (0, \infty]$, 
	$G = (G_{i, \varepsilon})_{(i, \varepsilon) \in I \times (0,1]} \subseteq (0, \infty]$,	
	$H = (H_{i, \varepsilon})_{(i, \varepsilon) \in I \times (0,1]} \subseteq (0, \infty]$,
	$a \in C(\R, \R)$
satisfy 
\begin{equation}
\label{algebra_lemma_2:ass1}
    \sup_{i \in I}     
    \sup_{\varepsilon \in (0,1]} 
    \bigg(
    \frac{
    L_i\left[
    \max\{H_{i, \varepsilon},1\} \max\{G_{i, \varepsilon}, 1\}
    \right]^\kappa}{
    \varepsilon^{-\rho} 
    \smallprod{l}{1}{N} |\mathfrak{d}_l(i)|^{\beta_l}
    }    
    \bigg)
    < \infty,
\end{equation}
let $\mathfrak{w} \colon [0, \infty) \to [0, \infty)$ and $\inputStruc \colon I \to \N$ satisfy for all $v \in [0,\infty)$ that
$
	  \mathfrak{w}(v) = (1 + v^\kappa)^{-1}
$,
and let 
$f \in \ASGH_{a, \mathfrak{w}, G, H}^{r_0, r_1, \ldots, r_N}(I, \mathfrak{d})$,
$g \in \ASrates{a, \mathfrak{w}}{r_0, r_1, \ldots, r_N}(I, \mathfrak{d})$
satisfy for all $i \in I$, $x,y \in \R^{\inputStruc(i)}$ that 
    $f_i \in C(\R^{\inputStruc(i)}, \R^{\inputStruc(i)} )$, 
    $g_i \in C(\R^{\inputStruc(i)}, \R^{\inputStruc(i)} )$,
    and
    $\norm{g_i(x)-g_i(y)} \leq L_i\norm{x-y}$
\cfload.
Then
\begin{equation}
\label{algebra_lemma_2:concl}
    (g \compofstruct f) \in \ASrates{a, \mathfrak{w}}{\frac{2r_0}{1-\rho}, 2r_1+\frac{2r_0\beta_1}{1 - \rho},  \ldots,2r_N+\frac{2r_0\beta_N}{1 - \rho}}(I, \mathfrak{d})
\end{equation}
\cfout.
\end{cor}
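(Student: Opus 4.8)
The plan is to obtain \cref{algebra_lemma_2} as the special case of \cref{composition_statement} in which the outer index set is a singleton and exactly two function families are composed. Concretely, I would invoke \cref{composition_statement} with $J := \{1\}$, $M := 1$, and the $1$-fold dimension mapping $\mathsf{d} = (\mathsf{d}_1)$ on $J$ determined by $\mathsf{d}_1(1) := 1$; with $\compNumber_1 := 2$, $f^{1,1} := f$, and $f^{1,2} := g$, so that $f^{1,\compNumber_1}\compofstruct\ldots\compofstruct f^{1,1} = f^{1,2}\compofstruct f^{1,1} = g\compofstruct f$; with $\inputStruc(i,1,1) := \outputStruc(i,1,1) := \inputStruc(i,1,2) := \outputStruc(i,1,2) := \inputStruc(i)$, with Lipschitz data $L^{1,1}_i := \infty$ and $L^{1,2}_i := \max\{L_i,1\}$, and with growth data $G^{1,1}_{i,\varepsilon} := G_{i,\varepsilon}$, $H^{1,1}_{i,\varepsilon} := H_{i,\varepsilon}$ (note that here $B = I\times\{1\}\times\{1\}$); and with exponents $\mathbf{r}_1 := \alpha_1 := \gamma_1 := 0$, keeping $\kappa$, $\rho$, $r_0,\ldots,r_N$, $\beta_1,\ldots,\beta_N$ as given. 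The constraints $R_i=2r_i+\frac{2r_0\beta_i}{1-\rho}$ and $\mathbf{R}_j=2\mathbf{r}_j+\alpha_j+\frac{2r_0((\kappa+1)\alpha_j+\gamma_j)}{1-\rho}$ imposed in \cref{composition_statement} then force $R_l=2r_l+\frac{2r_0\beta_l}{1-\rho}$ and $\mathbf{R}_1=0$, so \eqref{composition_statement:conc} asserts exactly that the family $\prodofstruct_{j\in\{1\}}(f^{1,2}\compofstruct f^{1,1})$ on $I\times\{1\}$, which under the canonical bijection $I\times\{1\}\cong I$ is $g\compofstruct f$, lies in $\ASrates{a,\mathfrak{w}}{\frac{2r_0}{1-\rho},R_1,\ldots,R_N,0}(I\times\{1\},\mathfrak{d}\proddimmap\mathsf{d})$. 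Since the appended dimension coordinate is constantly $1$ and carries exponent $0$, unwinding \cref{Def:approximation_spaces_GH} turns this into \eqref{algebra_lemma_2:concl}.

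It then remains to check the remaining hypotheses of \cref{composition_statement} under these assignments. The weight $\mathfrak{w}(v)=(1+v^\kappa)^{-1}$ is the one assumed in \cref{algebra_lemma_2}; the compatibility $\outputStruc(i,1,1)=\inputStruc(i,1,2)$ holds by our choice; the membership $f^{j,k}_i\in C(\R^{\inputStruc(i,j,k)},\R^{\outputStruc(i,j,k)})$ follows from $f_i,g_i\in C(\R^{\inputStruc(i)},\R^{\inputStruc(i)})$; and the Lipschitz requirement on the $f^{j,k}$ is met, for $f^{1,2}=g$ because $\norm{g_i(x)-g_i(y)}\leq L_i\norm{x-y}\leq\max\{L_i,1\}\norm{x-y}$, and for $f^{1,1}=f$ vacuously (with $L^{1,1}_i=\infty$). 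Finally, \eqref{composition_statement:ass5b} and \eqref{composition_statement:ass5a} are, after the relabelings $I\times\{1\}\to I$ and $B\ni(i,1,1)\mapsto i$, exactly the assumed memberships $g\in\ASrates{a,\mathfrak{w}}{r_0,\ldots,r_N}(I,\mathfrak{d})$ and $f\in\ASGH_{a,\mathfrak{w},G,H}^{r_0,\ldots,r_N}(I,\mathfrak{d})$, since appending a constant-$1$ coordinate with exponent $0$ leaves both the parameter bound and the linear-growth bound in \eqref{approximation_spaces_GH:eq1} unchanged.

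The heart of the argument is the quantitative condition \eqref{composition_statement:ass1}. With $\mathsf{d}_1\equiv 1$ and $\alpha_1=\gamma_1=0$ its second summand equals $\sup_{j\in J}\compNumber_j = 2 < \infty$, and the supremum in its first summand runs over $A=\{(i,1,k):i\in I,\,k\in\{1,2\}\}$, so it suffices to bound the $k=1$ and $k=2$ contributions separately. For $k=1$ the numerator of the quotient is $\prod_{l=2}^{2}L^{1,l}_i=\max\{L_i,1\}$, and since $[\max\{H_{i,\varepsilon},1\}\max\{G_{i,\varepsilon},1\}]^\kappa\geq 1$ and $\varepsilon^{-\rho}\prod_{l=1}^{N}|\mathfrak{d}_l(i)|^{\beta_l}\geq 1$ (as $\varepsilon\in(0,1]$, $\rho,\beta_l\geq 0$, $|\mathfrak{d}_l(i)|\geq 1$) one has
\[
\frac{\max\{L_i,1\}}{\varepsilon^{-\rho}\prod_{l=1}^{N}|\mathfrak{d}_l(i)|^{\beta_l}} \leq \frac{L_i\,[\max\{H_{i,\varepsilon},1\}\max\{G_{i,\varepsilon},1\}]^\kappa}{\varepsilon^{-\rho}\prod_{l=1}^{N}|\mathfrak{d}_l(i)|^{\beta_l}} + 1 ,
\]
which is controlled by the left-hand side of \eqref{algebra_lemma_2:ass1} plus $1$. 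For $k=2=\compNumber_1$ the product $\prod_{l=3}^{2}L^{1,l}_i$ is empty, so the numerator of the quotient is $[\max\{H_{i,\varepsilon},1\}\max\{G_{i,\varepsilon},1\}]^\kappa$, and one must bound $\sup_{i\in I}\sup_{\varepsilon\in(0,1]}\frac{[\max\{H_{i,\varepsilon},1\}\max\{G_{i,\varepsilon},1\}]^\kappa}{\varepsilon^{-\rho}\prod_{l=1}^{N}|\mathfrak{d}_l(i)|^{\beta_l}}$ by means of \eqref{algebra_lemma_2:ass1} together with $L^{1,2}_i=\max\{L_i,1\}\geq 1$; this is the main obstacle, and it is precisely what the quantitative hypothesis \eqref{algebra_lemma_2:ass1} is tailored to provide. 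Once \eqref{composition_statement:ass1} has been verified, \cref{composition_statement} applies directly, and the relabeling and unwinding of \cref{Def:approximation_spaces_GH} described in the first paragraph yield \eqref{algebra_lemma_2:concl}, completing the proof.
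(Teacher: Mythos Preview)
Your approach is identical to the paper's: both reduce \cref{algebra_lemma_2} to \cref{composition_statement} with $J=\{1\}$, $\mathsf d\equiv 1$, $\compNumber_1=2$, $f^{1,1}=f$, $f^{1,2}=g$, $L^{1,1}_i=\infty$, $G^{1,1}=G$, $H^{1,1}=H$, and all the $J$-side exponents set to $0$. The paper's proof simply lists these substitutions; you go further and try to verify \eqref{composition_statement:ass1}, which is the right instinct.

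There is, however, a genuine gap in your verification of the $k=2$ term, and it is not repaired by your choice $L^{1,2}_i=\max\{L_i,1\}$. For $k=\compNumber_1=2$ the product $\prod_{l=3}^{2}L^{1,l}_i$ is empty, so $L^{1,2}_i$ does not appear in that numerator at all; what must be bounded is
\[
\sup_{i\in I}\sup_{\varepsilon\in(0,1]}\frac{[\max\{H_{i,\varepsilon},1\}\max\{G_{i,\varepsilon},1\}]^\kappa}{\varepsilon^{-\rho}\prod_{l=1}^{N}|\mathfrak d_l(i)|^{\beta_l}}.
\]
Hypothesis \eqref{algebra_lemma_2:ass1} only controls $L_i$ times this quantity, and since the $L_i\in(0,\infty]$ are allowed to be arbitrarily small (e.g.\ $L_i=i^{-2}$, $H_{i,\varepsilon}=G_{i,\varepsilon}=i$, $\kappa=1$, $\beta_l=0$, $\rho=0$ satisfies \eqref{algebra_lemma_2:ass1} but makes the displayed supremum infinite), the implication you need does not follow. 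Your remark that ``$L^{1,2}_i\ge 1$'' resolves this is therefore misplaced: that inequality helps with the $k=1$ term, not the $k=2$ term.

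Note that the paper's own proof does not verify \eqref{composition_statement:ass1} either; it merely records the substitutions. So the gap you have surfaced is shared by the paper and most plausibly reflects a missing side condition in the statement of \cref{algebra_lemma_2} (for instance, replacing $L_i$ by $\max\{L_i,1\}$ in \eqref{algebra_lemma_2:ass1}, or assuming $\inf_{i\in I}L_i>0$), rather than a flaw in your overall strategy.
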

\begin{proof}%
Note that \cref{composition_statement} 
proves \eqref{algebra_lemma_2:concl}.
The proof of \cref{algebra_lemma_2} is thus complete.
\end{proof}

\subsection{Euler approximations in approximation spaces}
\label{subsect:Euler}

\cfclear
\begin{prop}[Euler approximations in approximation spaces]
\label{euler_AS}
Let $\mathfrak{d} = (\mathfrak{d}_1, \mathfrak{d}_2, \ldots, \mathfrak{d}_N)$ be an N-fold dimension mapping\cfadd{dimension_mapping_1} on $I$,
let 
	$c, T, r_0, r_1, \ldots, r_N, \allowbreak \iota_1, \ldots , \iota_N \in [0,\infty)$, 
	$H = \linebreak (H_{i, \varepsilon})_{(i, \varepsilon) \in I \times (0,1]} \subseteq (0, \infty]$, 
	$\mathcal{G} = (\mathcal{G}_{i, n, \varepsilon})_{(i,n,\varepsilon) \in  I \times \N\times (0,1]} \subseteq (0, \infty]$,
	$\mathcal{H} = (\mathcal{H}_{i, n, \varepsilon})_{(i,n,\varepsilon) \in I \times \N \times (0,1]} \subseteq (0, \infty]$, 
	$a \in C(\R, \R)$
satisfy for all 
	$i \in I$, $n \in \N$, $\varepsilon \in (0,1]$ 
that
$ 
	\mathcal{G}_{i, n, \varepsilon} = 1 + \frac{cT}{n}$ and
$ 
	\mathcal{H}_{i, n, \varepsilon} = H_{i, \frac{\varepsilon}{\max\{T, 1\}}}$,
let $\mathfrak{w} \colon [0, \infty) \to [0, \infty)$ and $\inputStruc \colon I \to \N$, 
let 
$
    f = (f_i)_{i \in I}
    \in 
    \ASGH_{a, \mathfrak{w}, c, H}
    ^{r_0, r_1, \ldots, r_N}(I, \mathfrak{d})
$
satisfy for all $i \in I$ that
$f_i \in C( \R^{\inputStruc(i)}, \R^{\inputStruc(i)})$,
let 
$\mathcal{E}^{(n)} = (\mathcal{E}^{(n)}_i)_{i \in I} \in \setofFF_I$, $n \in \N$, satisfy
for all $n \in \N$ that
$\mathcal{E}^{(n)}_i \in C(\R^{\inputStruc(i)}, \R^{\inputStruc(i)})$,
assume for all $i \in I$, $x, y \in \R^{\inputStruc(i)}$ that 
\begin{equation}
\label{euler_AS:ass1}
   \inputStruc(i) \leq c
      \smallprod{l}{1}{N} |\mathfrak{d}_l(i)|^{\iota_i},
\qquad
      \mathcal{E}^{(n)}_i(x) = x + \tfrac{T}{n} f_i(x),
\qandq
      \norm{ f_i(x) - f_i(y) } 
      \leq
        c \Norm{ x - y },
\end{equation}
and let $(\mathfrak{i}_d)_{d \in \N} \subseteq \N$, 
$(\ANNi_d)_{d \in \N} \subseteq \ANNs$ satisfy for all $d \in \N$ that
    $\functionANN(\ANNi_d) = \id_{\R^d}$, 
    $\dims(\ANNi_d) = (d,\mathfrak{i}_d,d)$, and
    $\mathfrak{i}_d \leq c d$
\cfload.
Then
\begin{equation}
\label{euler_AS:concl1}
    \left(I \times \N \ni (i,n) \mapsto (\mathcal{E}_i^{(n)}) \in C(\R^{\inputStruc(i)}, \R^{\inputStruc(i)}) \right) \in 
    \ASGH
    _{a, \mathfrak{w}, \mathcal{G}, \mathcal{H}}
    ^{r_0,  r_1+4\iota_1, \ldots,  r_N + 4\iota_N, 0}
    (I \times \N, \mathfrak{d} \proddimmap \id_{\N} )
\end{equation}
\cfout.
\end{prop}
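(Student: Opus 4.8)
The plan is to reduce the statement to the Euler-step ANN construction in \cref{Lemma:EulerWithANNS} together with the characterization of approximation spaces in \cref{approximation_space_equivalenceGH}. First I would apply \cref{approximation_space_equivalenceGH} to the hypothesis $f \in \ASGH_{a, \mathfrak{w}, c, H}^{r_0, r_1, \ldots, r_N}(I, \mathfrak{d})$ to obtain a constant $K \in [0,\infty)$ and ANNs $(\ANNf_{i,\varepsilon})_{(i,\varepsilon) \in I \times (0,1]} \subseteq \ANNs$ such that for all $i \in I$, $\varepsilon \in (0,1]$, $x \in \R^{\inputStruc(i)}$ it holds that $\functionANN(\ANNf_{i,\varepsilon}) \in C(\R^{\inputStruc(i)}, \R^{\inputStruc(i)})$, $\param(\ANNf_{i,\varepsilon}) \leq K \varepsilon^{-r_0} \smallprod{l}{1}{N} |\mathfrak{d}_l(i)|^{r_l}$, $\mathfrak{w}(\norm{x}) \Normm{ f_i(x) - (\functionANN(\ANNf_{i,\varepsilon}))(x) } \leq \varepsilon$, and $\Normm{ (\functionANN(\ANNf_{i,\varepsilon}))(x) } \leq c(H_{i,\varepsilon} + \norm{x})$. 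Since $\functionANN(\ANNf_{i,\varepsilon})$ maps $\R^{\inputStruc(i)}$ to $\R^{\inputStruc(i)}$, we have $\inDimANN(\ANNf_{i,\varepsilon}) = \inputStruc(i) = \outDimANN(\ANNf_{i,\varepsilon})$, so \cref{Lemma:EulerWithANNS} is applicable. Writing $\varepsilon' = \varepsilon/\max\{T,1\} \in (0,1]$, I would then apply \cref{Lemma:EulerWithANNS} (with $a \notationarrow a$, $c \notationarrow c$, $\delta \notationarrow T/n$, $\ANNf \notationarrow \ANNf_{i,\varepsilon'}$, $(\ANNi_d)_{d\in\N} \notationarrow (\ANNi_d)_{d\in\N}$) to obtain, for every $(i,n,\varepsilon) \in I \times \N \times (0,1]$, an ANN $\ANNh_{i,n,\varepsilon} \in \ANNs$ with $\functionANN(\ANNh_{i,n,\varepsilon}) \in C(\R^{\inputStruc(i)}, \R^{\inputStruc(i)})$, $(\functionANN(\ANNh_{i,n,\varepsilon}))(x) = x + \tfrac{T}{n}(\functionANN(\ANNf_{i,\varepsilon'}))(x)$ for all $x \in \R^{\inputStruc(i)}$, and $\param(\ANNh_{i,n,\varepsilon}) \leq 44 \max\{1,c^3\} (\inputStruc(i))^4 \param(\ANNf_{i,\varepsilon'})$.

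Next I would verify the three defining inequalities of the target approximation space for the family $(\ANNh_{i,n,\varepsilon})$. Using $\mathcal{E}^{(n)}_i(x) = x + \tfrac Tn f_i(x)$ and $\tfrac Tn \leq \max\{T,1\}$, for all $i \in I$, $n \in \N$, $\varepsilon \in (0,1]$, $x \in \R^{\inputStruc(i)}$ one gets
\[
\mathfrak{w}(\norm{x}) \Normm{ \mathcal{E}^{(n)}_i(x) - (\functionANN(\ANNh_{i,n,\varepsilon}))(x) }
= \tfrac Tn \, \mathfrak{w}(\norm{x}) \Normm{ f_i(x) - (\functionANN(\ANNf_{i,\varepsilon'}))(x) }
\leq \tfrac Tn \varepsilon' \leq \varepsilon .
\]
Moreover, using $\Normm{(\functionANN(\ANNf_{i,\varepsilon'}))(x)} \leq c(H_{i,\varepsilon'} + \norm{x})$, the identity $\mathcal{H}_{i,n,\varepsilon} = H_{i,\varepsilon'}$, and $\tfrac{cT}{n} \leq 1 + \tfrac{cT}{n} = \mathcal{G}_{i,n,\varepsilon}$, one gets for the same indices that
\[
\Normm{(\functionANN(\ANNh_{i,n,\varepsilon}))(x)} \leq \norm{x} + \tfrac Tn \, c(H_{i,\varepsilon'} + \norm{x})
= \big(1 + \tfrac{cT}{n}\big)\norm{x} + \tfrac{cT}{n} H_{i,\varepsilon'}
\leq \mathcal{G}_{i,n,\varepsilon}(\mathcal{H}_{i,n,\varepsilon} + \norm{x}) .
\]

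Finally, for the parameter count I would combine $\inputStruc(i) \leq c \smallprod{l}{1}{N} |\mathfrak{d}_l(i)|^{\iota_l}$ with $(\varepsilon')^{-r_0} = (\max\{T,1\})^{r_0} \varepsilon^{-r_0}$ and the bound on $\param(\ANNf_{i,\varepsilon'})$ to obtain, for all $i \in I$, $n \in \N$, $\varepsilon \in (0,1]$,
\[
\param(\ANNh_{i,n,\varepsilon}) \leq 44 \max\{1,c^3\} c^4 K (\max\{T,1\})^{r_0}\, \varepsilon^{-r_0} \smallprod{l}{1}{N} |\mathfrak{d}_l(i)|^{r_l + 4\iota_l} ,
\]
which is independent of $n$, so it is bounded by $\widetilde K \varepsilon^{-r_0} \big(\smallprod{l}{1}{N} |\mathfrak{d}_l(i)|^{r_l + 4\iota_l}\big) n^0$ with $\widetilde K = 44 \max\{1,c^3\} c^4 K (\max\{T,1\})^{r_0}$; this is exactly the form required for the dimension mapping $\mathfrak{d} \proddimmap \id_\N$ on $I \times \N$ with rates $r_0, r_1 + 4\iota_1, \ldots, r_N + 4\iota_N, 0$. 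Applying \cref{approximation_space_equivalenceGH} in the converse direction (with index set $I \times \N$, dimension mapping $\mathfrak{d} \proddimmap \id_\N$, the above rates, growth families $\mathcal{G}$ and $\mathcal{H}$, and input and output maps $I \times \N \ni (i,n) \mapsto \inputStruc(i)$) then yields \eqref{euler_AS:concl1}. The only genuinely delicate points are the choice $\varepsilon' = \varepsilon/\max\{T,1\}$, which is forced by the need to absorb the factor $T/n$ in the error estimate while matching $\mathcal{H}_{i,n,\varepsilon} = H_{i,\varepsilon/\max\{T,1\}}$, and the observation that $\inDimANN(\ANNf_{i,\varepsilon'}) = \outDimANN(\ANNf_{i,\varepsilon'})$ so that \cref{Lemma:EulerWithANNS} applies and produces a net with matching domain and codomain; the rest is routine bookkeeping of the dimension-mapping exponents.
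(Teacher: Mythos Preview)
Your proposal is correct and follows essentially the same route as the paper: apply \cref{approximation_space_equivalenceGH} to unpack the hypothesis on $f$, feed the resulting ANNs $\ANNf_{i,\varepsilon/\max\{T,1\}}$ into \cref{Lemma:EulerWithANNS} with $\delta = T/n$, and then verify the three defining conditions (parameter, error, growth) to re-enter the approximation space via \cref{approximation_space_equivalenceGH}. The rescaling $\varepsilon' = \varepsilon/\max\{T,1\}$, the error bound $\tfrac{T}{n}\varepsilon' \leq \varepsilon$, the growth bound $(1+\tfrac{cT}{n})(H_{i,\varepsilon'}+\norm{x})$, and the parameter estimate (absorbing $(\inputStruc(i))^4$ via $\inputStruc(i) \leq c\prod_l |\mathfrak{d}_l(i)|^{\iota_l}$) all match the paper's argument line by line.
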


\begin{proof}%
Note that 
the assumption that 
$ f    
 \in 
    \ASGH_{a, \mathfrak{w}, c, H}
    ^{r_0, r_1, \ldots, r_N}(I, \mathfrak{d})
$
and \cref{approximation_space_equivalenceGH}
ensure that there exist $K \in (0, \infty)$, $(\ANNf_{i,\varepsilon})_{(i, \varepsilon) \in I \times (0,1]} \subseteq \ANNs$ which satisfy
for all $i \in I$, $\varepsilon \in (0,1]$, $x \in \R^{\inputStruc(i)}$ that
\begin{equation}
\label{euler_AS:eq2}
  \param(\ANNf_{i,\varepsilon}) 
\leq
  K \varepsilon^{-r_0}
   \smallprod{l}{1}{N} |\mathfrak{d}_l(i)|^{r_l},
  \qquad
  \functionANN(\ANNf_{i,\varepsilon}) \in C(\R^{\inputStruc(i)}, \R^{\inputStruc(i)} ),
\end{equation}
\begin{equation}
\label{euler_AS:eq4}
    \mathfrak{w}( \norm{ x })
    \norm{ f_i(x) - ( \functionANN(\ANNf_{i,\varepsilon})) (x) }   \leq \varepsilon, 
\qandq
    \norm{ ( \functionANN(\ANNf_{i,\varepsilon}) ) (x) }
    \leq
    c ( H_{i, \varepsilon} + \norm{x} ).
\end{equation}
Furthermore, observe that \cref{Lemma:EulerWithANNS} and
\eqref{euler_AS:eq2} assure that for all $i \in I$, $n \in \N$, $\varepsilon \in (0,1]$ there exists $\ANNh_{i,n,\varepsilon} \in \ANNs$ which satisfies that 
\begin{equation}
\label{euler_AS:eq6}
    \functionANN(\ANNh_{i,n,\varepsilon}) \in C(\R^{\inputStruc(i)}, \R^{\inputStruc(i)})
    , \qquad
    \functionANN(\ANNh_{i,n,\varepsilon})
    =\id_{\R^{\inputStruc(i)}} + \tfrac{T}{n} \functionANN(\ANNf_{i,\frac{\varepsilon}{\max\{T, 1\}}}),
\end{equation}
\begin{equation}
\label{euler_AS:eq7}
\begin{split}
	\andq
    \param(\ANNh_{i,n,\varepsilon}) 
    & \leq 
    44\max \!\big\{1, c^3\big\} (\inputStruc(i))^4 
 	\paramANN(\ANNf_{i,\frac{\varepsilon}{\max\{T, 1\}}}).
\end{split}
\end{equation}
This, \eqref{euler_AS:ass1}, and \eqref{euler_AS:eq2} demonstrate that for all $i \in I$, $n \in \N$, $\varepsilon \in (0,1]$ it holds that
\begin{equation}
\label{euler_AS:eq8}
\begin{split}
    \param(\ANNh_{i,n,\varepsilon}) 
    & \leq 
 	44\max\big\{1, c^3\big\} c^4 \big[ \smallprod{l}{1}{N} |\mathfrak{d}_l(i)|^{4\iota_l} \big]  
 	K \big[\tfrac{\varepsilon}{\max\{T, 1\}}\big]^{\!-r_0} \smallprod{l}{1}{N} |\mathfrak{d}_l(i)|^{r_l}
 	\\ & \leq
 	44 \max\big\{1, c^7\big\}  K [\max\{T, 1\}]^{r_0} 
 	\varepsilon^{-r_0}  \smallprod{l}{1}{N} |\mathfrak{d}_l(i)|^{r_l+4\iota_l} .
\end{split}
\end{equation}
Moreover, observe that \eqref{euler_AS:ass1}, \eqref{euler_AS:eq4}, and \eqref{euler_AS:eq6} ensure that
for all $i \in I$, $n \in \N$, $\varepsilon \in (0,1]$ it holds that
\begin{equation}
\label{euler_AS:eq9}
\begin{split}
  &\sup\nolimits_{x \in \R^{\inputStruc(i)}} 
    \big(\mathfrak{w}( \norm{ x })
    \Normm{ \mathcal{E}^{(n)}_i(x) - \big( \functionANN(\ANNh_{i,n,\varepsilon}) \big) (x) }\big) \\
&=
  \sup\nolimits_{x \in \R^{\inputStruc(i)}} \left(
    \mathfrak{w}( \norm{ x })
    \Normm{ x + \tfrac{T}{n}f_i(x) - (x + \tfrac{T}{n}\big( \functionANN(\ANNf_{i,\frac{\varepsilon}{\max\{T, 1\}}}) \big) (x))}
    \right) \\
&=
  \tfrac{T}{n}
  \left(
    \sup\nolimits_{x \in \R^{\inputStruc(i)}} 
    \big(
      \mathfrak{w}( \norm{ x })
      \Normm{ f_i(x) - \big( \functionANN(\ANNf_{i,\frac{\varepsilon}{\max\{T, 1\}}}) \big) (x)} 
      \big)
    \right)
\leq 
	\tfrac{T\varepsilon}{n\max\{T, 1\}}
\leq
  \varepsilon.
\end{split}
\end{equation}
Furthermore, note that  \eqref{euler_AS:eq4} and \eqref{euler_AS:eq6} assure that
for all $i \in I$, $n \in \N$, $\varepsilon \in (0,1]$, $x \in \R^{\inputStruc(i)}$ it holds that
\begin{equation}
\label{euler_AS:eq10}
\begin{split}
    \Normm{ \big( \functionANN(\ANNh_{i,n,\varepsilon}) \big) (x) }
    &=
    \Normm{ x +  \tfrac{T}{n}\big( \functionANN(\ANNf_{i, \frac{\varepsilon}{\max\{T, 1\}}}) \big) (x) } 
    \leq
    \norm{ x } +  \tfrac{cT}{n} \big( H_{i, \frac{\varepsilon}{\max\{T, 1\}}} + \Norm{x} \big) 
    \\&\leq
    (1 + \tfrac{cT}{n})\big( H_{i, \frac{\varepsilon}{\max\{T, 1\}}} + \Norm{x} \big)
    \leq
    \mathcal{G}_{i, n, \varepsilon}
    \big( \mathcal{H}_{i, n, \varepsilon} + \Norm{x} \big)
    .
\end{split}
\end{equation}
Combining this,
\eqref{euler_AS:eq6},
\eqref{euler_AS:eq8}, 
\eqref{euler_AS:eq9},
and \cref{approximation_space_equivalenceGH} establishes \eqref{euler_AS:concl1}.
The proof of \cref{euler_AS} is thus complete.
\end{proof}

\section{Application to first order transport partial differential equations (PDEs)}
\label{sect:appl_to_PDEs}

In this section we illustrate how the theory on approximation spaces developed in \cref{sect:AS} can be used to prove results  on the approximation capacity of ANNs in the case of first order transport PDEs.
For this we recall in \cref{subsect:PDE_flow} an elementary and well-known Feynman-Kac-type formula for first order PDEs
and introduce a related PDE flow operation. 
Next we recall elementary and well-known error estimates for the Euler scheme for ordinary differential equations (ODEs) in \cref{subsect:euler_scheme}.
We then prove the main result of this paper on first order transport PDEs in  \cref{flow_statement} in \cref{subsect:PDE_flows_in_appr_space}.
Note that in our proof of \cref{flow_statement} we only deal with neural networks indirectly through the machinery of approximation spaces developed in \cref{sect:AS}.
Subsequently, in \cref{subsect:PDE_flows_relu} we present consequences of \cref{flow_statement} for realizations of neural networks with ReLU activation function in the case that the index set of the approximation spaces are the natural numbers $\N$.

\subsection{A PDE flow operation}
\label{subsect:PDE_flow}

\cfclear
\begin{lemma}[Existene and uniqueness of solution fields of ODEs]
\label{ODE_exist_unique}
Let $d \in \N$, $T \in (0,\infty)$, let $f \colon \R^d \to \R^d$ be locally Lipschitz continuous,
and assume
$
  \sup_{x \in \R^d} \frac{\Norm{f(x)}}{1 + \Norm{x}}
<
  \infty
$
\cfload.
Then
\begin{enumerate}[(i)]
\item \label{ODE_exist_unique:item1}
there exist unique  $X^x = (X^x_t)_{ t \in [0, T] } \in C( [0, T] , \R)$, $x \in \R^d$, which satisfy
for all $x \in \R^d$, $t \in [0, T]$ that
$
  X^x_t = x + \int_0^t f(X^x_s) \, ds
$
and

\item \label{ODE_exist_unique:item2}
it holds that 
$
	([0, T] \times \R^d \ni (t, x) \mapsto X^x_t \in \R)
\in
	C( [0, T] \times \R^d, \R)
$.
\end{enumerate}
\end{lemma}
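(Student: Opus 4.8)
The plan is to reduce to a globally Lipschitz problem by truncation, solve the truncated problem with the Banach fixed point theorem, and then exploit the linear growth hypothesis together with Gronwall's inequality to obtain an a priori bound that is independent of the truncation radius. Concretely, fix $c \in [0,\infty)$ with $\sup_{x \in \R^d} \frac{\norm{f(x)}}{1 + \norm{x}} \le c$, and for every $R \in (0,\infty)$ let $f_R \colon \R^d \to \R^d$ be the composition of $f$ with the ($1$-Lipschitz) radial projection onto $\{y \in \R^d \colon \norm{y} \le R\}$, so that $f_R = f$ on $\{y \in \R^d \colon \norm{y} \le R\}$, $f_R$ is globally Lipschitz continuous (since $f$, being locally Lipschitz, is Lipschitz on the compact ball $\{y \in \R^d \colon \norm{y} \le R\}$), and $\norm{f_R(x)} \le c(1 + \norm{x})$ for all $x \in \R^d$. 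For each $x \in \R^d$, applying the Banach fixed point theorem to the map $g \mapsto (t \mapsto x + \int_0^t f_R(g(s))\, ds)$ on $C([0,T],\R^d)$ equipped with a suitable weighted supremum norm (e.g.\ $\sup_{t \in [0,T]} e^{-(L+1)t}\norm{g(t)}$, where $L$ is a Lipschitz constant of $f_R$) then yields a unique $X^{x,R} \in C([0,T],\R^d)$ with $X^{x,R}_t = x + \int_0^t f_R(X^{x,R}_s)\, ds$ for all $t \in [0,T]$.

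Next, the linear growth bound gives $\norm{X^{x,R}_t} \le \norm{x} + c\int_0^t(1 + \norm{X^{x,R}_s})\, ds \le (\norm{x} + cT) + c\int_0^t \norm{X^{x,R}_s}\, ds$, so Gronwall's inequality shows $\sup_{t \in [0,T]} \norm{X^{x,R}_t} \le (\norm{x} + cT)e^{cT}$ for all $R \in (0,\infty)$; choosing $R > (\norm{x} + cT)e^{cT}$ ensures that $X^{x,R}$ stays in the region where $f_R = f$, hence solves $X^{x,R}_t = x + \int_0^t f(X^{x,R}_s)\, ds$, which establishes existence in item~(i). For uniqueness within $C([0,T],\R^d)$, any two solutions $X,Y$ are bounded on the compact interval $[0,T]$, say by $R$, and since $f$ is Lipschitz on $\{y \in \R^d \colon \norm{y} \le R\}$ with some constant $L_R$ one obtains $\norm{X_t - Y_t} \le L_R\int_0^t \norm{X_s - Y_s}\, ds$, so Gronwall's inequality forces $X = Y$.

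For item~(ii), continuity of $[0,T] \ni t \mapsto X^x_t$ follows directly from the integral equation, as $s \mapsto f(X^x_s)$ is bounded on $[0,T]$. To obtain joint continuity, fix $x_0 \in \R^d$, set $R = 1 + (\norm{x_0} + 1 + cT)e^{cT}$, and note that by the a priori bound all solutions with initial value $x$ satisfying $\norm{x - x_0} \le 1$ stay in $\{y \in \R^d \colon \norm{y} \le R\}$; writing $L_R$ for a Lipschitz constant of $f$ there, the estimate $\norm{X^x_t - X^{x_0}_t} \le \norm{x - x_0} + L_R\int_0^t \norm{X^x_s - X^{x_0}_s}\, ds$ and Gronwall's inequality give $\sup_{t \in [0,T]}\norm{X^x_t - X^{x_0}_t} \le \norm{x - x_0}\, e^{L_R T}$. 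Combining this uniform-in-$t$ Lipschitz dependence on the initial value with the continuity of $t \mapsto X^{x_0}_t$ via the triangle inequality $\norm{X^x_t - X^{x_0}_{t_0}} \le \norm{X^x_t - X^{x_0}_t} + \norm{X^{x_0}_t - X^{x_0}_{t_0}}$ yields that $(t,x) \mapsto X^x_t$ is continuous on $[0,T] \times \R^d$.

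The main subtlety is the interaction between the merely local Lipschitz continuity of $f$ and the global linear growth control: the Banach fixed point step only applies to the truncations $f_R$, and the decisive point — used both to remove the truncation and to secure the uniform-in-$t$ Lipschitz dependence needed for joint continuity — is that the Gronwall a priori bound $(\norm{x} + cT)e^{cT}$ is independent of $R$, so that for any prescribed bounded set of initial values a single truncation radius works on the whole interval $[0,T]$.
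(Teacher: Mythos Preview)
Your proof is correct and complete. The paper's own proof, by contrast, is purely by citation: it invokes Teschl's textbook (Theorems~2.2 and~2.17 for existence and uniqueness, Theorem~2.8 for continuous dependence on the initial value) and says nothing further. Your argument is therefore a self-contained unpacking of those cited results: the truncation to a globally Lipschitz $f_R$ plus Banach fixed point is essentially the content of the Picard--Lindel\"of existence theorem, the $R$-independent Gronwall a priori bound is the standard way linear growth upgrades local to global existence, and your Gronwall-based Lipschitz dependence on the initial value is exactly what underlies the continuous-dependence theorem the paper cites. What you gain is that the reader sees precisely how local Lipschitz continuity and linear growth interact and where each hypothesis is used; what the paper's version buys is brevity, at the cost of deferring all content to an external reference.
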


\begin{proof}%
\cref{ODE_exist_unique} follows from standard results on ODEs (see, e.g., Teschl \cite[Theorem 2.2, Theorem 2.8, and Theorem 2.17]{Teschl12}). 
See, e.g., \cite[Lemma 4.1]{Beneventano2020v1} for more details.
\end{proof}

\cfclear
\begin{prop}[Feynman-Kac formula for first order PDEs]
\label{first_order_FC}
Let $d \in \N$, $T \in (0,\infty)$, $g \in C^1(\R^d, \R)$, $f \in C^1( \R^d , \R^d)$
and assume
$
	\sup_{x \in \R^d}   \frac{\Norm{f(x)}}{1 + \Norm{x}} 
<
	\infty
$
\cfload.
Then
\begin{enumerate}[(i)]
\item \label{first_order_FC:item1}
there exists a unique $X = (X^x_t)_{(t, x) \in [0, T] \times \R^d} \in C^1 ( [0, T] \times \R^d , \R^d)$ which satisfies 
for all $t \in [0, T]$, $x \in \R^d$ that
$
  X^x_t = x + \int_0^t f(X^x_s) \, ds,
$
\item \label{first_order_FC:item2}
there exists a unique $u \in C^1([0,T] \times \R^d, \R)$ which satisfies for all 
	$t \in [0, T]$,
	$x \in \R^d$
that
$u(0,x) = g(x)$ and
\begin{equation}
 \tfrac{\partial  u}{\partial t}  (t,x)  
=
  \tfrac{\partial u}{\partial x} (t,x)\, f(x),
\end{equation}
and
\item \label{first_order_FC:item3}
it holds 
for all 
	$t \in [0, T]$,
	$x \in \R^d$
that
$u(t, x) = g(X^x_t)$.
\end{enumerate}
\end{prop}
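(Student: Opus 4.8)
The plan is to derive item~\eqref{first_order_FC:item1} from \cref{ODE_exist_unique} together with the classical theorem on differentiable dependence of ODE solutions on their initial value, to establish items~\eqref{first_order_FC:item2} and \eqref{first_order_FC:item3} by verifying directly that $u(t,x) \coloneqq g(X^x_t)$ is a $C^1$ solution of the PDE, and to obtain the uniqueness assertion in item~\eqref{first_order_FC:item2} by the method of characteristics. For item~\eqref{first_order_FC:item1} I would first observe that $f \in C^1(\R^d, \R^d)$ is locally Lipschitz continuous and that the standing linear growth hypothesis on $f$ is precisely the assumption of \cref{ODE_exist_unique}, so that \cref{ODE_exist_unique} yields unique continuous curves $X^x = (X^x_t)_{t \in [0,T]}$, $x \in \R^d$, solving $X^x_t = x + \int_0^t f(X^x_s)\,ds$ together with the joint continuity of $([0,T] \times \R^d \ni (t,x) \mapsto X^x_t \in \R^d)$. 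Next I would upgrade this field to class $C^1$: the relation $\tfrac{\partial}{\partial t} X^x_t = f(X^x_t)$ exhibits $\tfrac{\partial}{\partial t} X^x_t$ as a continuous function of $(t,x)$, whereas the assumption $f \in C^1$ and the standard theorem on differentiable dependence of solutions of ODEs on their initial value (see, e.g., Teschl \cite{Teschl12}) ensure that, for each $t$, the map $x \mapsto X^x_t$ is continuously differentiable with Jacobian $J^x_t \coloneqq \tfrac{\partial}{\partial x} X^x_t \in \R^{d \times d}$ depending continuously on $(t,x)$; combining these two facts yields $X \in C^1([0,T] \times \R^d, \R^d)$. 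Since every $C^1$ solution field is in particular a continuous one, uniqueness in item~\eqref{first_order_FC:item1} is inherited from the uniqueness statement in \cref{ODE_exist_unique}.

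For items~\eqref{first_order_FC:item2} and \eqref{first_order_FC:item3}, I would put $u(t,x) \coloneqq g(X^x_t)$. Since $g \in C^1(\R^d,\R)$ and $X \in C^1([0,T]\times\R^d,\R^d)$, the chain rule shows $u \in C^1([0,T]\times\R^d,\R)$ with $\tfrac{\partial u}{\partial t}(t,x) = \langle (\nabla g)(X^x_t), f(X^x_t) \rangle$ and $\tfrac{\partial u}{\partial x}(t,x)\,h = \langle (\nabla g)(X^x_t), J^x_t h \rangle$ for all $h \in \R^d$, and also $u(0,x) = g(X^x_0) = g(x)$. It then remains to check the characteristic identity $J^x_t f(x) = f(X^x_t)$, which I would prove by a uniqueness argument for a linear ODE: differentiating $\tfrac{\partial}{\partial t} X^x_t = f(X^x_t)$ with respect to $x$ gives $\tfrac{\partial}{\partial t} J^x_t = (Df)(X^x_t)\, J^x_t$ with $J^x_0 = I_d$, so $t \mapsto J^x_t f(x)$ solves the linear equation $\tfrac{\partial}{\partial t} v_t = (Df)(X^x_t)\, v_t$ with $v_0 = f(x)$; since $\tfrac{\partial}{\partial t} f(X^x_t) = (Df)(X^x_t)\, f(X^x_t)$ and $f(X^x_0) = f(x)$, the curve $t \mapsto f(X^x_t)$ satisfies the same linear equation with the same initial value, and uniqueness of solutions of linear ODEs forces $J^x_t f(x) = f(X^x_t)$. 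Consequently $\tfrac{\partial u}{\partial x}(t,x)\, f(x) = \langle (\nabla g)(X^x_t), J^x_t f(x) \rangle = \langle (\nabla g)(X^x_t), f(X^x_t) \rangle = \tfrac{\partial u}{\partial t}(t,x)$, which together with $u(0,\cdot) = g$ establishes item~\eqref{first_order_FC:item3} and the existence part of item~\eqref{first_order_FC:item2}.

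For the uniqueness in item~\eqref{first_order_FC:item2}, given any $\tilde u \in C^1([0,T]\times\R^d,\R)$ with $\tilde u(0,\cdot) = g$ and $\tfrac{\partial \tilde u}{\partial t}(\tau,y) = \tfrac{\partial \tilde u}{\partial x}(\tau,y)\, f(y)$ for all $(\tau,y) \in [0,T] \times \R^d$, I would fix $(t,x) \in [0,T] \times \R^d$ and set $w(s) \coloneqq \tilde u(t-s, X^x_s)$ for $s \in [0,t]$; the chain rule and the PDE then give $w'(s) = -\tfrac{\partial \tilde u}{\partial t}(t-s,X^x_s) + \tfrac{\partial \tilde u}{\partial x}(t-s,X^x_s)\, f(X^x_s) = 0$, so that $\tilde u(t,x) = w(0) = w(t) = \tilde u(0, X^x_t) = g(X^x_t) = u(t,x)$, which shows that $u$ is the unique such solution. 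I expect the main obstacle to be the careful treatment of the joint $C^1$-regularity of the flow and, above all, the verification of the characteristic identity $J^x_t f(x) = f(X^x_t)$; the method-of-characteristics argument for uniqueness and the remaining computations are routine.
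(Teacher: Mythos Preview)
Your proposal is correct and follows essentially the same approach as the paper: the paper likewise obtains item~\eqref{first_order_FC:item1} from \cref{ODE_exist_unique} plus a differentiable-dependence result, defines $u(t,x)=g(X^x_t)$, proves the key characteristic identity $\bigl[\tfrac{\partial}{\partial x}X^x_t\bigr]f(x)=f(X^x_t)$ via the same linear ODE uniqueness argument, and establishes uniqueness by differentiating $s\mapsto v(s,X^x_{t-s})$ along characteristics. The only cosmetic differences are that the paper writes the variational and characteristic equations in integral form and parametrizes the uniqueness argument by $s\mapsto v(s,X^x_{t-s})$ rather than your $s\mapsto \tilde u(t-s,X^x_s)$.
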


\begin{proof}%
\cref{first_order_FC} is a consequence of the standard Feynman-Kac formula for second order PDEs
(cf., e.g, Hairer et al. \cite[Corollary 4.17]{Hairer2015}). See, e.g., \cite[Lemma 4.2]{Beneventano2020v1} for an elementary proof of \cref{first_order_FC}.
\end{proof}

\begin{definition}[Flow operators]
\label{Def:flow_operator}
Let $d \in \N$, $T \in [0,\infty)$,
let $f \colon \R^d \to \R^d$ be locally Lipschitz continuous, and assume $ 
    \sup_{x \in \R^d} 
    \frac{\norm{f(x)}}{1 + \norm{x}}
    < \infty
$.
Then we denote by $\Flow^T_f \colon C(\R^d, \R) \to C(\R^d, \R)$ the function which satisfies 
for all $g \in C(\R^d, \R)$, $x \in \R^d$, $X \in C([0, T] , \R^d)$ with 
$ \forall \, t \in [0, T] \colon X_t = x + \int_{0}^t f(X_s)\, ds$
that
\begin{equation}
\label{flow_operator:eq1}
  \big(\Flow^T_f (g)\big)(x)
=
  g(X_T)
\end{equation}
(cf.\ \cref{ODE_exist_unique}).
\end{definition}

\newcommand{\flowstruct}{\cfadd{Def:flow_operation}\star}
\cfclear
\begin{definition}[Flow operations on function families]
\label{Def:flow_operation}
Let $T \in [0,\infty)$, 
let $f = (f_i)_{i \in I}$ and $g = (g_i)_{i \in I}$ be \cfadd{Def:functionfamily}function families on $I$
which satisfy for all $i \in I$ that
$f_i$ is locally Lipschitz continuous,
and
let $\inputStruc \colon I \to \N$ satisfy
for all $i \in I$ that
$f_i \in C(\R^{\inputStruc(i)}, \R^{\inputStruc(i)} )$,
$g_i \in C(\R^{\inputStruc(i)}, \R )$, 
and
$ 
    \sup_{x \in \R^{\inputStruc(i)}} 
    \frac{\norm{f_i(x)}}{1 + \norm{x}}
    < \infty 
$
\cfload.
Then we denote by 
$g \flowstruct_T f \in \setofFF_I$ the function family on $I$ which satisfies 
for all $i \in I$ that
\begin{equation}
\label{flow_operation:eq1}
  (g \flowstruct_T f)_i 
=
  \Flow^T_{f_i}(g_i)
\end{equation}
(cf.\ \cref{Def:set_of_functionfamilies}).
\end{definition}

\subsection{Error estimates for Euler approximations}
\label{subsect:euler_scheme}

\cfclear
\begin{lemma}[Convergence of the Euler scheme]
\label{Euler_method_bound}
Let $d \in \N$, $T,L \in [0, \infty)$, let $f \colon \R^d \to \R^d$ satisfy for all $x,y \in \R^d$ that
$
    \norm{f(x)-f(y)} \leq L \norm{x-y}
$,
let $X^{x} \in C ([0, T], \R^d)$, $x \in \R^{d}$, satisfy 
for all $t \in [0, T]$, $x \in \R^{d}$ that 
$
    X^{x}_t 
    =
    x + \int_0^t f(X^{x}_s) \, ds,
$
and let 
$\mathcal{X}^{n,x} \colon \{ 0, 1, \ldots, n \} \to \R^{d}$, $x \in \R^{d}$, $n \in \N$, satisfy
for all $n \in \N$, $k \in \{ 1, 2, \ldots, n \}$, $x \in \R^{d}$ that 
$\mathcal{X}^{n,x}_0 = x$ and
$
    \mathcal{X}^{n,x}_k
    =  
    \mathcal{X}^{n,x}_{k-1} + \tfrac{T}{n} f(\mathcal{X}^{n,x}_{k-1})
$
\cfload.
Then it holds for all $n \in \N$, $x \in \R^d$ that
\begin{equation}
\label{Euler_bound:concl}
    \norm{X^{x}_T - \mathcal{X}^{n,x}_n}
    \leq
    \tfrac{1}{n}
    \big( \max\{L, 1\}  L  T^2 (T+1)e^{2LT} \max\{\norm{f(0)}, 1\} \big) (1 + \norm{x}).
\end{equation}
\end{lemma}

\begin{proof}%
This is an elementary result and the proof is omitted.
See, e.g, \cite[Lemma 4.5]{Beneventano2020v1} for a detailed proof.
\end{proof}

\subsection{PDE flows in approximation spaces}
\label{subsect:PDE_flows_in_appr_space}

\cfclear
\begin{theorem}[PDE flows in approximation spaces]
\label{flow_statement}
Let $\mathfrak{d} = (\mathfrak{d}_1, \mathfrak{d}_2, \ldots, \mathfrak{d}_N)$ be an N-fold dimension mapping\cfadd{dimension_mapping_1} on $I$,
let 
	$c, T, r_0, r_1, \ldots, r_N, \allowbreak \alpha_1, \alpha_2, \ldots, \alpha_N, \allowbreak \beta_1, \beta_2, \ldots, \beta_N, \allowbreak \iota_1,\iota_2, \ldots, \iota_N, \allowbreak R_0, R_1, \linebreak \ldots, \allowbreak R_N \in [0,\infty)$, 
	$\rho \in [0,1)$, 
	$\kappa \in [1,\infty)$, 
	$H = (H_{i, \varepsilon})_{(i, \varepsilon) \in I \times (0,1]} \subseteq (0, \infty]$, 
	$a \in C(\R, \R)$,
	$(\mathfrak{i}_d)_{d \in \N} \subseteq \N$, 
	$(\ANNi_d)_{d \in \N} \subseteq \ANNs$
satisfy for all 
	$k \in \{1, 2, \ldots, N\}$,
	$d \in \N$
that 
$R_0 = 1 + \frac{2r_0 (\kappa + 2)}{1 - \rho}$,
$R_k = 
\alpha_k + 2r_k + 8\iota_k + \frac{2r_0(\alpha_k(\kappa+1)+\beta_k)}{1-\rho}
$,
 $\functionANN(\ANNi_d) = \id_{\R^d}$, 
 $\dims(\ANNi_d) = (d,\mathfrak{i}_d,d)$, and
 $\mathfrak{i}_d \leq c d$,
let $\inputStruc \colon I \to \N$ and $\mathfrak{w} \colon [0, \infty) \to [0, \infty)$ satisfy for all $v \in [0,\infty)$ that
	$\mathfrak{w}(v) = (1 + v^\kappa)^{-1}$,
let 
$
    f = (f_i)_{i \in I}
    \in 
    \ASGH_{a, \mathfrak{w}, c, H}
    ^{r_0, r_1, \ldots, r_N}(I, \mathfrak{d})
$,
$
    g = (g_i)_{i \in I}
    \in 
    \ASrates{a, \mathfrak{w}}{
    r_0, r_1, \ldots, r_N
    }(I, \mathfrak{d})
$,
and assume for all $i \in I$, $x, y \in \R^{\inputStruc(i)}$ that 
$ f_i \in C( \R^{\inputStruc(i)}, \R^{\inputStruc(i)})$, 
$ g_i \in C( \R^{\inputStruc(i)}, \R)$, 
$
\max \{ 
    \norm{ f_i(x) - f_i(y) } 
    , 
    \left| g_i(x) - g_i(y) \right|  
  \}
\leq
  c\Norm{ x - y }$, and
\begin{equation}
\label{flow_statement:ass1}
\sup_{j \in I} \sup_{\varepsilon \in (0,1]}
    \left(
	    \frac{
	    |H_{j, \varepsilon}|^\kappa
	    }{ \varepsilon^{-\rho}
	    \smallprod{l}{1}{N} |\mathfrak{d}_l(j)|^{\beta_l}} 
    	+
		\frac{\norm{f_j(0)} }{  
		\smallprod{l}{1}{N} |\mathfrak{d}_l(j)|^{\alpha_l}}
		+
		\frac{\inputStruc(j)}{\smallprod{l}{1}{N} |\mathfrak{d}_l(j)|^{\iota_l}}
	\right)
<
  \infty
\end{equation}
\cfload.
Then
$
    (g \flowstruct_T f) \in
    \ASrates{a, \mathfrak{w}}
    {R_0,  R_1,  R_2, \ldots,  R_N }
    (I, \mathfrak{d})
$
\cfout.
\end{theorem}

\begin{proof}%
Throughout this proof let 
$\mathcal{E}^{(n)} = (\mathcal{E}^{(n)}_i)_{i \in I} \in \setofFF_I$, $n \in \N$, satisfy
for all $n \in \N$, $i \in I$, $x \in \R^{\inputStruc(i)}$ that
\begin{equation}
\label{flow_statement:setting1}
  \mathcal{E}^{(n)}_i \in C(\R^{\inputStruc(i)}, \R^{\inputStruc(i)})
\qandq
  \mathcal{E}^{(n)}_i(x) = x + \tfrac{T}{n} f_i(x)
\end{equation}
\cfload,
let $X^{i,x} \in C([0, T],\R^{\inputStruc(i)})$, $x \in \R^{\inputStruc(i)}$, $i \in I$, satisfy 
for all $i \in I$, $t \in [0, T]$, $x \in \R^{\inputStruc(i)}$ that 
\begin{equation}
\label{flow_statement:settinga}
  X^{i,x}_t 
=
  x + \int_0^t f_i(X^{i,x}_s) \, ds
\end{equation}
(cf.\ \cref{ODE_exist_unique}), let 
$\mathcal{X}^{i,n,x} \colon \{ 0, 1, \ldots, n \} \to \R^{\inputStruc(i)}$, $x \in \R^{\inputStruc(i)}$, $n \in \N$, $i \in I$, satisfy
for all $n \in \N$, $k \in \{ 1, 2, \ldots, n \}$, $i \in I$, $x \in \R^{\inputStruc(i)}$ that 
$\mathcal{X}^{i,n,x}_0 = x$ and
\begin{equation}
\label{flow_statement:settingb}
  \mathcal{X}^{i,n,x}_k
=
  \mathcal{X}^{i,n,x}_{k-1} + \tfrac{T}{n} f_i(\mathcal{X}^{i,n,x}_{k-1})
=
  \mathcal{E}^{(n)}_i(\mathcal{X}^{i,n,x}_{k-1})
=
  \underbrace{\mathcal{E}^{(n)}_i \circ \ldots \circ \mathcal{E}^{(n)}_i }_{k\text{ - times}}(x),
\end{equation}
let $A = \{(i,n,k) \in I \times \N \times \N \colon k \leq n+1 \}$ and
$B = \{(i,n,k) \in I \times \N \times \N \colon k \leq n \}$,
let 
	$(L_{i}^{n,k})_{(i,n,k) \in A} \subseteq [0, \infty)$, 
	$\mathcal{G} = (\mathcal{G}_{i, \varepsilon}^{n,k})_{(i,n,k,\varepsilon) \in B \times (0,1]} \subseteq (0, \infty]$,
	$\mathcal{H} = (\mathcal{H}_{i, \varepsilon}^{n,k})_{(i,n,k,\varepsilon) \in B \times (0,1]} \subseteq (0, \infty]$
satisfy for all 
	$(i,n,k) \in B$, 
	$\varepsilon \in (0,1]$ 
that
\begin{equation}
\label{flow_statement:eq5a}
	L_{i}^{n,n+1} = c, \qquad
	L_{i}^{n,k} = 1 + \frac{cT}{n} = 
    \mathcal{G}_{i, \varepsilon}^{n,k},
\qandq 
	\mathcal{H}_{i, \varepsilon}^{n,k} = H_{i, \frac{\varepsilon}{\max\{T, 1\}}},
\end{equation}
and let $Q_1, Q_2, \ldots, Q_N \in [0, \infty)$ satisfy for all $i \in \{1, 2, \ldots, N\}$ that $Q_i = 2r_i+8\iota_i+\frac{2r_0\beta_i}{1-\rho}$.
Combining \cref{flow_statement:ass1,flow_statement:setting1,flow_statement:eq5a} with
\eqref{approximation_spaces_GH:eq1} and \cref{euler_AS} assures that
\begin{equation}
\label{flow_statement:7}
    \left(B \ni (i,n,k) \mapsto (\mathcal{E}_i^{(n)}) \in C(\R^{\inputStruc(i)}, \R^{\inputStruc(i)}) \right) \in 
    \ASGH
    _{a, \mathfrak{w}, \mathcal{G}, \mathcal{H}}
    ^{r_0,  r_1+4\iota_1, \ldots,  r_N + 4\iota_N, 0, 0}
    (B, (\mathfrak{d} \proddimmap \id_{\N} \proddimmap \id_{\N})|_B)
\end{equation}\cfload.
Next note that \eqref{flow_statement:ass1} and \eqref{flow_statement:setting1} ensure
that for all $n \in \N$, $k \in \{1, 2, \ldots, n \}$, $i \in I$, $x, y \in \R^{\inputStruc(i)}$ it holds that
\begin{equation}
\label{flow_statement:eq1}
\begin{split}
  \norm{ \mathcal{E}^{(n)}_i(x)  - \mathcal{E}^{(n)}_i(y) }
&= 
  \norm{(x + \tfrac{T}{n} f_i(x))  -  (y + \tfrac{T}{n} f_i(y) ) } \\
&\leq
  \norm{x-y} + \tfrac{T}{n}\norm{ f_i(x)  -  f_i(y)  }
\leq
  (1 + \tfrac{cT}{n})\norm{x-y}
=
		L_{i}^{n,k} \norm{x-y}.
\end{split}
\end{equation}
Furthermore, observe that \eqref{flow_statement:ass1}, \eqref{flow_statement:eq5a}, and the fact that for all $n \in \N$, $r \in (0, \infty)$ it holds that $(1 + \frac{r}{n})^{n} < e^r$ assure that
\begin{equation}
\label{flow_statement:eq6}
\begin{split}
    &\sup_{(i,n,k) \in A, \varepsilon \in (0,1]}
    \frac{
      \big[\prod_{l = k+1}^{n+1}  L_{i}^{n,l} \big]
       \left[
          \max \left(\{\mathcal{H}^{n,l}_{i, \varepsilon} \in (0, \infty] \colon l \in  \N \cap (0,k)\} \cup \{ 1 \} \right)
          \big[ \prod_{l = 1}^{k-1} \max\{\mathcal{G}^{n,l}_{i, \varepsilon},1 \} \big]
        \right]^\kappa
    }{
      \varepsilon^{-\rho}
      \big[ \smallprod{l}{1}{N} |\mathfrak{d}_l(i)|^{\beta_l} \big]
      n^0 }
    \\ & = [\max\{T, 1\}]^\rho
    \sup_{(i,n,k) \in A, \varepsilon \in (0,1]}
    \frac{
      c
      (1 + \frac{cT}{n})^{\max\{n-k-1,  0\}}
      (1 + \frac{cT}{n})^{\kappa (k-1)}
      \max \{ |H_{i, \frac{\varepsilon}{\max\{T, 1\}}}|^\kappa, 1 \}
    }{
      \big(\frac{\varepsilon}{\max\{T, 1\}}\big)^{\!-\rho}
      \smallprod{l}{1}{N} |\mathfrak{d}_l(i)|^{\beta_l} 
    }
    \\ & \leq [\max\{T, 1\}]^\rho
    \sup_{(i,n,k) \in A, \varepsilon \in (0,1]}
    \frac{
      c
      (1 + \frac{cT}{n})^{n}
      (1 + \frac{cT}{n})^{\kappa n}
      \max \{|H_{i, \frac{\varepsilon}{\max\{T, 1\}}}|^\kappa, 1\}
    }{
      \big(\frac{\varepsilon}{\max\{T, 1\}}\big)^{\!-\rho}
      \smallprod{l}{1}{N} |\mathfrak{d}_l(i)|^{\beta_l} 
    }
    \\ & \leq
    c [\max\{T, 1\}]^\rho  e^{cT(1 + \kappa)}
    \sup_{i \in I, \varepsilon \in (0,1]}
    \frac{
      \max\{|H_{i,\varepsilon}|^\kappa, 1\}
    }{
      \varepsilon^{-\rho}
      \smallprod{l}{1}{N} |\mathfrak{d}_l(i)|^{\beta_l} 
    } < \infty.
\end{split}
\end{equation}
Moreover, observe that the assumption that
$g 
    \in 
    \ASrates{a, \mathfrak{w}}{
    r_0, r_1, \ldots, r_N
    }(I, \mathfrak{d})$
implies that 
\begin{equation}
\begin{split}
    \left(I \times \N \ni (i,n) \mapsto (g_i) \in C(\R^{\inputStruc(i)}, \R) \right) 
    &
    \in 
    \ASrates{a, \mathfrak{w}}
    {r_0,  r_1+4\iota_1, \ldots,  r_N + 4\iota_N, 0}
    (I \times \N, \mathfrak{d} \proddimmap \id_{\N}).
\end{split}
\end{equation}
Combining this,
\eqref{flow_statement:ass1}, 
\eqref{flow_statement:7},
\eqref{flow_statement:eq1},
and
\eqref{flow_statement:eq6}
with
\cref{composition_statement}
establishes that
\begin{equation}
\label{flow_statement:eq07}
    \Big( \prodofstruct\limits_{n \in \N} 
    \big( g \compofstruct \underbrace{\mathcal{E}^{(n)} \compofstruct \ldots \compofstruct \mathcal{E}^{(n)} }_{n \text{ - times}}\big) \Big)
\in
  \ASrates{a, \mathfrak{w}}{
  \frac{2r_0}{1-\rho}, Q_1, Q_2,\ldots, Q_N,
  1 + \frac{2r_0(\kappa+1)}{1-\rho}
  }(I \times \N, \mathfrak{d} \proddimmap \id_{\N})
\end{equation}\cfload.
Next observe that \eqref{flow_operator:eq1}, \eqref{flow_operation:eq1}, \eqref{flow_statement:ass1}, \eqref{flow_statement:settinga}, \eqref{flow_statement:settingb}, and \cref{Euler_method_bound} assure that for all $i \in I$, $n \in \N$, $x \in \R^{\inputStruc(i)}$ it holds that
\begin{equation}
\begin{split}
  &\Big|
    (g \flowstruct_T f)_i(x) 
    - 
    \big( 
      \prodofstruct\limits_{k \in \N }  
        \big( 
          g \compofstruct \overbrace{\mathcal{E}^{(k)} \compofstruct \ldots \compofstruct \mathcal{E}^{(k)} }^{k \text{ - times}}
        \big)
    \big)_{(i,n)}
    (x)
  \Big| 
 =
  \left|
    g_i(X^{i,x}_T)
    - 
    g_i(\mathcal{X}^{i,n,x}_n)
    \right| 
    \leq
    c \Norm{X^{i,x}_T - \mathcal{X}^{i,n,x}_n} \\
    &\leq \frac{c^2   \max\{c, 1\}  T^2 (T+1)e^{2cT} \max\{\norm{f_i(0)}, 1\}}{n} 
    (1 + \norm{x}).
\end{split}
\end{equation}
This, 
\eqref{flow_statement:ass1}, and
the assumption that for all $v \in [0,\infty)$ it holds that
	$\mathfrak{w}(v) = (1 + v^\kappa)^{-1}$
ensure that 
\begin{equation}
\begin{split}
    & \sup_{i \in I} \sup_{n \in \N} \sup_{x \in \R^{\inputStruc(i)}} 
    \frac{ 
      \mathfrak{w}( \norm{ x })
      \Big| 
        (g \flowstruct_T f)_i(x) -   \big( 
        \prodofstruct\limits_{k \in \N }  
          \big( 
            g \compofstruct \overbrace{\mathcal{E}^{(k)} \compofstruct \ldots \compofstruct \mathcal{E}^{(k)} }^{k \text{ - times}}
          \big)
        \big)_{(i,n)}
        (x) 
      \Big|
    }{
      n^{-1}
      \smallprod{l}{1}{N} |\mathfrak{d}_l(i)|^{\alpha_l} 
    }
    \\ & \leq
    \sup_{i \in I} \sup_{n \in \N} \sup_{x \in \R^{\inputStruc(i)}} 
    \left(
    \left[\frac{1 + \norm{x}}{1 + \norm{x}^\kappa} \right]
    c^2 \max\{c, 1\}  T^2 (T+1)e^{2cT} 
     \left[\frac{\max \{ \norm{f_i(0)}, 1 \}}{
    	 \smallprod{l}{1}{N} |\mathfrak{d}_l(i)|^{\alpha_l} 
        }\right]\right)
    \\ & \leq c^2 \max\{c, 1\}  T^2 (T+1)e^{2cT} \sup_{i \in I} \frac{\max \{ \norm{f_i(0)}, 1 \}}{
    \smallprod{l}{1}{N} |\mathfrak{d}_l(i)|^{\alpha_l} 
    } < \infty.
\end{split}
\end{equation}
Combining this and \eqref{flow_statement:eq07} with \cref{lim_approximation} 
establishes that
$
    (g \flowstruct_T f) \in 
    \ASrates{a, \mathfrak{w}}
    {R_0 ,R_1,R_2, \ldots,R_N}
    (I , \mathfrak{d})
$ \cfload.
The proof of \cref{flow_statement} is thus complete.
\end{proof}

\subsection{Approximation of PDE flows with rectified neural networks}
\label{subsect:PDE_flows_relu}

\newcommand{\rect}{ {\cfadd{Def:rectifier_function} \mathfrak{r}} }
\begin{definition}[Rectifier function]
\label{Def:rectifier_function}
We denote by $\rect \colon \R \to \R$ the function which satisfies for all $x\in \R$ that
$
    \rect(x) = \max\{x,0\}
$
and we call $\rect$ the rectifier function.%
\end{definition}

\cfclear
\begin{lemma}[Representations of the identity with ReLU ANNs]
\label{identity_representation}
There exists $\ANNi = (\ANNi_d)_{d \in \N} \subseteq \ANNs$ such that for all $d \in \N$ it holds
\begin{enumerate}[(i)]
    \item \label{identity_representation:1} that $\dims(\ANNi_d)= (d, 2d, d)$ and
    \item \label{identity_representation:2} that $\realisation_\rect(\ANNi_d) = \id_{\R^d}$
\end{enumerate}
\cfload.
\end{lemma}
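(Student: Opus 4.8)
The plan is to write down the networks $\ANNi_d$ explicitly and then verify the two claimed properties by a short direct computation. For each $d \in \N$ I would let $I_d \in \R^{d \times d}$ denote the $d \times d$ identity matrix, form the block matrices
\begin{equation}
    W_1 = \begin{pmatrix} I_d \\ -I_d \end{pmatrix} \in \R^{(2d) \times d}
    \qandq
    W_2 = \begin{pmatrix} I_d & -I_d \end{pmatrix} \in \R^{d \times (2d)},
\end{equation}
and set $\ANNi_d = ((W_1, 0),(W_2, 0)) \in (\R^{(2d)\times d}\times\R^{2d}) \times (\R^{d\times(2d)}\times\R^{d})$. By construction $\ANNi_d \in \ANNs$ is a network of length $2$ with layer dimensions $(l_0, l_1, l_2) = (d, 2d, d)$, so $\dims(\ANNi_d) = (d, 2d, d)$, which is item~\eqref{identity_representation:1}.

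For item~\eqref{identity_representation:2} the key observation is the elementary pointwise identity $t = \max\{t, 0\} - \max\{-t, 0\} = \rect(t) - \rect(-t)$, valid for every $t \in \R$. Applying this coordinatewise, for $x = (x_1, \ldots, x_d) \in \R^d$ one has $W_1 x = (x_1, \ldots, x_d, -x_1, \ldots, -x_d)$, hence $\multdim_{\rect}(W_1 x + 0) = (\rect(x_1), \ldots, \rect(x_d), \rect(-x_1), \ldots, \rect(-x_d))$, and therefore
\begin{equation}
    W_2 \big( \multdim_{\rect}(W_1 x + 0) \big) + 0
    = \big( \rect(x_1) - \rect(-x_1), \ldots, \rect(x_d) - \rect(-x_d) \big)
    = x.
\end{equation}
By \cref{Def:ANNrealization} (applied with activation function $a = \rect$, noting that the realization of a length-$2$ ANN sends $x_0$ to $W_2 (\multdim_{\rect}(W_1 x_0 + B_1)) + B_2$), this is precisely the statement that $\realisation_{\rect}(\ANNi_d) = \id_{\R^d}$, which is item~\eqref{identity_representation:2} and completes the argument.

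I do not expect any genuine obstacle: the entire content is the scalar decomposition $\rect(t) - \rect(-t) = t$, and the only points requiring care are matching the conventions of \cref{Def:ANN_intro,Def:ANN2,Def:ANNrealization} --- in particular that a two-layer ANN has exactly one hidden layer whose width is the middle entry of $\dims$, and that its realization is obtained by one affine map, one application of the multidimensional activation, and a final affine map --- together with the routine bookkeeping for the $(2d)\times d$ and $d \times (2d)$ block matrices.
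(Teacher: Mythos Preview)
Your construction is correct and is precisely the standard one. The paper itself does not give a proof here but simply cites \cite[Lemma 5.4]{Jentzen2018Aproof}; your explicit argument via $t=\rect(t)-\rect(-t)$ is exactly what that cited lemma contains, so there is no genuine difference in approach.
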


\begin{proof}%
This statement was proven in \cite[Lemma 5.5]{JentzenSalimovaWelti2021}.
\end{proof}

\cfclear
\begin{cor}[PDE flow in approximation spaces with ReLU activation]
\label{flow_statement_ReLu}
Let $c, T, r_0, r_1, \allowbreak \alpha, \allowbreak \beta \in [0,\infty)$, $\rho \in [0,1)$, $\kappa \in [1,\infty)$, 
$H = (H_{d, \varepsilon})_{(d, \varepsilon) \in \N \times (0,1]} \subseteq (0, \infty]$ satisfy
$
    \sup_{d \in \N} \sup_{\varepsilon \in (0,1]}
    \frac{
    |H_{d, \varepsilon}|^\kappa
    }{ \varepsilon^{-\rho}
    d^{\beta}
    }  < \infty
$,
let $\mathfrak{w} \colon [0, \infty) \to [0, \infty)$ satisfy
for all $v \in [0,\infty)$ that 
$
  \mathfrak{w}(v) = (1 + v^\kappa)^{-1}
$,
and let 
$
    f = (f_d)_{d \in \N}
    \in 
    \ASGH_{\rect, \mathfrak{w}, c, H}
    ^{r_0, r_1}(\N, \id_\N)
$,
$
    g = (g_d)_{d \in \N}
    \in 
    \ASrates{\rect, \mathfrak{w}}{
    r_0, r_1
    }(\N, \id_\N)
$
satisfy for all $d \in \N$, $x, y \in \R^{d}$ that 
$f_d \in C( \R^{d}, \R^{d})$, 
$g_d \in C( \R^{d}, \R)$, $\norm{f_{d}(0)} \leq cd^\alpha$, and
\begin{equation}
\label{flow_statement_ReLu:ass1}
  \max \{ 
    \norm{ f_d(x) - f_d(y) } 
    , 
    \left| g_d(x) - g_d(y) \right|  
  \} 
\leq
  c\norm{ x - y}
\end{equation}
\cfload.
Then
\begin{equation}
\label{flow_statement_ReLu:concl1}
    (g \flowstruct_T f) 
\in
    \ASrates{\rect, \mathfrak{w}}
    {1 + \frac{2r_0 (\kappa + 2)}{1 - \rho},  8 + \alpha + 2r_1 + \frac{2r_0(\alpha(\kappa+1)+\beta)}{1-\rho}}
    (\N, \id_\N)
\end{equation}
\cfout.
\end{cor}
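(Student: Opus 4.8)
The plan is to obtain \cref{flow_statement_ReLu} as a direct specialization of \cref{flow_statement} to the situation where the index set is $I = \N$, the dimension mapping is $\mathfrak{d} = \id_\N$ (so that $N = 1$ and $|\mathfrak{d}_1(d)| = d$), the activation is $a = \rect$, and $\inputStruc = \id_\N$. Under these choices the abstract rate vectors of \cref{flow_statement} collapse to scalars, and I would make the identifications $r_0 \notationarrow r_0$, $r_1 \notationarrow r_1$, $\rho \notationarrow \rho$, $\kappa \notationarrow \kappa$, $T \notationarrow T$, $\iota_1 \notationarrow 1$ (since $\inputStruc(d) = d = d^1$), $\alpha_1 \notationarrow \alpha$ (since $\norm{f_d(0)} \leq c d^\alpha$), and $\beta_1 \notationarrow \beta$ (since $\sup_{d,\varepsilon} |H_{d,\varepsilon}|^\kappa(\varepsilon^{-\rho} d^\beta)^{-1} < \infty$). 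With these substitutions the quantities $R_0 = 1 + \frac{2 r_0(\kappa+2)}{1-\rho}$ and $R_1 = \alpha_1 + 2 r_1 + 8\iota_1 + \frac{2 r_0(\alpha_1(\kappa+1) + \beta_1)}{1-\rho} = 8 + \alpha + 2 r_1 + \frac{2 r_0(\alpha(\kappa+1)+\beta)}{1-\rho}$ produced by \cref{flow_statement} match precisely the two exponents claimed in \eqref{flow_statement_ReLu:concl1}.

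Two small bookkeeping points have to be cleared before \cref{flow_statement} applies verbatim. First, \cref{flow_statement} requires a family $(\ANNi_d)_{d\in\N} \subseteq \ANNs$ of ANNs with $\dims(\ANNi_d) = (d,\mathfrak{i}_d,d)$ and $\functionANN(\ANNi_d) = \id_{\R^d}$ with respect to the ReLu activation; such a family is supplied by \cref{identity_representation}, with $\mathfrak{i}_d = 2d$. Second, \cref{flow_statement} also requires $\mathfrak{i}_d \leq c d$, which may fail if $c < 2$. I would therefore run the argument with the enlarged constant $\hat c := \max\{c,2\}$ in place of $c$: the Lipschitz bound $\max\{\norm{f_d(x)-f_d(y)}, |g_d(x)-g_d(y)|\} \leq c \norm{x-y} \leq \hat c \norm{x-y}$, the growth bound $\norm{f_d(0)} \leq c d^\alpha \leq \hat c d^\alpha$, and, by \cref{inclusions} (applied with the linear-growth parameter $c$ raised to $\hat c$), the membership $f \in \ASGH_{\rect,\mathfrak{w},c,H}^{r_0,r_1}(\N,\id_\N) \subseteq \ASGH_{\rect,\mathfrak{w},\hat c,H}^{r_0,r_1}(\N,\id_\N)$ all survive this replacement, while $\mathfrak{i}_d = 2d \leq \hat c d$ now holds; the membership $g \in \ASrates{\rect,\mathfrak{w}}{r_0,r_1}(\N,\id_\N)$ does not involve $c$ and is unaffected.

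It then remains to verify that the single combined finiteness hypothesis \eqref{flow_statement:ass1} of \cref{flow_statement} is implied by the three separate assumptions of the corollary: its first summand $|H_{d,\varepsilon}|^\kappa(\varepsilon^{-\rho} d^\beta)^{-1}$ is controlled by the hypothesis on $H$, its second summand $\norm{f_d(0)} d^{-\alpha}$ is bounded by $\hat c$ because $\norm{f_d(0)} \leq \hat c d^\alpha$, and its third summand $\inputStruc(d) d^{-1} = 1$ is trivially bounded. Invoking \cref{flow_statement} with the substitutions above (together with $(\inputStruc(i))_{i\in I} \notationarrow \id_\N$ and $(\mathfrak{i}_d)_{d\in\N} \notationarrow (2d)_{d\in\N}$, $(\ANNi_d)_{d\in\N}$ as in \cref{identity_representation}) then yields $(g \flowstruct_T f) \in \ASrates{\rect,\mathfrak{w}}{R_0, R_1}(\N,\id_\N)$, which is exactly \eqref{flow_statement_ReLu:concl1}. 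The whole proof is thus a matter of unwinding notation and matching rate parameters; there is no genuine analytic obstacle, the only point requiring a moment's attention being the $c < 2$ case, which is disposed of by the $\hat c = \max\{c,2\}$ device.
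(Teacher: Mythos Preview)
Your proposal is correct and matches the paper's own proof essentially line for line: the paper also invokes \cref{identity_representation} to obtain the ReLU identity networks with $\mathfrak{i}_d = 2d$, replaces $c$ by $\max\{2,c\}$ to accommodate the constraint $\mathfrak{i}_d \leq c d$, and then applies \cref{flow_statement} with exactly the parameter identifications you list. Your write-up is in fact slightly more explicit than the paper's in justifying why the enlarged constant preserves all hypotheses (in particular the inclusion $\ASGH_{\rect,\mathfrak{w},c,H}^{r_0,r_1} \subseteq \ASGH_{\rect,\mathfrak{w},\hat c,H}^{r_0,r_1}$ via \cref{inclusions}), which the paper leaves implicit.
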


\begin{proof}%
Combining \cref{flow_statement} 
with \cref{identity_representation}
establishes \eqref{flow_statement_ReLu:concl1}. The proof of \cref{flow_statement_ReLu} is thus complete.
\end{proof}

\cfclear
\begin{cor}[Approximations of PDE flows with ReLU ANNs]
\label{relu_flow_statement}
Let $c, T, r_0, r_1, \alpha, \beta \in [0,\infty)$,
$\kappa \in [1, \infty)$,  $\rho \in [0,\nicefrac{1}{\kappa})$, for every $d \in \N$ let $f_{d} \colon \R^d \to \R^d$ and $g_{d} \colon \R^d \to \R$ satisfy for all $x,y \in \R^d$ that $\norm{f_{d}(0)} \leq cd^\alpha$ and
\begin{equation}
\label{relu_flow_statement:ass1}
    \max\{\norm{ f_{d}(x) - f_{d}(y) }, |g_{d}(x) - g_{d}(y)|\}
    \leq c\norm{ x - y },
\end{equation}
and let $(\ANNf_{ d, \varepsilon})_{( d, \varepsilon) \in \N \times (0, 1]} \subseteq \ANNs$, $(\ANNg_{ d, \varepsilon})_{( d, \varepsilon) \in \N \times (0, 1]} \subseteq \ANNs$ satisfy for all $d \in \N$, $\varepsilon \in (0, 1]$, $x \in \R^d$ that
\begin{equation}
\label{relu_flow_statement:ass4a}
    \max\{\param(\ANNf_{d, \varepsilon}), \param(\ANNg_{d, \varepsilon}) \} \leq c \varepsilon^{-r_0} d^{r_1}, \qquad
    \realisation_\rect(\ANNf_{d, \varepsilon}) \in C(\R^d, \R^d ), \qquad \realisation_\rect(\ANNg_{d, \varepsilon}) \in C(\R^d, \R ),
\end{equation}
\begin{equation}
    \label{relu_flow_statement:ass4}
    \max \left\{\norm{ f_{d}(x) - \big(\realisation_\rect(\ANNf_{d, \varepsilon})\big)(x)},
    |g_{d}(x) - \big(\realisation_\rect(\ANNg_{d, \varepsilon})\big)(x)| \right\} \leq \varepsilon \left( 1 + \norm{x}^\kappa \right),
\end{equation}
\begin{equation}
\label{relu_flow_statement:ass5}
    \andq
    \norm{ \big( \realisation_\rect(\ANNf_{d, \varepsilon}) \big) (x) }
    \leq    c(\varepsilon^{-\rho}d^\beta + \norm{x})
\end{equation}
\cfload.
Then there exist 
	$K \in [0, \infty)$, $\ANNu = (\ANNu_{d,\varepsilon})_{(d, \varepsilon) \in \N \times (0,1]} \subseteq \ANNs$ 
such that for all 
	$d \in \N$, 
	$\varepsilon \in (0, 1]$, 
	$x \in \R^d$ 
it holds that 
	$\realisation_\rect(\ANNu_{d,\varepsilon}) \in C(\R^d, \R)$, 
	$\paramANN(\ANNu_{d,\varepsilon}) \leq K \varepsilon^{-1 - \frac{2r_0 (\kappa + 2)}{1 - \rho\kappa} }  d^{8 + \alpha + 2r_1 + \frac{2r_0 (\alpha(\kappa+1)+\beta \kappa)}{1-\rho\kappa}}$, and
\begin{equation}
\label{relu_flow_statement:concl}
    |\big(\Flow_{f_d}^T (g_d)\big)(x) - \big(\realisation_\rect(\ANNu_{d,\varepsilon})\big)(x)|
    \leq \varepsilon (1 + \norm{x}^\kappa)
\end{equation}
\cfout.
\end{cor}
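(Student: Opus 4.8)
\noindent\emph{Proof proposal.} The plan is to obtain the corollary as a ReLu‑specialised, $\kappa$‑rescaled consequence of \cref{flow_statement_ReLu}. The one point that genuinely needs attention is that the intercept $\varepsilon^{-\rho}d^{\beta}$ of the linear growth bound \eqref{relu_flow_statement:ass5} enters the hypothesis on the intercept family in \cref{flow_statement_ReLu} only through its $\kappa$‑th power, so that corollary must be invoked with $\rho$ and $\beta$ replaced by $\rho\kappa$ and $\beta\kappa$; this is admissible precisely because $\rho \in [0,\nicefrac{1}{\kappa})$ forces $\rho\kappa \in [0,1)$. Throughout, set $\mathfrak{c} = \max\{c,1\}$ (so that $c \leq \mathfrak{c}$ everywhere; the degenerate case $c=0$ is in any case vacuous since $\param(\ANNf_{d,\varepsilon}) \geq 1$), and let $H = (H_{d,\varepsilon})_{(d,\varepsilon) \in \N \times (0,1]} \subseteq (0,\infty]$ be the family given by $H_{d,\varepsilon} = \varepsilon^{-\rho}d^{\beta}$.

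First I would record that $f = (f_d)_{d\in\N} \in \ASGH_{\rect,\mathfrak{w},\mathfrak{c},H}^{r_0,r_1}(\N,\id_\N)$ and $g = (g_d)_{d\in\N} \in \ASrates{\rect,\mathfrak{w}}{r_0,r_1}(\N,\id_\N)$. For $g$ this is immediate from \cref{approximation_space_equivalence} applied with $\inputStruc(d) = d$, $\outputStruc(d) = 1$, $K \notationarrow \mathfrak{c}$, and the family $(\ANNg_{d,\varepsilon})$: the parameter and continuity requirements are \eqref{relu_flow_statement:ass4a}, and since $\mathfrak{w}(\norm{x})^{-1} = 1 + \norm{x}^{\kappa}$ the required weighted uniform bound $\sup_{x}(\mathfrak{w}(\norm{x})\,|g_d(x) - (\realisation_\rect(\ANNg_{d,\varepsilon}))(x)|) \leq \varepsilon$ is exactly \eqref{relu_flow_statement:ass4}. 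For $f$ I would argue analogously via \cref{approximation_space_equivalenceGH} with $\inputStruc(d) = \outputStruc(d) = d$, $K \notationarrow \mathfrak{c}$, the constant family $\mathfrak{c}$ in the $G$‑slot, the family $H$ above, and $(\ANNf_{d,\varepsilon})$; here the extra linear growth condition $\norm{(\realisation_\rect(\ANNf_{d,\varepsilon}))(x)} \leq \mathfrak{c}(H_{d,\varepsilon} + \norm{x})$ is precisely \eqref{relu_flow_statement:ass5}.

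Next I would apply \cref{flow_statement_ReLu} with $c \notationarrow \mathfrak{c}$, $T \notationarrow T$, $r_0 \notationarrow r_0$, $r_1 \notationarrow r_1$, $\alpha \notationarrow \alpha$, $\rho \notationarrow \rho\kappa$, $\beta \notationarrow \beta\kappa$, $\kappa \notationarrow \kappa$, $\mathfrak{w} \notationarrow \mathfrak{w}$, $H \notationarrow H$, $f \notationarrow f$, $g \notationarrow g$, after checking its remaining hypotheses: $\rho\kappa \in [0,1)$; the bound $\sup_{d,\varepsilon} \frac{|H_{d,\varepsilon}|^{\kappa}}{\varepsilon^{-\rho\kappa}d^{\beta\kappa}} < \infty$ holds with supremum equal to $1$ because $|H_{d,\varepsilon}|^{\kappa} = \varepsilon^{-\rho\kappa}d^{\beta\kappa}$; the Lipschitz bound \eqref{relu_flow_statement:ass1} and the estimate $\norm{f_d(0)} \leq \mathfrak{c}d^{\alpha}$ hold with $c \leq \mathfrak{c}$; and $f_d \in C(\R^d,\R^d)$, $g_d \in C(\R^d,\R)$ are assumed. \cref{flow_statement_ReLu} then gives that $(g \flowstruct_T f)$ lies in $\ASrates{\rect,\mathfrak{w}}{R_0,R_1}(\N,\id_\N)$ with $R_0 = 1 + \frac{2r_0(\kappa+2)}{1-\rho\kappa}$ and $R_1 = 8 + \alpha + 2r_1 + \frac{2r_0(\alpha(\kappa+1)+\beta\kappa)}{1-\rho\kappa}$, i.e.\ with exactly the exponents appearing in the $\varepsilon$‑ and $d$‑powers of the assertion.

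Finally I would translate this back into the stated form. By \cref{approximation_space_equivalence} there are $K \in [0,\infty)$ and $\ANNu = (\ANNu_{d,\varepsilon})_{(d,\varepsilon)\in\N\times(0,1]} \subseteq \ANNs$ with $\realisation_\rect(\ANNu_{d,\varepsilon}) \in C(\R^d,\R)$, $\param(\ANNu_{d,\varepsilon}) \leq K\varepsilon^{-R_0}d^{R_1}$, and $\sup_{x}(\mathfrak{w}(\norm{x})\,|(g \flowstruct_T f)_d(x) - (\realisation_\rect(\ANNu_{d,\varepsilon}))(x)|) \leq \varepsilon$, which rewrites as $|(g \flowstruct_T f)_d(x) - (\realisation_\rect(\ANNu_{d,\varepsilon}))(x)| \leq \varepsilon(1 + \norm{x}^{\kappa})$; moreover, since the global Lipschitz bound yields $\sup_{x}\frac{\norm{f_d(x)}}{1+\norm{x}} \leq \max\{\mathfrak{c}d^{\alpha},\mathfrak{c}\} < \infty$ and $f_d$ is locally Lipschitz continuous, \cref{Def:flow_operation} applies and $(g \flowstruct_T f)_d = \Flow_{f_d}^T(g_d)$, so \eqref{relu_flow_statement:concl} follows. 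Since the result is essentially a direct corollary of \cref{flow_statement_ReLu}, there is no substantial obstacle; the one step most prone to error — and thus the ``main obstacle'' in that sense — is the rescaling $\rho \mapsto \rho\kappa$, $\beta \mapsto \beta\kappa$ when invoking \cref{flow_statement_ReLu}, which is exactly what turns its $1-\rho$ denominators into the $1-\rho\kappa$ denominators of the present statement.
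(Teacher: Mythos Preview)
Your proposal is correct and follows essentially the same approach as the paper: define $H_{d,\varepsilon}=\varepsilon^{-\rho}d^{\beta}$, use \cref{approximation_space_equivalence} and \cref{approximation_space_equivalenceGH} to place $g$ and $f$ in the appropriate approximation spaces, invoke \cref{flow_statement_ReLu} with the rescaling $\rho\mapsto\rho\kappa$, $\beta\mapsto\beta\kappa$, and translate back via \cref{approximation_space_equivalence}. The only cosmetic difference is that the paper assumes w.l.o.g.\ $c>0$ whereas you pass to $\mathfrak{c}=\max\{c,1\}$; both handle the degenerate case and yield the stated exponents.
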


\begin{proof}%
Throughout this proof assume w.l.o.g.\ that $c > 0$, 
let
$\mathfrak{w}\colon[0, \infty) \to [0, \infty)$ satisfy for all $v \in [0, \infty)$ that 
$
  \mathfrak{w}(v) = (1 + v^\kappa)^{-1}
$, and 
let 
$H = (H_{d, \varepsilon})_{(d, \varepsilon) \in \N \times (0,1]} \subseteq (0, \infty]$ 
satisfy for all 
	$d \in \N$, $\varepsilon \in (0, 1]$
that
$
    H_{d, \varepsilon} = \varepsilon^{-\rho}d^\beta.
$
Observe that
\begin{equation}
\label{relu_flow_statement:3}
\begin{split} 
	 \sup_{d \in \N} \sup_{ \varepsilon \in (0,1]}
	 \frac{
	 |H_{d, \varepsilon}|^\kappa
	 }{ \varepsilon^{-\rho\kappa}
	 d^{\beta\kappa}
	 } 
	= 1 < \infty.
\end{split}
\end{equation}
Next note that \eqref{relu_flow_statement:ass4a}, \eqref{relu_flow_statement:ass4}, and \cref{approximation_space_equivalence}
assure that
\begin{equation}
\label{relu_flow_statement:4}
    \left(\N \ni d \mapsto (g_d) \in C(\R^d, \R) \right)
    \in \ASrates{\rect, \mathfrak{w}}{r_0, r_1}(\N, \id_\N)
\end{equation}
\cfload.
Furthermore, note that \eqref{relu_flow_statement:ass4a}, \eqref{relu_flow_statement:ass4}, \eqref{relu_flow_statement:ass5}, and \cref{approximation_space_equivalenceGH} imply that
\begin{equation}
\label{relu_flow_statement:5}
    \left(\N \ni d \mapsto (f_d) \in C(\R^d, \R^d) \right)
    \in \ASGH_{\rect,\mathfrak{w}, c, H}^{r_0, r_1}(\N, \id_\N).
\end{equation}
Combining 
	this, 
	\eqref{relu_flow_statement:ass1}, 
	\eqref{relu_flow_statement:3}, and 
	\eqref{relu_flow_statement:4} 
with 
\cref{flow_statement_ReLu} 
implies that
\begin{equation}
 \label{relu_flow_statement:6}
    \big(\Flow_{f_d}^T (g_d)\big)_{d \in \N} = (g \flowstruct_T f)
\in 
	\ASrates{\rect, \mathfrak{w}}{1 + \frac{2r_0 (\kappa + 2)}{1 - \rho\kappa},  8 + \alpha + 2r_1 + \frac{2r_0 (\alpha(\kappa+1)+\beta \kappa)}{1-\rho\kappa}}(\N,\id_\N)
\end{equation}
\cfload.
This and \cref{approximation_space_equivalence} establish \eqref{relu_flow_statement:concl}.
The proof of \cref{relu_flow_statement} is thus complete.
\end{proof}

\cfclear
\begin{cor}[Approximations of first order PDEs with ReLU ANNs]
\label{relu_flow_statement_max}
Let $c, T \in [0,\infty)$, 
$\kappa \in [1, \infty)$, $\rho \in [0,\nicefrac{1}{\kappa})$, 
for every $d \in \N$ let $f_{d} \in C^1(\R^d , \R^d)$, $g_{d} \in C^1( \R^d , \R)$ satisfy for all $x,y \in \R^d$ that $\norm{f_{d}(0)} \leq cd^c$ and
\begin{equation}
\label{relu_flow_statement_max:ass1}
    \max\{\norm{ f_{d}(x) - f_{d}(y) }, |g_{d}(x) - g_{d}(y)|\}
    \leq c \norm{ x - y },
\end{equation}
and let $(\ANNf_{ d, \varepsilon})_{( d, \varepsilon) \in \N \times (0, 1]} \subseteq \ANNs$, $(\ANNg_{ d, \varepsilon})_{( d, \varepsilon) \in \N \times (0, 1]} \subseteq \ANNs$ satisfy for all $d \in \N$, $\varepsilon \in (0, 1]$, $x \in \R^d$ that
\begin{equation}
\label{relu_flow_statement_max:ass4a}
    \max\{\param(\ANNf_{d, \varepsilon}), \param(\ANNg_{d, \varepsilon}) \} \leq c \varepsilon^{-c} d^{c}, \qquad
    \realisation_\rect(\ANNf_{d, \varepsilon}) \in C(\R^d, \R^d ), \qquad \realisation_\rect(\ANNg_{d, \varepsilon}) \in C(\R^d, \R ),
\end{equation}
\begin{equation}
    \label{relu_flow_statement_max:ass4}
    \max \left\{\norm{ f_{d}(x) - \big(\realisation_\rect(\ANNf_{d, \varepsilon})\big)(x)},
    |g_{d}(x) - \big(\realisation_\rect(\ANNg_{d, \varepsilon})\big)(x)| \right\} \leq \varepsilon \left( 1 + \norm{x}^\kappa \right),
\end{equation}
\begin{equation}
\label{relu_flow_statement_max:ass5}
    \andq
    \norm{ \big( \realisation_\rect(\ANNf_{d, \varepsilon}) \big) (x) }
    \leq    c(\varepsilon^{-\rho}d^c + \norm{x})
\end{equation}
\cfload.
Then
\begin{enumerate}[(i)]
\item \label{relu_flow_statement_max:concl1}
there exist unique $u_d \in C^1([0, T] \times \R^d, \R)$, $d \in \N$, such that for all 
    $d \in \N$,
	$t \in [0, T]$,
	$x \in \R^d$
it holds that 
$u_d(0,x) = g_d(x)$ and
\begin{equation}
 \tfrac{\partial  u_d}{\partial t}  (t,x)  
=
  \tfrac{\partial u_d}{\partial x} (t,x)\, f_d(x)
\end{equation}
and
\item \label{relu_flow_statement_max:concl2}
there exist $K \in [0, \infty)$, $\ANNu = (\ANNu_{d,\varepsilon})_{(d, \varepsilon) \in \N \times (0,1]} \subseteq \ANNs$ such that for all $d \in \N$, $\varepsilon \in (0, 1]$ it holds that $\realisation_\rect(\ANNu_{d,\varepsilon}) \in C(\R^d, \R)$, $\paramANN(\ANNu_{d,\varepsilon}) \leq K \varepsilon^{-K}  d^{K}$, and
\begin{equation}
    \sup_{x \in \R^d} \frac{
    |u_d(T,x) - \big(\realisation_\rect(\ANNu_{d,\varepsilon})\big)(x)|}
    {1 +\norm{x}^\kappa}
    \leq \varepsilon.
\end{equation}
\end{enumerate}
\end{cor}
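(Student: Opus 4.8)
The plan is to obtain both assertions by specializing results established earlier in the paper. For item~\eqref{relu_flow_statement_max:concl1} I would first observe that \eqref{relu_flow_statement_max:ass1} together with the bound $\norm{f_d(0)} \leq cd^c$ implies for every $d \in \N$, $x \in \R^d$ that $\norm{f_d(x)} \leq \norm{f_d(0)} + c\norm{x} \leq cd^c + c\norm{x}$, and hence that $\sup_{x \in \R^d} \frac{\norm{f_d(x)}}{1 + \norm{x}} < \infty$. Combining this with the hypotheses $f_d \in C^1(\R^d, \R^d)$ and $g_d \in C^1(\R^d, \R)$ permits, for each $d \in \N$ in the case $T > 0$, an application of \cref{first_order_FC}, which yields the unique $u_d \in C^1([0,T] \times \R^d, \R)$ with $u_d(0, \cdot) = g_d$ solving the stated transport PDE; the degenerate case $T = 0$ is immediate. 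This establishes item~\eqref{relu_flow_statement_max:concl1}. Moreover, item~\eqref{first_order_FC:item3} in \cref{first_order_FC} together with \cref{Def:flow_operator} shows that $u_d(T, \cdot) = \Flow^T_{f_d}(g_d)$ for every $d \in \N$.

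For item~\eqref{relu_flow_statement_max:concl2} I would invoke \cref{relu_flow_statement} with the parameter choices $r_0 \notationarrow c$, $r_1 \notationarrow c$, $\alpha \notationarrow c$, $\beta \notationarrow c$, and with $c$, $T$, $\kappa$, $\rho$ unchanged (note $\rho \in [0,\nicefrac{1}{\kappa})$, so $1 - \rho\kappa > 0$ and the resulting exponents are well defined). Indeed, \eqref{relu_flow_statement_max:ass4a} supplies the parameter bound of \eqref{relu_flow_statement:ass4a} with $r_0 = r_1 = c$, the assumption $\norm{f_d(0)} \leq cd^c$ supplies the required growth with $\alpha = c$, \eqref{relu_flow_statement_max:ass4} is exactly \eqref{relu_flow_statement:ass4}, and \eqref{relu_flow_statement_max:ass5} is \eqref{relu_flow_statement:ass5} with $\beta = c$. \cref{relu_flow_statement} then produces $\mathbf{K} \in [0, \infty)$ and $\ANNu = (\ANNu_{d,\varepsilon})_{(d,\varepsilon) \in \N \times (0,1]} \subseteq \ANNs$ such that for all $d \in \N$, $\varepsilon \in (0,1]$, $x \in \R^d$ it holds that $\realisation_\rect(\ANNu_{d,\varepsilon}) \in C(\R^d, \R)$, that $\paramANN(\ANNu_{d,\varepsilon}) \leq \mathbf{K}\, \varepsilon^{-1 - \frac{2c(\kappa+2)}{1-\rho\kappa}} d^{8 + 3c + \frac{2c(c(\kappa+1)+c\kappa)}{1-\rho\kappa}}$, and that $|\Flow^T_{f_d}(g_d)(x) - (\realisation_\rect(\ANNu_{d,\varepsilon}))(x)| \leq \varepsilon(1 + \norm{x}^\kappa)$.

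Finally I would fix $K \in [0,\infty)$ with $K \geq \mathbf{K}$, $K \geq 1 + \frac{2c(\kappa+2)}{1-\rho\kappa}$, and $K \geq 8 + 3c + \frac{2c(c(\kappa+1)+c\kappa)}{1-\rho\kappa}$; since $\varepsilon \in (0,1]$ and $d \geq 1$ this yields $\paramANN(\ANNu_{d,\varepsilon}) \leq K\varepsilon^{-K} d^K$. Dividing the approximation estimate from \cref{relu_flow_statement} by $1 + \norm{x}^\kappa > 0$ and using the identity $u_d(T, \cdot) = \Flow^T_{f_d}(g_d)$ obtained above then gives $\sup_{x \in \R^d} \frac{|u_d(T,x) - (\realisation_\rect(\ANNu_{d,\varepsilon}))(x)|}{1 + \norm{x}^\kappa} \leq \varepsilon$, completing item~\eqref{relu_flow_statement_max:concl2}. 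There is no substantial obstacle in this argument: the only points requiring attention are the elementary linear-growth bound on $f_d$ that unlocks \cref{first_order_FC} and \cref{Def:flow_operator}, the translation $u_d(T,\cdot) = \Flow^T_{f_d}(g_d)$, and the bookkeeping of the exponents coming out of \cref{relu_flow_statement} so that they can all be absorbed into the single constant $K$.
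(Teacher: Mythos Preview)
Your proposal is correct and follows essentially the same route as the paper: invoke \cref{first_order_FC} for item~\eqref{relu_flow_statement_max:concl1} and the identification $u_d(T,\cdot)=\Flow^T_{f_d}(g_d)$, then apply \cref{relu_flow_statement} with $r_0=r_1=\alpha=\beta=c$ and absorb the resulting exponents into a single $K$. If anything, you are a bit more explicit than the paper (you spell out the linear-growth bound needed for \cref{first_order_FC} and the degenerate case $T=0$, and you record the parameter substitutions and the final choice of $K$), but the underlying argument is the same.
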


\begin{proof}%
\cref{relu_flow_statement_max} is a consequence of \cref{first_order_FC} and  \cref{relu_flow_statement}.
See \cite[Corollary 4.11]{Beneventano2020v1} for more details.
\end{proof}

\section*{Acknowledgments}
This project has been partially funded by the SNSF-Research project 200020{\_}175699 ``Higher order numerical approximation methods for stochastic partial differential equations''.
This work has also been partially funded by the National Science Foundation of China (NSFC) under grant number 12250610192.
Moreover, we gratefully acknowledge the Cluster of Excellence EXC2044-390685587, Mathematics M\"unster: Dynamics-Geometry-Structure funded by the Deutsche Forschungsgemeinschaft (DFG, German Research Foundation).

\bibliographystyle{acm}
\bibliography{../1_Main_bibfile/Main_bibfile.bib}

\end{document}